 \documentclass[final,3p,times]{elsarticle}
\usepackage[T1]{fontenc}




\usepackage{amssymb}
\usepackage{amsmath}
\usepackage{amsthm}
\usepackage[scientific-notation=true]{siunitx}
\usepackage{booktabs}
\usepackage{algorithm,algorithmicx,algpseudocode}

\usepackage{mathtools}
\mathtoolsset{showonlyrefs}

\newcommand{\Dp}{\mathrm{D}_p}
\newcommand{\Dv}{\mathrm{D}_v}
\newcommand{\mL}{\mathcal{L}}
\newcommand{\mO}{\mathcal{O}}
\newcommand{\mS}{\mathcal{S}}
\newcommand{\tl}{\tilde{l}}
\newcommand{\rd}{\mathrm{d}}
\newcommand{\pt}{\partial_t}

\newcommand{\nT}{\nabla_T}
\newcommand{\nx}{\nabla_x}
\newcommand{\nz}{\nabla_z}
\newcommand{\ve}{\varepsilon}
\newcommand{\EE}{\mathbb{E}}
\newcommand{\RR}{\mathbb{R}}

\newcommand{\tp}{^{\top}}

\newcommand{\parentheses}[1]{\left(#1\right)}
\newcommand{\sqbra}[1]{\left[#1\right]}

\newcommand{\abs}[1]{\left|#1\right|}

\newcommand{\fd}[2]{\dfrac{\delta #1}{\delta #2}}
\newcommand{\pd}[2]{\dfrac{\partial #1}{\partial #2}}

\DeclareMathOperator{\Tr}{Tr}

\newtheorem{prop}{Proposition}

\newtheorem{coro}{Corollary}


\newenvironment{manualproposition}[1]{%
  \manualpropositioninner
}{\endmanualpropositioninner}

\newenvironment{manualcorollary}[1]{%
  \manualcorollaryinner
}{\endmanualcorollaryinner}

\newcommand{\pluseq}{\mathrel{+}=}

\begin{document}

\begin{frontmatter}



\title{Score-based Neural Ordinary Differential Equations for Computing Mean Field Control Problems}


\author[labelUCLA]{Mo Zhou} 
\author[labelUCLA]{Stanley Osher}
\author[labelUSC]{Wuchen Li}

\affiliation[labelUCLA]{organization={Department of Mathematics},
            addressline={University of California}, 
            city={Los Angeles},
            postcode={90095}, 
            state={CA},
            country={USA}}

\affiliation[labelUSC]{organization={Department of Mathematics},
            addressline={University of South Carolina},
            city={Columbia},
            postcode={29208},
            state={SC},
            country={USA}}

\begin{abstract}
Classical neural ordinary differential equations (ODEs) are powerful tools for approximating the log-density functions in high-dimensional spaces along trajectories, where neural networks parameterize the velocity fields. We specify a system of neural differential equations representing first- and second-order score functions along trajectories based on deep neural networks. We reformulate the mean field control (MFC) problem with individual noises into an unconstrained optimization problem framed by the proposed neural ODE system. Additionally, we introduce a novel regularization term to enforce characteristics of viscous Hamilton--Jacobi--Bellman (HJB) equations to be satisfied based on the evolution of the second-order score function. Examples include regularized Wasserstein proximal operators (RWPOs), probability flow matching of Fokker--Planck (FP) equations, and linear quadratic (LQ) MFC problems, which demonstrate the effectiveness and accuracy of the proposed method.
\end{abstract}



\begin{keyword}
score function, neural ODE, normalizing flow, mean field control


\end{keyword}

\end{frontmatter}



\section{Introduction}\label{sec:intro}
Score functions have been widely used in modern machine learning algorithms, particularly generative models through time-reversible diffusion \citep{song2021scorebased}. The score function can be viewed as a deterministic representation of diffusion in stochastic trajectories \citep{carrillo2019blob}. In this representation, one reformulates the Brownian motion by the gradient of the logarithm of the density function, after which deterministic trajectories involving score functions can approximate the probability density function. These properties have inspired algorithms for simulating stochastic trajectories or sampling problems that converge to target distributions \citep{wang2022optimal,lu2024score}. Typical applications include modeling the time evolution of probability densities for stochastic dynamics and solving control problems constrained by such dynamics.

While score functions provide powerful tools for modeling stochastic trajectories, their computations are often inefficient, especially in high-dimensional spaces. Classical methods, such as kernel density estimation (KDE) \citep{chen2017tutorial}, tend to perform poorly in such settings due to the curse of dimensionality \citep{terrell1992variable}.

Recently, neural ODEs have emerged as efficient ways of estimating densities. In particular, one uses neural networks to parameterize the velocity fields and then approximates the logarithm of density function along trajectories. The time discretizations of neural ODEs can be viewed as normalization flows in generative models. 

Several natural questions arise. {\em Can we approximate the trajectory of score functions by constructing a set of neural ODEs (in continuous time) or normalization flows (in discrete time)? Furthermore, can we efficiently solve stochastic control problems with these neural ODEs or normalization flows? }

In this paper, we propose a formulation for the first- and second-order score functions using two additional neural ODEs involving high-order derivatives of the velocity fields. We also develop a class of high-order normalizing flows in the discrete-time update of these neural ODEs. As an application, we use this high-order normalizing flow to solve the MFC problem with individual noises. The MFC problem generalizes stochastic control problems by incorporating a running cost that depends on the state distribution, thus capturing the interaction between individual agents and the population density. In our reformulation, the MFC problem is recast into an optimal control problem involving state trajectories, densities, and score functions, which can be efficiently approximated using the proposed neural ODE system. Additionally, a regularization term from the Karush-Kuhn-Tucker (KKT) system of the MFC problem is proposed to enhance the optimization. In this regularization, we approximate the viscous HJB equations using the first- and second-order score dynamics. Numerical examples in various MFC problems demonstrate the effectiveness of the proposed optimization method with high-order neural differential equations.

\medskip
\noindent\textbf{Related work.} The normalizing flow has emerged as a powerful technique for solving inference problems, allowing for the construction of complex probability distributions in a tractable manner. This approach was popularized by works such as \cite{rezende2015variational,papamakarios2021normalizing}. The advent of Neural ODEs \citep{chen2018neural} has attracted considerable attention in the community, opening up a new paradigm for continuous-time deep learning models. Following this breakthrough, numerous extensions have been proposed to enhance the expressiveness and flexibility of these models, such as the augmented Neural ODEs in \cite{dupont2019augmented}.

Regarding score-based models, \cite{song2021scorebased} studies the score-based reversible time diffusion models in generative modeling. In this case, the score function often comes from the OU process. One needs to solve the score-matching problem to learn the time-dependent score function. In addition, the first-order score dynamic from neural ODEs has been introduced in \cite{boffi2023probability} to work on the simulation of the Fokker-Planck (FP) equation. Rather than using score dynamics directly to compute the probability flow ODEs associated with FP equations, they employed score-matching methods to achieve efficient computations.

The study of MFC problems has become crucial in the last decade \cite{cardaliaguet2019master,bensoussan2013mean,fornasier2014mean}. MFC studies strategic decision-making in large populations where individual players interact through specific mean-field quantities. This control formulation is also useful in generative models \cite{zhang2023meanfieldgameslaboratorygenerative}. For example, neural network-based methods have been employed to solve MFC problems. \cite{hu2023recent} provided a comprehensive review of neural network approaches for MFC. For first-order MFC problems, \cite{Lars,HUANG2023112155} developed Neural ODE-based methods. In parallel, \cite{Lars} introduced numerical algorithms based on the characteristic lines of the Hamilton-Jacobi-Bellman (HJB) equation. For second-order MFC problems, neural networks have also been employed. \cite{APAC} utilized generative adversarial networks (GANs) to approximate minimization systems for MFC problems, where one neural network models the population density, and the other parameterizes the value function. \cite{reisinger2024fast} proposed a PDE-based iterative algorithm to tackle MFC problems. Along this line, MFC problems are generalizations of dynamical optimal transport problems, known as Benamou-Brenier's formulas \cite{benamou2000computational}. A famous example is the Jordan-Kinderlehrer-Otto (JKO) scheme \citep{JKO}, a variational time discretization in approximating the gradient drift FP equation. The one-step iteration of the JKO scheme can be viewed as an MFC problem. In machine learning computations, the neural JKO scheme \citep{NEURIPS2023_93fce71d,VidalWuFungTenorioOsherNurbekyan2023_tamingc} was introduced to compute the FP equation, and \cite{LEE2024113187} extended this to approximate general nonlinear gradient flows through a generalized deep neural JKO scheme. This scheme approximates a deterministic MFC problem in each time interval. Other works that use machine learning methods to solve MFC problems include \cite{dayanikli2023deep,zhou2024deep}. Different from previous results, our work designs first- and second-order score dynamics to compute second-order MFC problems.

The organization of this paper is as follows. In section \ref{sec:ODE_continuous}, we formulate the neural dynamical system involving first- and second-order score functions, with a specific example based on the Gaussian distribution in terms of linear mapping equations. Section \ref{sec:ODE_discrete} discusses the discrete time evolution of the neural ODE system, which can be viewed as a system of normalization flows for approximating first- and second-order score functions. In section \ref{sec:MFC}, we design an algorithm using a score-based ODE system to solve the second-order MFC problem. Several numerical examples are presented in section \ref{sec:numerical_results} to demonstrate the accuracy and effectiveness of the proposed algorithm.

\section{Score-based neural ODE in continuous time}
\label{sec:ODE_continuous}
In this section, we propose a neural dynamical system that evolves along a trajectory of a random variable. The system outputs the logarithm of the density function, as well as its gradient vector and Hessian matrix. We show that these neural dynamics are crucial for efficiently computing the score functions in high-dimensional density estimation problems, which are key in generative models and MFC problems. 

We first clarify the notations. Denote a variable $x\in\mathbb{R}^d$. 
$\nx$ denotes the gradient or Jacobian matrix of a function w.r.t. the variable $x$. The gradient is always a column vector. $\nx\cdot$ and $\nx^2$ are divergence and Hessian w.r.t. the variable $x$. $\abs{\cdot}$ is the absolute value of a scalar, the $l_2$ norm of a vector, or the Frobenius norm of a matrix. $\Tr(\cdot)$ denotes the trace of a squared matrix. 

Let $z_t \in \RR^d$ be a flow of random variable for $t \ge 0$ with initialization $z_0=x$, which is obtained from a pushforward map $z_t = T(t,x)$ in Lagrangian coordinates. For simplicity, we denote $T_t := T(t,\cdot)$, so that $T_0$ is an identity map. Throughout this paper, we assume that $T(\cdot,\cdot)$ is smooth and $T_t$ is invertible for all $t\ge0$. We denote the probability distribution of $z_t$ by $\rho_t$ or $\rho(t,\cdot)$, then $\rho_t = T_{t \#} \rho_0$, where $_\#$ is the pushforward operator for distributions. By the change of variable formula, $\rho$ satisfies the well known Monge--Amp\`ere (MA) equation \citep{gutierrez2001monge} 
\begin{equation}\label{eq:MongeAmpere}
\rho(t,T(t,x)) \, \det(\nx T(t,x)) = \rho(0,x)\,.
\end{equation}

Let $f(t,\cdot): \RR^d \to \RR^d$ be the vector field of the state dynamic $z_t$ in Eulerian coordinates, given by $f(t,T(t,x)) = \pt T(t,x)$ for all $t$ and $x$. Then, the state $z_t$ satisfies the ODE dynamic $\pt z_t = f(t,z_t)$. The density function hence satisfies the continuity equation (also known as the transport equation):
\begin{equation}\label{eq:continuity}
\pt \rho(t,x) + \nx\cdot(\rho(t,x) f(t,x)) = 0\,.
\end{equation}
We leave the proofs for this equation and all propositions afterwards in \ref{sec:proofs}.

We denote the logarithm of density along the state trajectory $z_t$ by $l_t := \log \rho(t,z_t)$. We also denote the first- and second-order score functions along the state trajectory by $s_t := \nz \log \rho(t,z_t)$ and $H_t := \nz^2 \log \rho(t,z_t)$. These score functions are useful in density estimation problems, particularly for applications such as generative models and MFC problems. However, their efficient computations in high-dimensional spaces are challenging problems. In this work, we propose a system of high-order neural ODEs to compute these score functions, formalized in the following proposition.

\begin{prop}[Neural ODE system]\label{prop:ODE_dynamics} 
Functions $z_t$, $l_t$, $s_t$, and $H_t$ satisfy the following ODE dynamics.
\begin{subequations}\label{eq:joint_ODE}
\begin{align}
\pt z_t &= f(t,z_t)\,, \label{eq:zt_dynamic} \\
\pt l_t &= -\nz\cdot f(t,z_t)\,, \label{eq:lt_dynamic}\\
\pt s_t &= -\nz f(t,z_t)\tp s_t - \nz(\nz\cdot f(t,z_t))\,, \label{eq:st_dynamic}\\
\pt H_t &= -\sum_{i=1}^d s_{it} \nz^2 f_i(t,z_t) - \nz^2(\nz\cdot f(t,z_t)) \label{eq:Ht_dynamic} \\
& \quad - H_t \nz f(t,z_t) -  \nz f(t,z_t) \tp H_t\,, \notag
\end{align}
\end{subequations}
where $s_{it}$ and $f_i$ are the $i$-th component of $s_t$ and $f$ respectively.
\end{prop}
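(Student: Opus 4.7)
The plan is to derive all four dynamics from a single starting point---the logarithmic form of the continuity equation \eqref{eq:continuity}---by applying $\nz^0$, $\nz^1$, and $\nz^2$ to it in Eulerian coordinates, then converting to Lagrangian form using the material derivative $\pt(g(t,z_t)) = (\pt g)(t,z_t) + f(t,z_t)\cdot\nz g(t,z_t)$. Equation \eqref{eq:zt_dynamic} is immediate from the Eulerian definition $f(t,T(t,x)) = \pt T(t,x)$. For \eqref{eq:lt_dynamic}, dividing \eqref{eq:continuity} by $\rho$ produces the Eulerian log-density equation $\pt\log\rho + f\cdot\nz\log\rho + \nz\cdot f = 0$; the material derivative applied to $l_t = \log\rho(t,z_t)$ then cancels the convective piece $f\cdot\nz\log\rho$ and leaves $\pt l_t = -\nz\cdot f(t,z_t)$.

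For \eqref{eq:st_dynamic}, I will apply $\nz$ to the log-density equation. Setting $s = \nz\log\rho$ and $H = \nz s = \nz^2\log\rho$, the product rule gives $\nz(f\cdot s) = (\nz f)\tp s + Hf$, yielding the Eulerian identity $\pt s + (\nz f)\tp s + Hf + \nz(\nz\cdot f) = 0$. Combining with the material derivative $\pt s_t = (\pt s)(t,z_t) + H(t,z_t)\,f(t,z_t)$, the two $Hf$ terms cancel and \eqref{eq:st_dynamic} drops out.

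For \eqref{eq:Ht_dynamic}, I will apply $\nz^2$ to the same log-density equation. A careful double product-rule computation on $\nz^2(f\cdot s)$ yields exactly four contributions: $\sum_i s_i\,\nz^2 f_i$, $(\nz f)\tp H$, $H\,\nz f$, and a convective Hessian term $(f\cdot\nz)H$. The last term relies on symmetry of mixed third-order partials $\partial_j\partial_k\partial_i\log\rho = \partial_i\partial_j\partial_k\log\rho$ to identify $\sum_i f_i\,\partial_j H_{ki}$ with the $(j,k)$-entry of $(f\cdot\nz)H$. The Lagrangian material derivative of $H_t$ then absorbs this convective term, and rearranging the remaining entries produces \eqref{eq:Ht_dynamic}. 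The main obstacle is precisely this Hessian bookkeeping: one must correctly symmetrize using commutativity of mixed third-order partials and keep track of the transposes appearing as $(\nz f)\tp H$ versus $H\,\nz f$ under the Jacobian convention $(\nz f)_{ij} = \partial_j f_i$. Every other step reduces to a routine chain-rule calculation.
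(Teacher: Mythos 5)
Your proposal is correct, and it organizes the computation differently from the paper. The paper works Lagrangian-first with \emph{unnormalized} density derivatives: it computes $\tfrac{\rd}{\rd t}\nz\rho(t,z_t)$ and $\tfrac{\rd}{\rd t}\nz^2\rho(t,z_t)$ along the trajectory, then recovers $\pt s_t$ and $\pt H_t$ via the quotient rule and the identity $\nz^2\log\rho = \nz^2\rho/\rho - s\,s\tp$; this forces a long bookkeeping step in which the $s_t s_t\tp$ and $(\nz\cdot f)$ contributions must all cancel. You instead work Eulerian-first with $\log\rho$ from the outset: dividing the continuity equation by $\rho$ gives the transport-type PDE $\pt\log\rho + f\tp s + \nz\cdot f = 0$, and applying $\nz$ and $\nz^2$ to it yields PDEs for $s$ and $H$ whose convective terms ($Hf$ and $(f\cdot\nz)H$) are exactly absorbed by the material derivative. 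Your identification of the four contributions to $\nz^2(f\tp s)$ is right --- the cross terms $\sum_i(\partial_j f_i)H_{ki}$ and $\sum_i(\partial_k f_i)H_{ji}$ assemble into $H\nz f + (\nz f)\tp H$ by symmetry of $H$, and the symmetry of third-order partials converts $\sum_i f_i\partial_k H_{ji}$ into $((f\cdot\nz)H)_{jk}$ as you claim. What your route buys is the complete avoidance of the $ss\tp$ cancellations; what the paper's route buys is that it only ever differentiates the continuity equation for $\rho$ itself, never needing to justify the Eulerian PDE for the score as an intermediate object. Both are complete proofs of the same statement.
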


One corollary of this proposition is that, if we denote the density along the trajectory by $\tl_t := \rho(t,z_t)$, then $\tl_t$ satisfies the following ODE:
\begin{equation}\label{eq:Lt_dynamic}
\pt \tl_t = -\nz\cdot f(t,z_t) \, \tl_t\,.
\end{equation}

We note that the first and second order score functions satisfy the following information equality.
\begin{prop}[Information equality]\label{prop:scores}
The following equality holds for all $t \ge 0$,
$$\EE\sqbra{\Tr(H_t)} = - \EE\sqbra{|s_t|^2}\,.$$
\end{prop}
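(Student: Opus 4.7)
The plan is to express both sides as integrals against the density $\rho_t$ and then relate them via integration by parts. Since $z_t \sim \rho_t$, we can write
\begin{equation}
\EE[\Tr(H_t)] = \int_{\RR^d} \Tr\!\parentheses{\nx^2 \log \rho(t,x)} \rho(t,x)\,\rd x = \int_{\RR^d} \Delta_x \log \rho(t,x)\,\rho(t,x)\,\rd x,
\end{equation}
and similarly $\EE[|s_t|^2] = \int_{\RR^d} |\nx \log \rho(t,x)|^2 \rho(t,x)\,\rd x$. The goal is then to show that these two integrals sum to zero.

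The key identity I would use is $\rho \,\nx \log \rho = \nx \rho$, which immediately gives
\begin{equation}
\nx \cdot (\rho \,\nx \log \rho) = \Delta_x \rho.
\end{equation}
Applying the product rule to the left-hand side yields
\begin{equation}
\nx \cdot (\rho \,\nx \log \rho) = \nx \rho \cdot \nx \log \rho + \rho \,\Delta_x \log \rho = \rho\,|\nx \log \rho|^2 + \rho\,\Delta_x \log \rho.
\end{equation}
Integrating both expressions over $\RR^d$ and using the decay of $\rho$ and $\nx \rho$ at infinity (so that $\int \Delta_x \rho \,\rd x = 0$ by the divergence theorem), one obtains
\begin{equation}
\int_{\RR^d} \rho\,|\nx \log \rho|^2\,\rd x + \int_{\RR^d} \rho\,\Delta_x \log \rho\,\rd x = 0,
\end{equation}
which is exactly $\EE[|s_t|^2] + \EE[\Tr(H_t)] = 0$.

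The only subtle point is justifying the vanishing boundary term, i.e.\ $\int \Delta_x \rho\,\rd x = 0$. This requires mild integrability/decay assumptions on $\rho_t$ and $\nx \rho_t$, which are implicit in the smoothness assumptions on $T$ already made in the paper. No dynamical information about the ODE system is needed; the identity is purely a statement about $\rho_t$ at a fixed time $t$, so Proposition~\ref{prop:ODE_dynamics} plays no role here. This makes the result essentially a one-line integration-by-parts argument once the score functions are rewritten in their Eulerian forms.
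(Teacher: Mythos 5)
Your argument is correct and is essentially the same as the paper's: both reduce $\EE[\Tr(H_t)+|s_t|^2]$ to an integral over $\RR^d$ against $\rho_t$, rewrite the score in the Eulerian form $\nx\log\rho = \nx\rho/\rho$, and kill the result by a single integration by parts (equivalently, the divergence theorem applied to $\int \Delta_x\rho\,\rd x = 0$). Your observation that $\nx\cdot(\rho\,\nx\log\rho)=\Delta_x\rho$ is just a slightly more compact way of organizing the same cancellation the paper exhibits term by term, and you correctly note that the dynamics play no role — the identity is pointwise in $t$.
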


\textbf{Example: Centered Gaussian distributions.}
To illustrate the ODE dynamics \eqref{eq:joint_ODE}, we consider a concrete example where the random variable follows a centered Gaussian distribution. Let the pushforward map be linear in $x$, i.e. $T(t,x) = T(t)x$, where $T(t): \RR_+ \to \RR^{d\times d}$ is a time-dependent matrix. If we define $A(t): \RR_+ \to \RR^{d\times d}$ by $\pt T(t) = A(t)T(t)$, then the vector field of the state dynamics is $f(t,x) = A(t) x$. Now, let the initial state $z_0$ follow a centered Gaussian distribution $N(0,\Sigma(0))$ with covariance matrix $\Sigma(0)$. Under this linear map $T(t)$, the state $z_t$ remains a Gaussian distribution, with a time-dependent covariance matrix $\Sigma(t)$. In this case, $z_t \sim N(0,\Sigma(t))$, and the time evolution of the covariance matrix follows the matrix ODE (with the derivation in \ref{sec:proof_normalizingflow})
\begin{equation}\label{eq:Sigmat_dynamic}
\pt \Sigma(t) = A(t)\Sigma(t) + \Sigma(t)A(t)\tp\,.
\end{equation}
In this case, $\nx f(t,x) = A(t)$ and $\nx\cdot f(t,x) = \Tr(A(t))$, and all higher-order spatial derivatives of $f$ vanish. Therefore, the ODE system \eqref{eq:joint_ODE} simplifies to
\begin{align}
\pt z_t &= A(t) z_t\,,  \quad\quad\quad \pt l_t = -\Tr(A(t))\,, \\
\pt s_t &= -A(t)\tp s_t\,, \quad\quad \pt H_t = -H_t A(t) - A(t)\tp H_t\,.
\end{align}

\section{Score-based normalization flows}
\label{sec:ODE_discrete}
In this section, we introduce the time discretization of the proposed neural ODE system. This discretized system can be interpreted as a normalization flow. This means that we can efficiently estimate the first- and second-order score functions using a deep neural network function. 

\subsection{A generalization of normalizing flow}
We recall that \cite{chen2018neural} interpreted the deep residual neural network as an ODE, with each layer representing one step of the forward Euler scheme. Similar ideas have also been proposed in \cite{haber2017stable}. In this section, we extend these concepts and demonstrate how the first- and second-order score functions can be computed efficiently using the proposed neural ODE system.

We partition the time interval $[0,t_{\text{end}}]$ into $N_t$ subintervals with the length $\Delta t = t_{\text{end}}/ N_t$ and denote the time stamps by $t_j = j \, \Delta t$. We apply the forward Euler scheme to discretize the ODE system \eqref{eq:joint_ODE}. We parametrize the vector field $f(t,z;\theta)$ as a neural network with parameter $\theta$. Here, $\theta = [w_0,w_1,w_2,b_1,b_2] \in \RR^{k+k\times d+d\times k+k+d}$ and
\begin{equation}\label{eq:f_NN}
f(t,z;\theta) = w_2 \, \sigma(w_0t + w_1z + b_1) + b_2\,,
\end{equation}
where $\sigma\colon \mathbb{R}^k\rightarrow\mathbb{R}^k$ is a vector function with elementwise activation functions, and $k$ is the width of the network. This is the typical structure of a neural network with one hidden layer. Then, we discretize the state dynamic $z_t$ through 
\begin{equation}\label{eq:zt_discrete}
z_{t_{j+1}} = z_{t_j} + \Delta t \, f(t_j, z_{t_j}; \theta)\,.
\end{equation}
Similarly, numerical simulations for functions $l_t$, $\tl_t$, $s_t$, and $H_t$ are given by
\begin{align}
l_{t_{j+1}} &= l_{t_j} - \Delta t \, \nz\cdot f(t_j, z_{t_j}; \theta)\,, \label{eq:lt_discrete}\\
\tl_{t_{j+1}} &= \tl_{t_j} - \Delta t \, \nz\cdot f(t_j, z_{t_j}; \theta) \, \tl_{t_j}\,, \label{eq:Lt_discrete}\\
s_{t_{j+1}} &= s_{t_j} - \Delta t \parentheses{ \nz f(t_j, z_{t_j}; \theta)\tp s_{t_j} + \nz( \nz\cdot f(t_j, z_{t_j}; \theta)) }\,, \label{eq:st_discrete} \\
H_{t_{j+1}} &= H_{t_j} - \Delta t \left( \sum_{i=1}^d s_{it_j} \nz^2 f_i(t_j,z_{t_j};\theta) + \nz^2(\nz\cdot f(t_j,z_{t_j};\theta))  \right. \label{eq:Ht_discrete}\\
& \hspace{1in} + H_{t_j} \nz f(t_j,z_{t_j};\theta) + \nz f(t_j,z_{t_j};\theta) \tp H_{t_j}  \bigg)\,,
\end{align}
where the derivatives of $f$ are obtained from auto-differentiations of the neural network. 

In this way, the map from $z_0$, $l_0$, $\tl_0$, $s_0$, $H_0$ to $z_{t_j}$, $l_{t_j}$, $\tl_{t_j}$, $s_{t_j}$, $H_{t_j}$, for any $j \ge 2$, can be viewed as deep residual neural networks, with each layer given by a forward Euler time step. For example, 
\begin{equation}\label{eq:deepNN}
\begin{aligned}
z_{t_{N_t}} &= \parentheses{id + \Delta t \, f(t_{N_t-1},\cdot;\theta)} \circ \cdots \circ \parentheses{id + \Delta t \, f(t_0,\cdot;\theta)} (z_0) \,, \\
l_{t_{N_t}} &= \parentheses{id - \Delta t \, \nz\cdot f(t_{N_t-1},z_{t_{N_t-1}};\theta)} \circ \cdots \circ \parentheses{id  - \Delta t \, \nz\cdot f(t_0,z_0;\theta)} (l_0) \,,\\
s_{t_{N_t}}&= \parentheses{id - \Delta t \parentheses{ \nz f(t_{N_{t-1}}, z_{N_{t-1}}; \theta)\tp \cdot + \nz( \nz\cdot f(t_{N_{t-1}}, z_{N_{t-1}}; \theta)) }} \circ \cdots \circ\\
&\qquad \parentheses{id - \Delta t \parentheses{ \nz f(t_0, z_0; \theta)\tp \cdot + \nz( \nz\cdot f(t_0, z_0; \theta)) }}(s_0)\,,
\end{aligned}
\end{equation}
where $id$ is the identity mapping function and $\circ$ represents compositions of functions. 

\subsection{Different algorithms for computing the score function}
The expression \eqref{eq:st_dynamic} or \eqref{eq:st_discrete} provides a way to compute the score function. In this section, we compare several algorithms for computing the score function, and demonstrate the potential efficiency of using formulas \eqref{eq:st_dynamic} or \eqref{eq:st_discrete}.

One method is to derive the score function from the \textbf{pushforward map} $T(t,x)$ and the MA equation directly. To be more specific, we apply the operator $\nx$ and the logarithm on both sides of the MA equation \eqref{eq:MongeAmpere}. We obtain 
\begin{equation}\label{eq:nxlog_MA}
\nx T(t,x)\tp \nT\log\rho(t,T(t,x)) + \nx \log \parentheses{\det(\nx T(t,x))} = \nx\log\rho(0,x) \,.
\end{equation}
Recall that $s_t = \nT \log\rho(t,T(t,x))$, so the score function can be computed through
\begin{equation}\label{eq:score_MA}
s_t = \nx T(t,x)^{-\top} \sqbra{\nx\log\rho(0,x) - \nx\log\parentheses{\det(\nx T(t,x))} } \,.
\end{equation}
This formulation requires computing the inverse of Jacobian or solving related linear systems, resulting in a cost of $\mO(d^3)$ for a single $s_t$. As a consequence, the total cost for computing one score trajectory $\{s_{t_j}\}_{j=0}^{N_t}$ is $\mO(N_t\, d^3)$. 

As an alternative, we can compute the score $s_t$ through the \textbf{normalizing flow} $l_t$, i.e., differentiating $l_t = \log \rho(t,z_t)$ w.r.t $z_t$. We assume that the width of the neural network \eqref{eq:f_NN} is $k=\mO(d)$. The score function can be reformulated as
\begin{equation}\label{eq:score_dldz}
s_t = \nT \log\rho(t,T(t,x)) = \nx T(t,x)^{-\top} \nx \log\rho(t,T(t,x)) = \parentheses{\dfrac{\partial z_t}{\partial z_0}}^{-\top} \dfrac{\partial l_t}{\partial z_0} \,,
\end{equation}
where $\dfrac{\partial z_t}{\partial z_0}$ and $\dfrac{\partial l_t}{\partial z_0}$ are obtained from auto-differentiations of deep neural network functions in \eqref{eq:deepNN}. Using the chain rule, the total computational computational cost for computing one trajectory of the score function is still $\mO(N_t\, d^3)$. Both \eqref{eq:score_MA} and \eqref{eq:score_dldz} require computing the inverse Jacobian of $T$ or solving related linear systems, resulting in a cubic cost in the spatial dimension. As mentioned in \cite{chen2018neural} (section 4), this is the bottleneck of these methods. We leave more detailed discussions of both methods \eqref{eq:score_MA} and \eqref{eq:score_dldz}, and two other methods, to \ref{sec:remarks}.

In contrast, the \textbf{high-order normalizing flow} \eqref{eq:st_discrete} only requires discretizing the ODE dynamics \eqref{eq:zt_dynamic} and \eqref{eq:st_dynamic}, with a total cost of $\mO(N_t\, d^2)$ for computing one score trajectory $\{s_{t_j}\}_{j=0}^{N_t}$. As is shown in table \ref{tab:compare}, our formulation is more efficient compared with \eqref{eq:score_MA} and \eqref{eq:score_dldz}, especially in high dimensions. Additionally, we are able to compute the second-order score function through \ref{eq:Ht_discrete}.

\begin{table}[hbt]
\centering
\begin{tabular}{c|c}
\midrule
pushforward map \eqref{eq:score_MA} & $\mO(N_t\, d^3)$\\
\hline
normalizing flow \eqref{eq:score_dldz} & $\mO(N_t\, d^3)$\\
\hline
high-order normalizing flow \eqref{eq:st_discrete} (ours) & $\mO(N_t\, d^2)$\\
\midrule
\end{tabular}
\caption{Complexity for different algorithms to compute a score trajectory.}
\label{tab:compare}
\end{table}


\textbf{Example: Gaussian distribution.}
Our formulation for the second-order score function also has advantages in the example of Gaussian distributions, where $H_t = \Sigma(t)^{-1}$ is exactly the inverse of the covariance matrix. Traditionally, estimating $\Sigma(t)^{-1}$ involves several steps. One needs to sample sufficiently many points $z_0^{(n)}$, simulate the state dynamic to obtain $z_t^{(n)}$, estimate the covariance matrix from these samples, and finally compute the inverse of this estimation. To achieve an error of $\ve$, at least $\mO(\ve^{-2})$ samples are required for accurate covariance estimation. Compared to our second-order score dynamic \eqref{eq:Ht_dynamic}, we only need to pick $\Delta t = \mO(\ve)$ and compute $H_t$ through \eqref{eq:Ht_discrete}, with a total cost of $\mO(\ve^{-1})$. This results in a more efficient algorithm for estimating score functions in time-dependent Gaussian distributions.

\section{Solving second-order MFC problems}
\label{sec:MFC}
In this section, we apply the score-based normalizing flow to solve the second-order MFC problem. In particular, we demonstrate that the MFC problem can be represented by the optimal control problem of the neural dynamical system in Proposition \ref{prop:ODE_dynamics}. For readers interested in the connection between MFC problems and generative AI, we refer to the work of \cite{zhang2023meanfieldgameslaboratorygenerative}. In the context of machine learning, \cite{neklyudov2023computational} also studies similar problems, in which they compute Wasserstein Lagrangian flows. In this work, we can deal with the diffusion term in MFC problems using proposed neural normalization flows.

\subsection{Formulation of MFC problem}\label{sec:MFC_formulation}
We consider the following MFC problem
\begin{equation}\label{eq:MFC}
\begin{aligned}
\inf_v \int_0^{t_{\text{end}}} &\int_{\RR^d} \sqbra{L(t,x,v(t,x)) + F(t,x,\rho(t,x))} \rho(t,x) \,\rd x \,\rd t \\
+ &\int_{\RR^d} G(x,\rho(t_{\text{end}},x)) \rho(t_{\text{end}},x) \,\rd x\,,
\end{aligned}
\end{equation}
where the density $\rho(t,x)$ satisfies the FP equation with a given initialization
\begin{equation}\label{eq:FokkerPlanck}
\pt \rho(t,x) + \nabla_x\cdot(\rho(t,x)v(t,x))=\gamma \Delta_x \rho(t,x)\,,\quad \rho(0,x)=\rho_0(x)\,.
\end{equation}
The corresponding stochastic dynamic is
\begin{equation}\label{eq:xt_stochastic}
\rd X_t = v(t,X_t) \,\rd t + \sqrt{2\gamma} \,\rd W_t\,, \quad \quad X_0 \sim \rho_0\,.
\end{equation}
Here $L: \RR \times \RR^d \times \RR^d \to \RR$ is the running cost function, and we assume that it is strongly convex in $v$. $F: \RR \times \RR^d \times \RR \to \RR$ is the cost that involves the density, which distinguishes the MFC problem from the optimal control. $G: \RR^d \times \RR \to \RR$ is the terminal cost, which may also involve the density.

Given the density function $\rho(\cdot,\cdot)$, we define the composed velocity field $f$ as
\begin{equation}\label{eq:composed_velocity}
f(t,x) = v(t,x) - \gamma \nx \log \rho(t,x)\,.
\end{equation}
Let $z_0 = x_0$, under the vector field $f$, we obtain the probability flow \citep{chen2024probability} of the stochastic dynamic $x_t$, given by $\pt z_t = f(t,z_t)$, which coincides with \eqref{eq:zt_dynamic}. This deterministic dynamic characterizes the probability distribution $\rho(t,x)$. If $z_0 \sim \rho_0$, then the probability distribution for $z_t$ is exactly $\rho(t,\cdot)$. With this transformation, the velocity $v(t,x)$ becomes $f(t,x) +\gamma \nx\log\rho(t,x)$, and the MFC problem follows 
\begin{equation}\label{eq:MFC_modified}
\begin{aligned}
\inf_f &\int_0^{t_{\text{end}}} \int_{\RR^d} \sqbra{L\parentheses{t,x,f(t,x) + \gamma \nx \log \rho(t,x)} + F(t,x,\rho(t,x))} \rho(t,x) \,\rd x \,\rd t \\
& + \int_{\RR^d} G(x,\rho(t_{\text{end}},x)) \rho(t_{\text{end}},x) \,\rd x\,,
\end{aligned}
\end{equation}
subject to the transport equation
\begin{equation}\label{eq:MFC_continuity}
\pt \rho(t,x) + \nabla_x\cdot(\rho(t,x)f(t,x))=0\,,\quad \rho(0,x)=\rho_0(x)\,.
\end{equation}

\subsection{Modified HJB equation for MFC}
In this section, we present a system of two PDEs that characterize the optimal solution for the MFC problem. Different from the traditional FP-HJB pair (see \cite[Chapter 4]{bensoussan2013mean}), our system consists of a transport equation obtained from \eqref{eq:MFC_continuity}, and a modified HJB equation, tailored for the modified MFC problem \eqref{eq:MFC_modified}. This characterization could serve as a regularizer to enhance the loss function in numerical algorithms. We define the Hamiltonian $H: \RR \times \RR^d \times \RR^d \to \RR$ by
\begin{equation}
H(t,x,p) = \sup_{v \in \RR^d} \parentheses{v\tp p - L(t,x,v)}\,.
\end{equation}
This definition aligns with classical control theory and is closely related to the maximum principle \citep{zhou2023policy}. The solution of the MFC problem is summarized by the following proposition.

\begin{prop}\label{prop:HJB}
Let $L$ be strongly convex in $v$, then the solution to the MFC problem \eqref{eq:MFC_modified} is as follows. Consider a function $\psi\colon [0, t_{\textrm{end}}]\times\mathbb{R}^d\rightarrow\mathbb{R}$, such that 
\begin{equation}\label{eq:optimal_velocity}
f(t,x) = \Dp H(t,x,\nx \psi(t,x) + \gamma \nx \log\rho(t,x)) - \gamma \nx\log\rho(t,x)\,,
\end{equation}
where the density function $\rho(t,x)$ and $\psi\colon [0,t_{\text{end}}]\times \mathbb{R}^d\rightarrow\mathbb{R}$ satisfy the following system of equations
\begin{equation}\label{eq:FP_HJB}
\left\{\begin{aligned}
&\pt\rho(t,x) + \nabla_x\cdot \parentheses{\rho(t,x) \Dp H}=\gamma \Delta_x \rho(t,x)\,,\\
&\pt \psi(t,x) + \nx\psi(t,x)\tp \Dp H - \gamma \nx\cdot\Dp H  + \gamma \Delta_x \psi(t,x) - L(t,x,\Dp H)  \\
& \quad - \widetilde{F}(t,x,\rho(t,x)) + 2\gamma^2\Delta_x\log\rho(t,x) + \gamma^2\abs{\nx\log\rho(t,x)}^2 = 0\,,\\
& \rho(0,x)=\rho_0(x)\,,\quad \psi(t_{\text{end}},x)=-\widetilde{G}(x,\rho(t_{\text{end}},x))-\gamma\log\rho(t_{\text{end}},x)\,, 
\end{aligned}\right.
\end{equation}
Here, $\Dp H$ is short for $\Dp H(t,x,\nx \psi(t,x) + \gamma \nx \log\rho(t,x))$, $\widetilde{F}(t,x,\rho) =\pd{}{\rho} (F(t,x,\rho) \rho) =\pd{F}{\rho}(t,x,\rho) \rho + F(t,x,\rho)$, and $\widetilde{G}(x,\rho) = \pd{G}{\rho}(x,\rho)\rho + G(x,\rho)$.
\end{prop}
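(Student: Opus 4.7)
The plan is to treat the modified problem \eqref{eq:MFC_modified}--\eqref{eq:MFC_continuity} as a constrained optimization and extract its first-order optimality system. Since the two formulations are equivalent under $v = f + \gamma\nx\log\rho$, it is cleaner to derive the KKT system for the classical problem \eqref{eq:MFC}--\eqref{eq:FokkerPlanck} and then transform. I would introduce a Lagrange multiplier $\phi(t,x)$ for the Fokker--Planck constraint and form
\begin{equation*}
\mathcal{L}[v,\rho,\phi] = \int_0^{t_{\text{end}}}\!\!\int\sqbra{L(t,x,v) + F(t,x,\rho)}\rho\,\rd x\,\rd t + \int G\rho\,\rd x\Big|_{t_{\text{end}}} + \int_0^{t_{\text{end}}}\!\!\int\phi\sqbra{\pt\rho + \nx\cdot(\rho v) - \gamma\Delta_x\rho}\,\rd x\,\rd t.
\end{equation*}
Integration by parts in $t$ and $x$, using decay at spatial infinity and the fixed initial data $\rho(0,\cdot)=\rho_0$, moves all derivatives off $\rho$ and produces an integrand proportional to $\rho\sqbra{L + F - \pt\phi - \nx\phi\cdot v - \gamma\Delta_x\phi}$ plus the terminal boundary contribution $\int(G+\phi)\rho\,\rd x$ at $t_{\text{end}}$.

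Taking variations then yields the KKT system. Varying $v$ gives $\Dv L(t,x,v) = \nx\phi$, which by Legendre duality with $H$ produces $v = \Dp H(t,x,\nx\phi)$. Varying $\rho$ and using $\widetilde{F} = F + \rho\,\partial_\rho F$ produces the classical MFC HJB equation $\pt\phi + H(t,x,\nx\phi) + \gamma\Delta_x\phi = \widetilde{F}(t,x,\rho)$, with terminal condition $\phi(t_{\text{end}},x) = -\widetilde{G}(x,\rho(t_{\text{end}},x))$ coming from the boundary variation. At this point I would introduce the change of variable $\psi := \phi - \gamma\log\rho$, so that $\nx\phi = \nx\psi + \gamma\nx\log\rho$. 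The optimal velocity formula \eqref{eq:optimal_velocity} is then immediate from $v = \Dp H(t,x,\nx\psi+\gamma\nx\log\rho)$ and $f = v - \gamma\nx\log\rho$, and the terminal condition becomes $\psi(t_{\text{end}},x) = -\widetilde{G} - \gamma\log\rho(t_{\text{end}},x)$ as claimed.

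What remains is to convert the classical HJB for $\phi$ into the modified HJB for $\psi$ appearing in \eqref{eq:FP_HJB}. Writing $\pt\psi = \pt\phi - \gamma\pt\log\rho$ and $\Delta_x\phi = \Delta_x\psi + \gamma\Delta_x\log\rho$, I would substitute the Fokker--Planck equation in the form $\pt\log\rho = -\nx\cdot v - v\cdot\nx\log\rho + \gamma\Delta_x\rho/\rho$ together with the identity $\Delta_x\rho/\rho = \Delta_x\log\rho + \abs{\nx\log\rho}^2$, and expand $H(t,x,\nx\phi) = v\cdot\nx\psi + \gamma v\cdot\nx\log\rho - L(t,x,v)$ using Legendre duality. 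The $\gamma v\cdot\nx\log\rho$ contribution from the Hamiltonian expansion cancels against the corresponding drift term arising from $-\gamma\pt\log\rho$, while the two copies of $-\gamma^2\Delta_x\log\rho$ generated by $-\gamma\pt\log\rho$ and by $-\gamma\Delta_x\phi$ combine with the $-\gamma^2\abs{\nx\log\rho}^2$ from the identity to yield, after moving everything to one side, the extra $2\gamma^2\Delta_x\log\rho + \gamma^2\abs{\nx\log\rho}^2$ in \eqref{eq:FP_HJB}. The main obstacle is precisely this algebraic bookkeeping: the factor of two and the precise sign in front of the gradient-squared term come from a delicate cancellation between the Fokker--Planck drift, the diffusion in the adjoint equation, and the Hamiltonian expansion, so every contribution must be tracked with care.
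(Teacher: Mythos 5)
Your proposal is correct and follows essentially the same strategy as the paper: introduce a Lagrange multiplier $\phi$ for the density constraint, extract the first-order KKT system (terminal condition $\phi(t_{\text{end}},\cdot)=-\widetilde{G}$, Legendre duality $\Dv L = \nx\phi \Leftrightarrow v = \Dp H(\cdot,\nx\phi)$, and the classical HJB $\pt\phi + H(t,x,\nx\phi) + \gamma\Delta_x\phi = \widetilde{F}$), and then perform the change of variable $\psi = \phi - \gamma\log\rho$, substituting the logarithmic continuity identity for $\pt\log\rho$ and expanding $H$ via Legendre duality. The one small variation is your starting point: you apply the Lagrangian/variational argument to the original FP-constrained problem \eqref{eq:MFC}--\eqref{eq:FokkerPlanck}, whereas the paper augments the modified transport-constrained problem \eqref{eq:MFC_modified}--\eqref{eq:MFC_continuity}; your choice is marginally cleaner because $L$ then does not hide a $\gamma\nx\log\rho$ in its argument (the paper has to carry an extra $-\gamma\nx\cdot\sqbra{\Dv L\,\rho}/\rho$ contribution through the $\rho$-variation), but both routes land on the same classical HJB for $\phi$, and the subsequent algebra and cancellations leading to the $2\gamma^2\Delta_x\log\rho + \gamma^2\abs{\nx\log\rho}^2$ terms are identical.
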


In the LQ problem, we let $L(t,x,v) = \frac12 |v|^2$ and $F(t,x,\rho)=0$. The MFC problem becomes
\begin{equation}\label{eq:MFC_LQ}
\begin{aligned}
\inf_f \int_0^{t_{\text{end}}} &\int_{\RR^d} \frac12 \abs{f(t,x) + \gamma \nx\log\rho(t,x)}^2 \rho(t,x) \,\rd x \, \rd t \\
+ &\int_{\RR^d} G(x,\rho(t_{\text{end}},x)) \rho(t_{\text{end}},x) \,\rd x\,,
\end{aligned}
\end{equation}
subject to \eqref{eq:MFC_continuity}. In this case, the result becomes the Corollary below.

\begin{coro}\label{coro:LQ}
In the LQ problem where $L(t,x,v) = \frac12 |v|^2$ and $F(t,x,\rho)=0$, the solution of the MFC problem \eqref{eq:MFC_LQ} is given by
$$f(t,x) = \nx\psi(t,x)\,,$$
and
\begin{equation}\label{eq:HJB_FP_LQ}
\left\{\begin{aligned}
&\pt\rho(t,x) + \nabla_x\cdot \parentheses{\rho(t,x) \nx\psi(t,x)}=0\,,\\
&\pt \psi(t,x) + \frac12 \abs{\nx\psi(t,x)}^2 + \gamma^2 \Delta_x\log\rho(t,x) + \frac12 \gamma^2 \abs{\nx\log\rho(t,x)}^2 = 0\,,\\
& \rho(0,x)=\rho_0(x)\,,\quad \psi(t_{\text{end}},x)=-\widetilde{G}(x,\rho(t_{\text{end}},x))-\gamma\log\rho(t_{\text{end}},x)\,.
\end{aligned}\right.
\end{equation}
If we further define the Fisher information as $I[\rho] := \int_{\RR^d} \abs{\nx\log\rho(x)}^2 \rho(x) \,\rd x$, then the equation for $\psi$ in \eqref{eq:HJB_FP_LQ} becomes
$$\pt \psi(t,x) + \frac12 \abs{\nx\psi(t,x)}^2 - \frac12 \gamma^2 \fd{I[\rho(t,\cdot)]}{\rho(t,\cdot)}(x) = 0\,.$$
\end{coro}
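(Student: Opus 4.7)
The plan is to specialize the general result of Proposition \ref{prop:HJB} to the quadratic cost $L(t,x,v)=\tfrac12|v|^2$ with $F\equiv 0$. First I would compute the Hamiltonian: since $\sup_v(v\tp p - \tfrac12|v|^2)$ is attained at $v=p$, we get $H(t,x,p)=\tfrac12|p|^2$ and $\Dp H(t,x,p)=p$. Plugging this into \eqref{eq:optimal_velocity} produces
\begin{equation*}
f(t,x) = \bigl(\nx\psi(t,x) + \gamma\nx\log\rho(t,x)\bigr) - \gamma\nx\log\rho(t,x) = \nx\psi(t,x),
\end{equation*}
which is exactly the claimed optimal feedback. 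Note that $\widetilde F\equiv 0$ since $F\equiv 0$, so the terminal condition carries over verbatim.

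Next I would simplify the FP equation in \eqref{eq:FP_HJB}. With $\Dp H = \nx\psi + \gamma\nx\log\rho$, the drift term becomes $\nx\cdot(\rho\nx\psi) + \gamma\nx\cdot(\rho\nx\log\rho)=\nx\cdot(\rho\nx\psi)+\gamma\Delta_x\rho$, using $\rho\nx\log\rho=\nx\rho$. The $\gamma\Delta_x\rho$ on the right-hand side then cancels, leaving the clean continuity equation in \eqref{eq:HJB_FP_LQ}. This cancellation is precisely the reason for the change of variable \eqref{eq:composed_velocity}.

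The main computation is to verify that the modified HJB equation reduces to the stated form. I would expand each term of the HJB in \eqref{eq:FP_HJB} with $\Dp H=\nx\psi+\gamma\nx\log\rho$ and $L(t,x,\Dp H)=\tfrac12|\nx\psi+\gamma\nx\log\rho|^2$. Concretely, $\nx\psi\tp\Dp H$ contributes $|\nx\psi|^2+\gamma\nx\psi\tp\nx\log\rho$; the term $-\gamma\nx\cdot\Dp H$ contributes $-\gamma\Delta_x\psi-\gamma^2\Delta_x\log\rho$; and $-L$ contributes $-\tfrac12|\nx\psi|^2-\gamma\nx\psi\tp\nx\log\rho-\tfrac12\gamma^2|\nx\log\rho|^2$. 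Adding $\gamma\Delta_x\psi$, $2\gamma^2\Delta_x\log\rho$, and $\gamma^2|\nx\log\rho|^2$ from the remaining terms, the cross terms $\gamma\nx\psi\tp\nx\log\rho$ and the Laplacians of $\psi$ cancel, and the coefficients of $\Delta_x\log\rho$ and $|\nx\log\rho|^2$ collapse to the desired $\gamma^2$ and $\tfrac12\gamma^2$. This bookkeeping is the main (though routine) obstacle.

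Finally, for the Fisher information reformulation, I would compute $\delta I[\rho]/\delta\rho$ from $I[\rho]=\int|\nx\rho|^2/\rho\,\rd x$ using the Euler--Lagrange formula $\partial_\rho\phi-\nx\cdot\partial_{\nx\rho}\phi$, obtaining $|\nx\log\rho|^2-2\Delta_x\rho/\rho$. Using the identity $\Delta_x\rho/\rho=\Delta_x\log\rho+|\nx\log\rho|^2$, this simplifies to $-2\Delta_x\log\rho-|\nx\log\rho|^2$. Multiplying by $-\tfrac12\gamma^2$ gives exactly $\gamma^2\Delta_x\log\rho+\tfrac12\gamma^2|\nx\log\rho|^2$, which is the nonlinear term appearing in the HJB equation above, proving the Fisher-information form.
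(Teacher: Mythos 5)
Your proposal is correct and follows essentially the same route as the paper: specialize Proposition \ref{prop:HJB} via $\Dp H(t,x,p)=p$, carry out the cancellation in the modified HJB equation, and compute $\fd{I[\rho]}{\rho}=-2\Delta_x\log\rho-\abs{\nx\log\rho}^2$ to obtain the Fisher-information form. The only (harmless) addition is your explicit verification that the Fokker--Planck equation collapses to the continuity equation, which the paper leaves implicit.
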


Let $z_t$ be the characteristic line of the state trajectory, satisfying $\pt z_t = f(t,z_t)$ where $f$ is given by \eqref{eq:optimal_velocity}. Then, a residual of the HJB equation along the trajectory $z_t$ can be computed to enhance the objective function. For example, in the LQ example in Corollary \ref{coro:LQ}, we know that $\psi$ satisfies
\begin{equation}\label{eq:psi_regularizer}
\pt\psi(t,z_t) + \frac12 \abs{\nz \psi(t,z_t)}^2 + \gamma^2\parentheses{\Tr(H_t)+\frac12|s_t|^2}=0\,. 
\end{equation}

Also, by \eqref{eq:composed_velocity}, we can recover the optimal velocity field $v$ through 
\begin{equation*}
 v(t,z_t)= f(t,z_t) + \gamma s_t\,,   
\end{equation*}
where $f$ is the optimal vector field of the modified MFC problem. This regularization technique could be extended to a more general setting, such as the flow matching problem for overdamped Langevin dynamics. See Corollary \ref{coro:flow_matching} in \ref{sec:proofs} for details.

\subsection{Numerical algorithms}\label{sec:numeric_alg}
In this section, we present numerical algorithms to solve the MFC problem. We minimize the objective \eqref{eq:MFC_modified} to obtain the optimal vector field $f$. Additionally, we can incorporate the residual of the HJB equation as a regularizer to enhance the loss functional, as discussed after Corollary \ref{coro:LQ}.

The composed velocity field $f$ is parametrized as a neural network, as defined in \eqref{eq:f_NN}. In order to simulate the cost functional numerically, we sample multiple initial points $z_0^{(n)} \sim \rho_0$ with a batch size $N_z$. Then, we can simulate the dynamics of $z_t^{(n)}$, $l_t^{(n)}$, $\tl_t^{(n)}$, $s_t^{(n)}$, and $H_t^{(n)}$ numerically for each particle through \eqref{eq:zt_discrete}, \eqref{eq:lt_discrete}, \eqref{eq:Lt_discrete}, \eqref{eq:st_discrete}, and \eqref{eq:Ht_discrete} respectively, where the derivatives of $f$ are obtained via auto-differentiation. Then, we simulate the cost functional \eqref{eq:MFC_modified} by 
\begin{equation}\label{eq:cost_numeric}
\begin{aligned}
\mL_{\text{cost}} = \dfrac{1}{N_z}\sum_{n=1}^{N_z}  \sum_{j=0}^{N_t-1} & \left[ \left(L\parentheses{t_j, z_{t_j}^{(n)}, f(t_j, z_{t_j}^{(n)};\theta) - \gamma s_{t_j}^{(n)}} \right.\right.\\
&\left.\left. + F\parentheses{t_j, z_{t_j}^{(n)}, \tl_{t_j}^{(n)}}\right) \Delta t + G(z_{t_{N_t}}^{(n)},\tl_{t_{N_t}}^{(n)})\right]\,.
\end{aligned}
\end{equation}
With this simulation cost, we minimize this loss $\mL_{\text{cost}}$ using the Adam method. The algorithm is summarized in Algorithm \ref{alg:MFC}. 

\begin{algorithm}[htb]
\caption{Score-based normalizing flow solver for the MFC problem}\label{alg:MFC}
\begin{algorithmic}
\Require MFC problem \eqref{eq:MFC} \eqref{eq:FokkerPlanck}, $N_t$, $N_z$, network structure \eqref{eq:f_NN}, learning rate, number of iterations
\Ensure the solution to the MFC problem
\State Initialize $\theta$
\For{$\text{index}=1$ \textbf{to} $\text{index}_{\text{end}}$}
\State Sample $N_z$ points $\{z_0^{(n)}\}_{n=1}^{N_z}$ from the initial distribution $\rho_0$
\State Compute $\tl_0^{(n)} = \rho(0,z_0^{(n)})$, $s_0^{(n)} = \nz \log \rho(0,z_0^{(n)})$
\State Initialize loss $\mL_{\text{cost}} = 0$
\For{$j=0$ \textbf{to} $N_t-1$}
\State update loss $\displaystyle \mL_{\text{cost}} \pluseq \dfrac{1}{N_z} \sum_{n=1}^{N_z} \parentheses{L(t_j,z_{t_j}^{(n)},f(t_j,z_{t_j}^{(n)};\theta) - \gamma s_{t_j}^{(n)}) + F(t_j,z_{t_j}^{(n)},\tl_{t_j}^{(n)})} \Delta t$
\State compute $\parentheses{\nz, \nz\cdot, \nz(\nz\cdot)} f(t_j,z_{t_j}^{(n)};\theta)$ according to \eqref{eq:f_NN}
\State compute $z_{t_{j+1}}^{(n)}$, $\tl_{t_{j+1}}^{(n)}$, $s_{t_{j+1}}^{(n)}$ through the forward Euler scheme \eqref{eq:zt_discrete}, \eqref{eq:Lt_discrete}, and \eqref{eq:st_discrete}
\EndFor
\State add terminal cost $\displaystyle \mL_{\text{cost}} \pluseq \frac{1}{N_z} \sum_{n=1}^{N_z} G(z_{t_{N_t}}^{(n)}, \tl_{t_{N_t}}^{(n)})$
\State update the parameters $\theta$ through Adam method to minimize the loss $\mL_{\text{cost}}$
\EndFor
\end{algorithmic}
\end{algorithm}

To further improve performance, we can add a penalty of the residual of the HJB equation, denoted by $\mL_{\text{HJB}}$, to enhance the loss function. This regularization technique is detailed in \ref{sec:regularizer}. The total loss function is hence given by
\begin{equation}\label{eq:total_loss}
\mL_{\text{total}} = \mL_{\text{cost}} + \lambda\mL_{\text{HJB}}\,,
\end{equation} where $\lambda\ge0$ is a weight parameter.

\section{Numerical results}\label{sec:numerical_results}

We present numerical results for several examples in this section, including the regularized Wasserstein proximal operator (RWPO), the flow matching problem, a linear-quadratic (LQ) problem with an entropy potential cost, and an example with double well potential. For examples with exact solutions, we present all errors for the density ($\text{err}_{\rho}$), velocity field ($\text{err}_{f}$), and score function ($\text{err}_{s}$). These errors are calculated as averages over $10$ independent runs, see \eqref{eq:three_errors} in \ref{sec:errors} for details. The parameters used for all numerical examples are provided in \ref{sec:hyperparameters}.

\subsection{Regularized Wasserstein proximal operator}\label{sec:WPO}
In the RWPO problem, the objective is a regularized Benamou-Brenier formulation for optimal transportation \citep{benamou2000computational}. We minimize the cost functional
\begin{equation*}
\inf_v \int_0^1 \int_{\RR^d} \frac12 \abs{v(t,x)}^2 \rho(t,x) \, \rd x \, \rd t + \int_{\RR^d} G(x) \rho(1,x) \,\rd x\,,
\end{equation*}
subject to the FP equation $\pt \rho(t,x) + \nx \cdot (\rho(t,x) v(t,x)) = \gamma \Delta_x \rho(t,x)$. Here we set $G(x) = |x|^2/2$ and $\rho_0(x) = (8\pi\gamma)^{-d/2} \exp\parentheses{-|x|^2 / (8\gamma)}$. 

After the transformation to probability flow through \eqref{eq:composed_velocity}, the problem becomes
\begin{equation*}
\inf_f \int_0^{1} \int_{\RR^d} \frac12 \abs{f(t,x) + \gamma \nx \log(\rho(t,x))}^2 \rho(t,x) \, \rd x \, \rd t + \int_{\RR^d} G(x) \rho(1,x) \,\rd x\,,
\end{equation*}
subject to the transport equation
\begin{equation*}
\pt \rho(t,x) + \nx \cdot (\rho(t,x) f(t,x)) = 0\,.
\end{equation*}
The optimal density evolution is given by
\begin{equation*}
\rho(t,x) = \parentheses{4\pi (2-t)\gamma}^{-\frac{d}{2}} \exp\parentheses{-\dfrac{\abs{x}^2}{4 \gamma (2-t)}}\,,
\end{equation*}
and the optimal velocity field is
\begin{equation*}
f(t,x) = -\dfrac{x}{2(2-t)}\,.
\end{equation*}

We test Algorithm \ref{alg:MFC} on this problem in $1$, $2$, and $10$ dimensions. These errors are summarized in Table \ref{tab:WPO}. Detailed definitions for $\text{err}_{\rho}$, $\text{err}_{f}$, and $\text{err}_{s}$ are given in \ref{sec:errors}. Additionally, we report the cost gap, which represents the difference between the computed cost and the optimal cost, averaged over 10 independent runs.
\begin{table}[htb]
\centering
\begin{tabular}{c|cccc}
\midrule
errors &  $\text{err}_{\rho}$ & $\text{err}_{f}$ & $\text{err}_{s}$ & cost gap \\
\midrule
$1d$ & \num{3.52e-3} & \num{3.45e-2} &\num{8.52e-3}  & \num{1.33e-2} \\
$2d$ & \num{6.53e-3} & \num{3.40e-2} &\num{4.49e-2}  & \num{1.24e-2} \\
$10d$ & \num{1.68e-3} & \num{3.15e-2} &\num{6.42e-2}  & \num{1.69e-1} \\
\midrule
\end{tabular}
\caption{Errors for the RWPO problem.}
\label{tab:WPO}
\end{table}
The results are also visualized in Figure \ref{fig:WPO}. The plot on the left shows the cost functional through training in $1$ dimension, which becomes close to the optimal cost in red. The plot in the middle compares the evolution of the density function computed through \eqref{eq:Lt_discrete} under trained velocity with the true density evolution. Our density dynamic \eqref{eq:Lt_dynamic} accurately captures the density evolution. The plot on the right shows the particle trajectories of $z_t$ in $2$ dimensions and compares them with the stochastic dynamics. Our probability flow ODE demonstrates a  structured behavior.

\begin{figure}[htbp]
\centering
\includegraphics[width=0.33\textwidth]{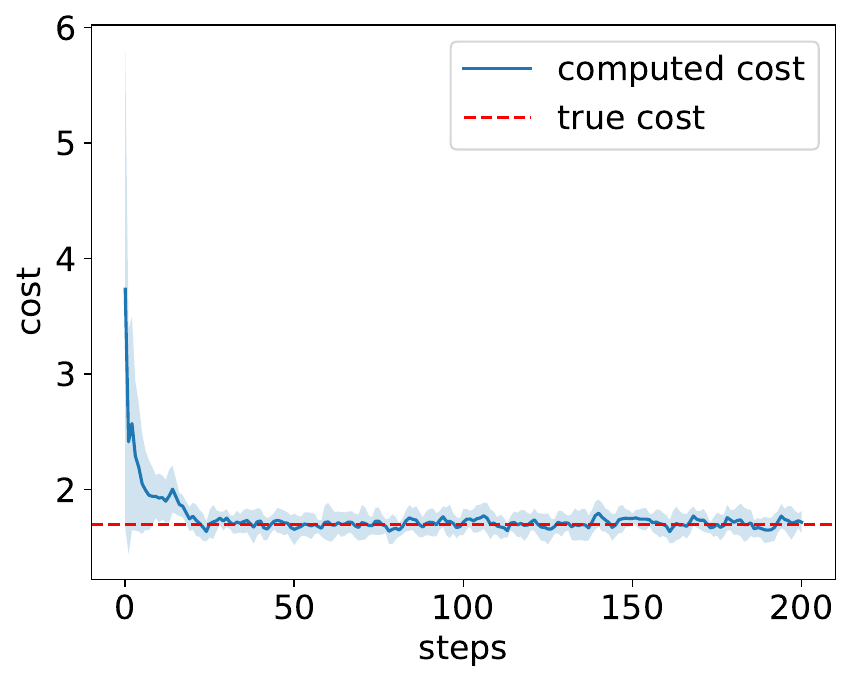}
\includegraphics[width=0.33\textwidth]{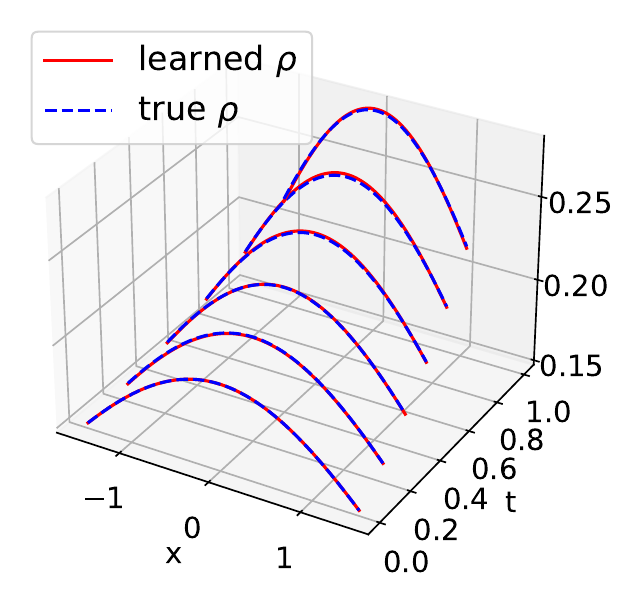}
\includegraphics[width=0.32\textwidth]{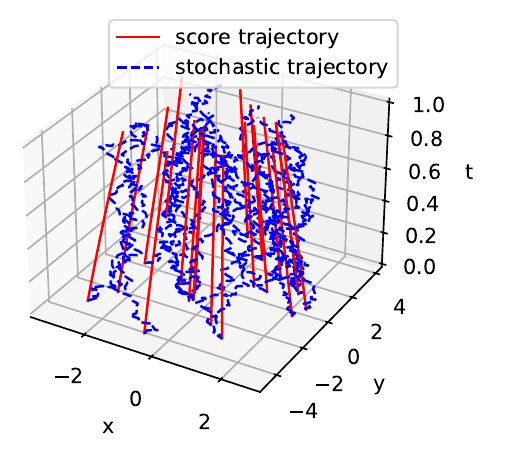}
\caption{Numerical results for the RWPO problem. Left: cost functional through training with optimal cost in $1d$. Middle: density evolution through \eqref{eq:Lt_dynamic} and comparison with true density in $1d$. Right: trajectories of $z_t$ and comparison with the stochastic dynamic in $2d$. The score dynamic demonstrates a structured behavior compared with the corresponding stochastic trajectory simulated by stochastic differential equations.}
\label{fig:WPO}
\end{figure}

We present additional numerical results in Figure \ref{fig:WPO_add}. The first, second, and third rows of Figure \ref{fig:WPO_add} show numerical results in $1$, $2$, and $10$ dimensions, respectively. The first column shows the curves of the cost function through training. The light blue shadows plot the standard deviation observed during $10$ independent test runs. The red dashed lines represent the cost under the optimal control field. We observe that our algorithm nearly reaches the optimal cost.

The second column in Figure \ref{fig:WPO_add} plots the first dimension of the velocity field at $t=0$. Our neural networks (in blue) accurately captures the true velocity (in orange). The third column presents similar plots for the velocity fields, but at $t=t_{\text{end}} = 1$. Our algorithm also computes the velocity fields accurately. 

The fourth column in the first row of Figure \ref{fig:WPO_add} plots the score function (in blue) at $t=1$ using the data points $\{ (z^{(n)}_{t_{N_t}}, s^{(n)}_{t_{N_t}}) \}_{n=1}^{N_z}$. This curve coincides with the true score function in orange. Note that the score function at $t=0$ is given, so we only plot the score at a terminal time. The second and third figures in the fourth column of Figure \ref{fig:WPO_add} show a density plot of the first dimension of the velocity field. These are probability density functions of $f(0,z_0;\theta)$ and $f(0,z_0)$, where $z_0 \sim \rho_0$. The density functions are further approximated by the histogram of samples. Such techniques for visualization of high dimensional functions have been applied in \cite{han2020solving}. We observe that our neural network accurately captures the velocity fields. 

\begin{figure}[htbp]
\centering
\includegraphics[width=0.245\textwidth]{figures/WPO1d_J.pdf}
\includegraphics[width=0.245\textwidth]{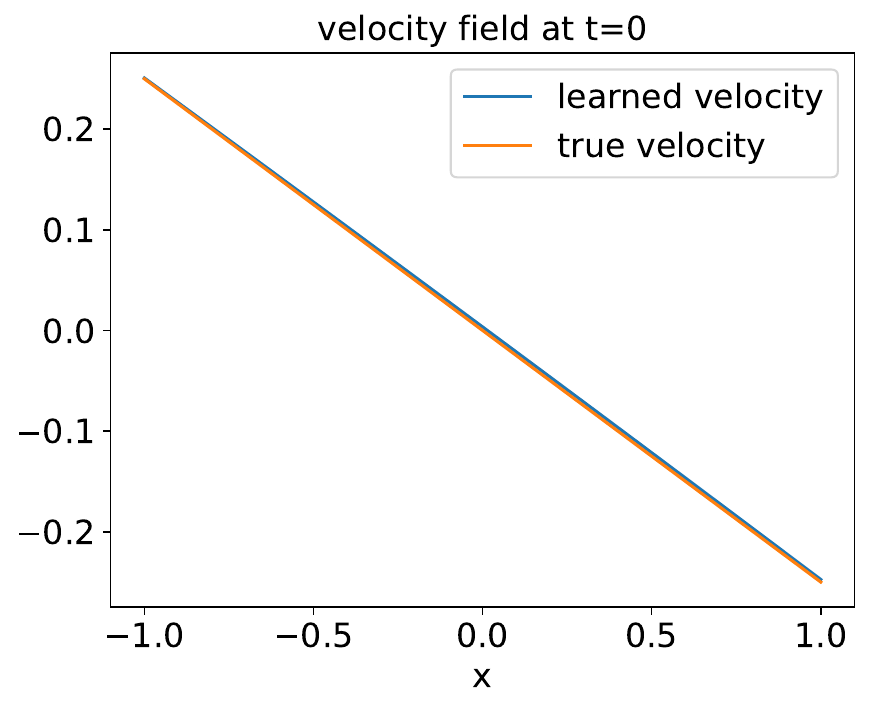}
\includegraphics[width=0.245\textwidth]{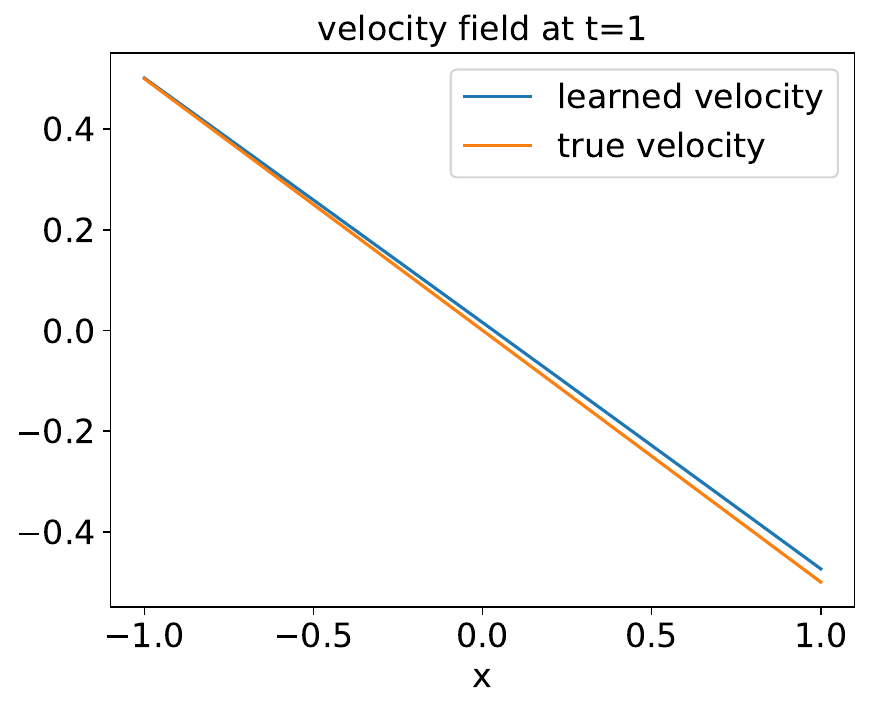}
\includegraphics[width=0.245\textwidth]{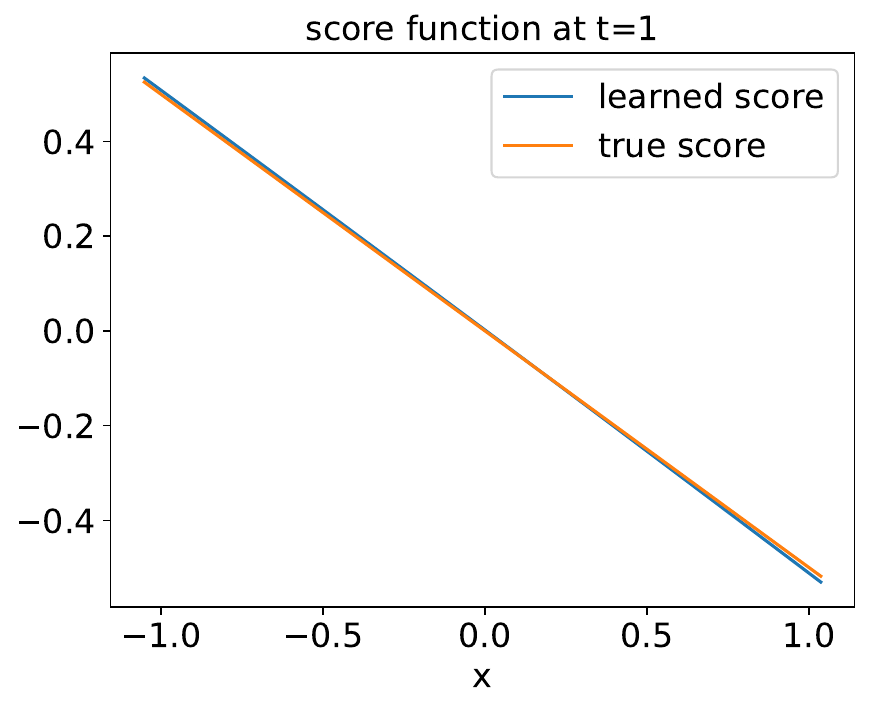}
\includegraphics[width=0.245\textwidth]{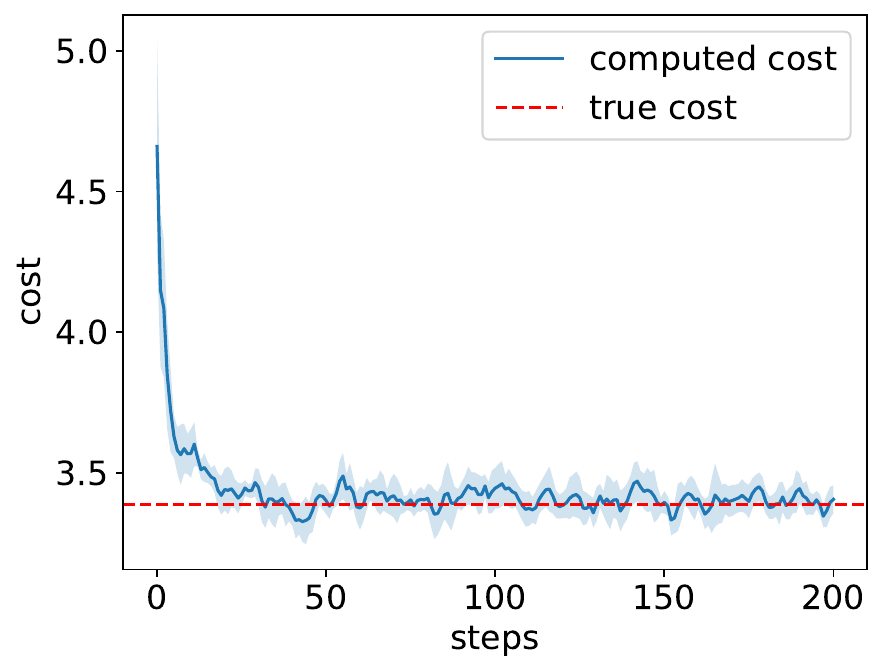}
\includegraphics[width=0.245\textwidth]{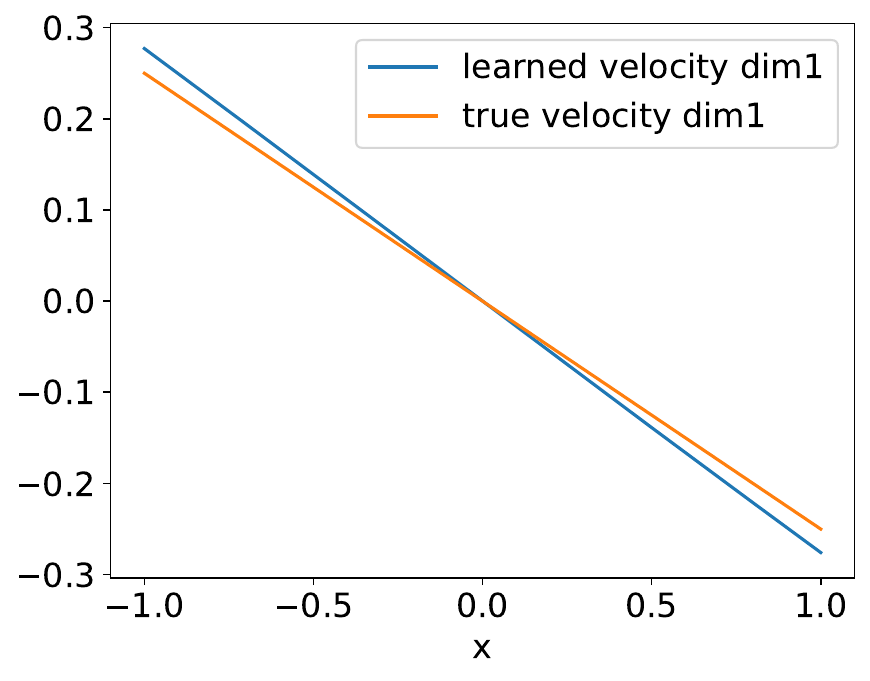}
\includegraphics[width=0.245\textwidth]{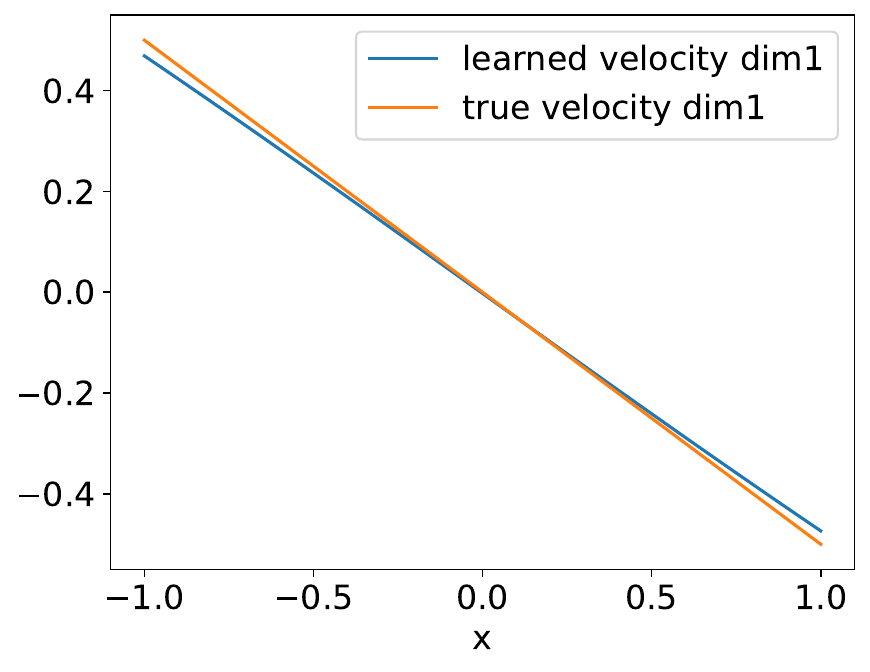}
\includegraphics[width=0.245\textwidth]{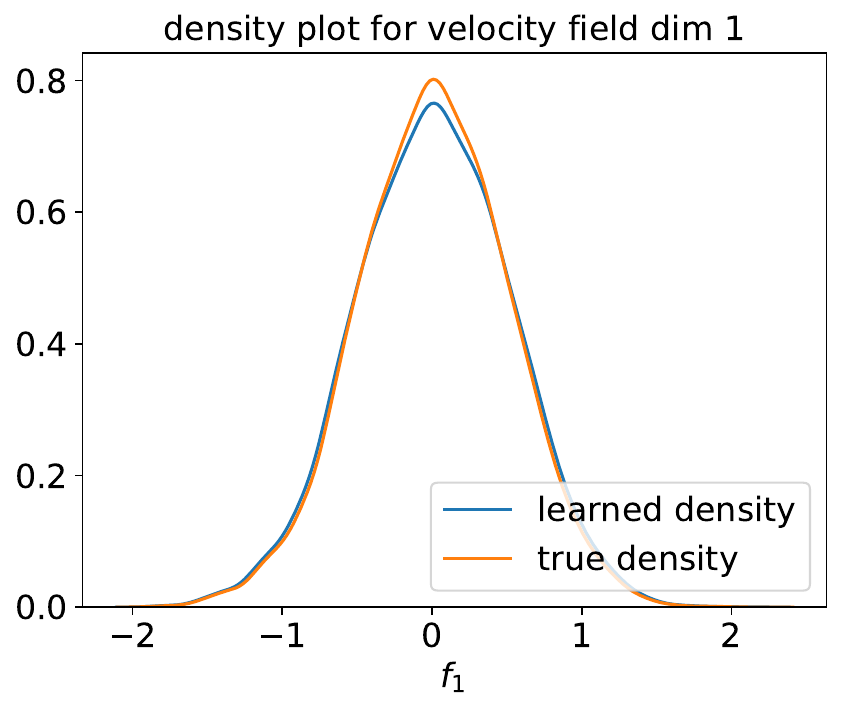}
\includegraphics[width=0.245\textwidth]{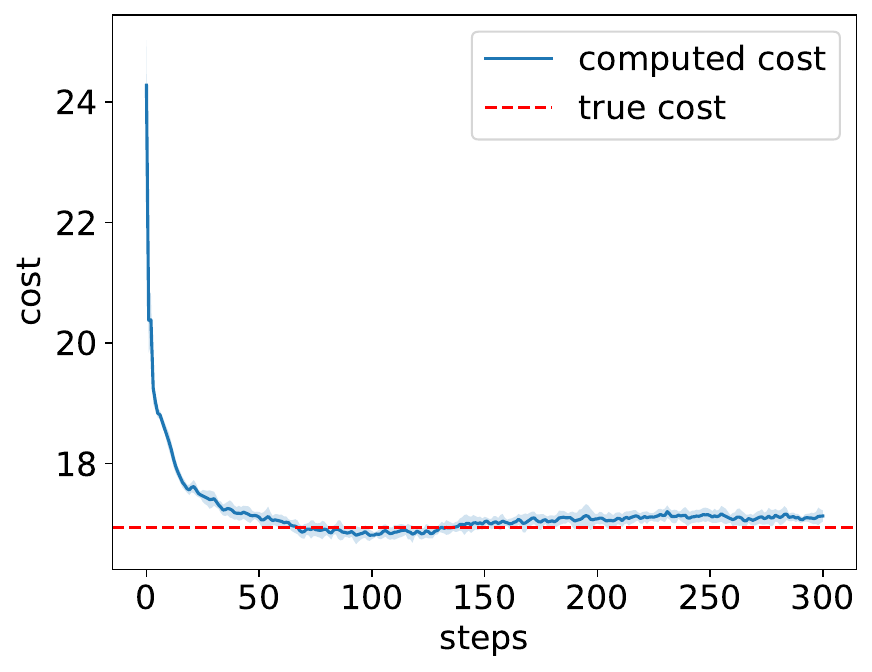}
\includegraphics[width=0.245\textwidth]{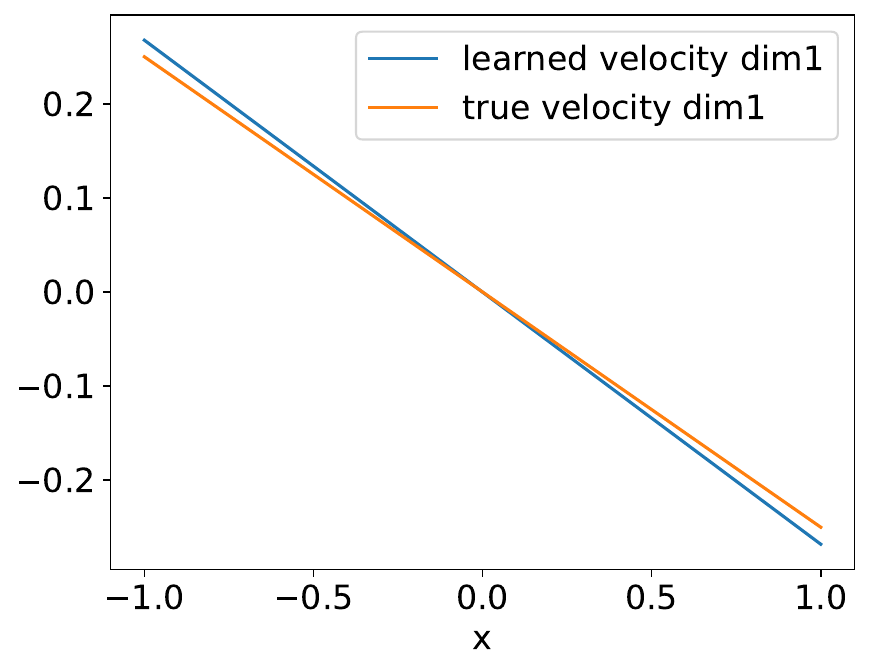}
\includegraphics[width=0.245\textwidth]{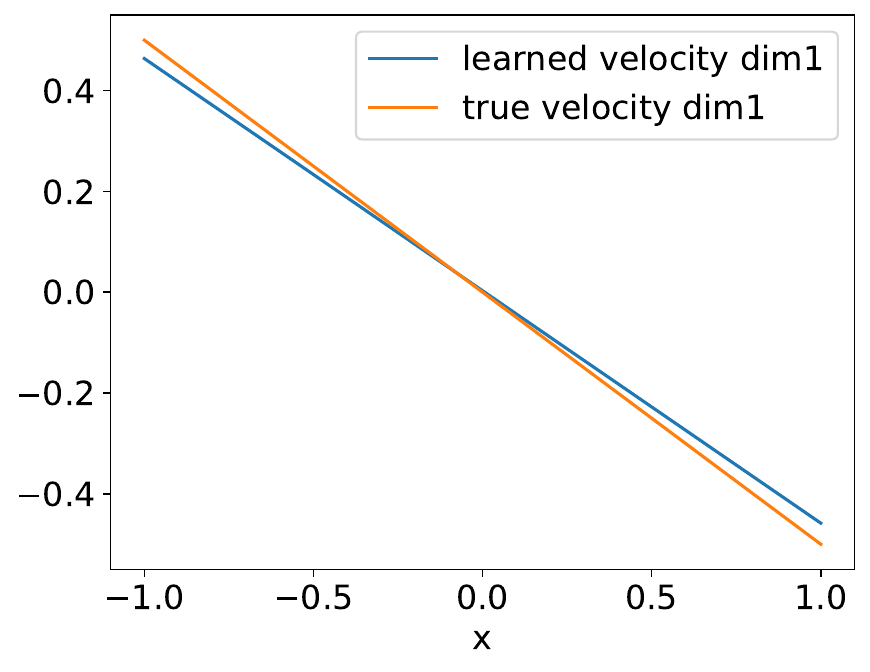}
\includegraphics[width=0.245\textwidth]{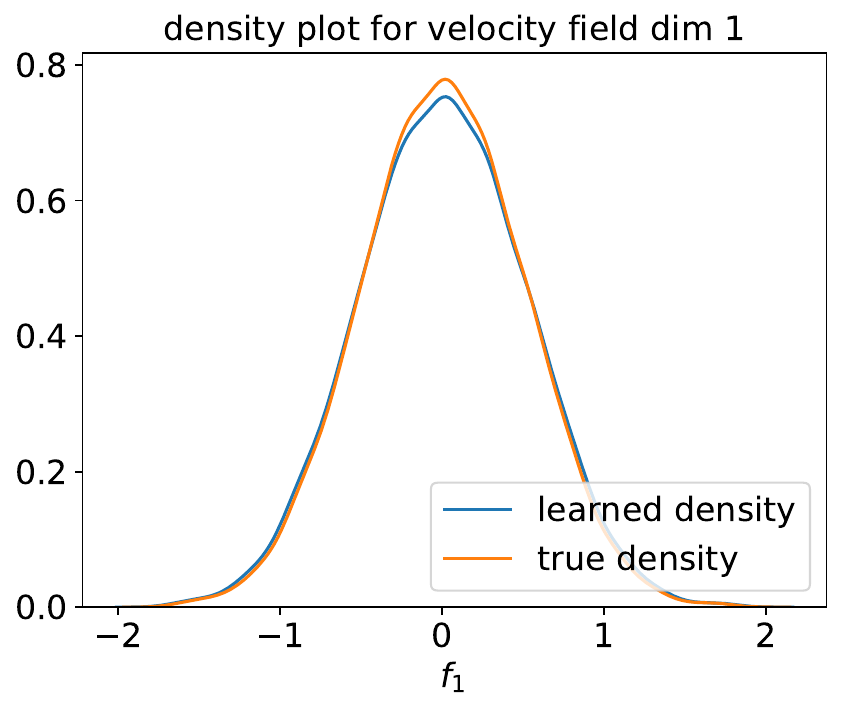}
\caption{The first, second and third row: numerical results for RWPO in $1d$, $2d$, and $10d$. First column: cost functional through training. Second and third columns: first dimension of the velocity field at $t=0$ and $t=1$. Fourth column: the first row is the score function at $t=1$, the second and third rows are the density plots for the first dimension of the velocity field. Our numerical results accurately capture the true solutions.}
\label{fig:WPO_add}
\end{figure}

\medskip
\noindent
\textbf{Adding HJB regularizer.} We also test the regularized Algorithm \ref{alg:MFC_HJB} for this example. We compare all results with weight parameters $\lambda = \num{1e-3}$ and $\lambda=0$ in \eqref{eq:total_loss} to study the effect of this regularization. All results are presented in table \ref{tab:WPOregFD}. The errors for the regularized algorithm are significantly smaller.

\begin{table}[htb]
\centering
\begin{tabular}{c|cc|cc}
\midrule
regularization & \multicolumn{2}{c|}{$\lambda=\num{1e-3}$} & \multicolumn{2}{c}{$\lambda=0$} \\
\midrule
errors & $\text{err}_A$ & $\text{err}_B$ & $\text{err}_A$ & $\text{err}_B$ \\
\midrule
$1d$ & \num{2.33e-3} & \num{3.03e-3} & \num{1.36e-2} & \num{1.00e-2}\\
$2d$ & \num{3.30e-3} & \num{4.10e-3} & \num{1.74e-2} & \num{1.70e-2}\\
$10d$ & \num{7.52e-3} & \num{4.45e-3} & \num{8.34e-2} & \num{4.02e-2}\\
\midrule
\end{tabular}
\caption{Errors for regularized MFC solver of RWPO problem. The errors for the regularized algorithm are significantly smaller.} 
\label{tab:WPOregFD}
\end{table}

\subsection{Flow matching for solving FP equations}\label{sec:flow_matching}
Flow matching has emerged as an important problem, which is closely related to generative models \citep{lipman2022flow,boffi2023probability}. The problem is to simulate the probability density function of a stochastic dynamics:
\begin{equation}\label{eq:sd}
  \rd X_t=b(t,X_t) \,\rd t + \sqrt{2\gamma} \,\rd W_t,\quad X_0\sim \rho_0
\end{equation}
where $b\colon \mathbb{R}^+\times \mathbb{R}^d\rightarrow\mathbb{R}^d$ is a known drift vector field, and $\rho_0\colon \mathbb{R}^d\rightarrow\mathbb{R}$ is an initial value probability density function. In this example, we also assume the drift function satisfies $b(t,x)=-\nabla V(x)$, which is the negative gradient of some potential function $V(x) \in C_{\text{loc}}^4(\RR^d)$. In this case, SDE \eqref{eq:sd} is the overdamped Langevin dynamic. 
To simulate the density function of stochastic process $X_t$ in \eqref{eq:sd}, we design the following MFC problem. We minimize the objective functional (loss function) as 
\begin{equation*}
\inf_v \int_0^{t_{\text{end}}} \int_{\RR^d} \frac12 \abs{v(t,x)-b(t,x)}^2 \rho(t,x) \, \rd x \, \rd t\,,
\end{equation*}
subject to the FP equation \eqref{eq:FokkerPlanck}. After the score transformation \eqref{eq:composed_velocity}, the above second order MFC problem becomes
\begin{equation*}
\inf_f \int_0^{t_{\text{end}}} \int_{\RR^d} \frac12 \abs{f(t,x)-b(t,x)+\gamma\nx\log\rho(t,x)}^2 \rho(t,x) \, \rd x \, \rd t\,,
\end{equation*}
subject to the transport equation \eqref{eq:MFC_continuity}. 
The state dynamic hence becomes $\pt z_t=-\nz V(z_t) -\gamma s_t$. This transformation \eqref{eq:composed_velocity} seems to complicate the problem. However, we are able to learn the whole FP equation of the overdamped Langevin dynamic with a given initial distribution $\rho_0$ or samples $\{z_0^{(n)}\}_{n=1}^{n_z}$. More importantly, we can record all intermediate time steps and compute the density function during the entire time domain $[0, t_{\text{end}}]$. 

\medskip
\noindent
\textbf{Flow matching for OU process.} We first present the flow matching problem for an Ornstein–-Uhlenbeck (OU) process, where the explicit solution is given in \ref{sec:flow_matching_OU}. The algorithm is described with details in \ref{sec:regularizer} and summarized in Algorithm \ref{alg:MFC_OU}. Similar to the previous section, we compare the numerical results for a direct method and the regularized method in Table \ref{tab:FMregFD}. The errors for the regularized algorithm are significantly smaller. 

\begin{table}[htb]
\centering
\begin{tabular}{c|cc|cc}
\midrule
regularization & \multicolumn{2}{c|}{$\lambda=\num{1e-3}$} & \multicolumn{2}{c}{$\lambda=0$} \\
\midrule
errors & $\text{err}_A$ & $\text{err}_B$ & $\text{err}_A$ & $\text{err}_B$ \\
\midrule
$1d$ & \num{9.81e-4} & \num{8.58e-4} & \num{7.89e-3} & \num{7.47e-3}\\
$2d$ & \num{1.50e-3} & \num{1.37e-3} & \num{2.02e-2} & \num{1.40e-2}\\
$10d$ & \num{4.08e-3} & \num{6.73e-3} & \num{7.86e-2} & \num{3.89e-2}\\
\midrule
\end{tabular}
\caption{Errors for the regularized MFC solver of flow matching for OU processes. The errors for the regularized algorithm are significantly smaller.}
\label{tab:FMregFD}
\end{table}

The numerical results of flow matching for OU process are shown in Figure \ref{fig:FM1d} and \ref{fig:FM2d}. Figure \ref{fig:FM1d} shows the density evolution of the process in $1$ dimension, which is similar to the second plot in Figure \ref{fig:WPO}. Our algorithm captures both the change of mean value and the shrink of variance accurately. Figure \ref{fig:FM2d} shows the particle dynamic under the trained probability flow in $2$ dimensions and its comparison with the stochastic OU process. We add level sets of the density function with center $\mu(t)$ and radius $\frac14 \sigma(t)$, $\frac12 \sigma(t)$, and $\sigma(t)$ for better visualization, where $\sigma(t) := \det(\Sigma(t))^{\frac{1}{2d}}$ denotes the standard deviation. We pick points that are initialized within the circle with center $\mu(0)$ and radius $\sigma(0)$, and record the trajectories of these particles. According to Figure \ref{fig:FM2d}, our algorithm accurately captures the change of mean and variance of the OU process. Additionally, compared with the OU process, our probability flow ODE demonstrates structured behaviors.

\begin{figure}[htbp]
\centering
\includegraphics[width=0.35\textwidth]{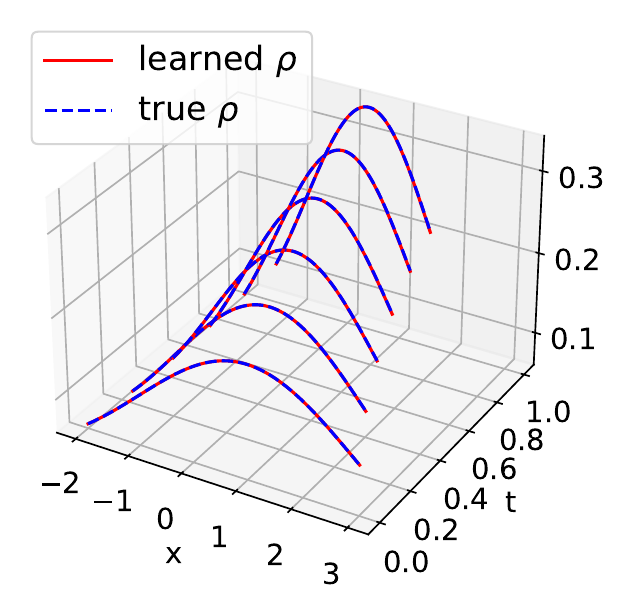}
\caption{Flow matching for OU process in $1$ dimension. Our algorithm accurately captures the density evolution of the state dynamic, including the change of mean and variance.}
\label{fig:FM1d}
\end{figure}

\begin{figure}[htbp]
\centering
\includegraphics[width=0.99\textwidth]{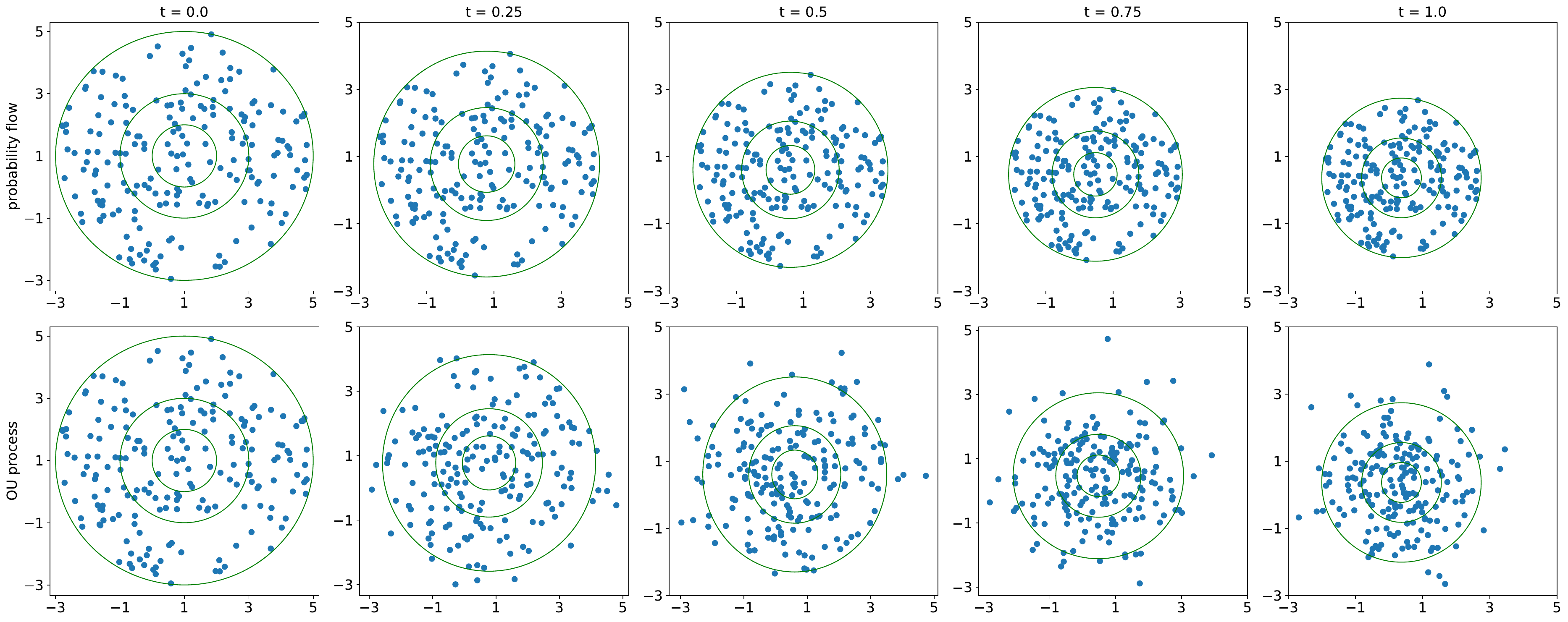}
\caption{Flow matching for OU process in $2$ dimension. First line: particle evolution under the probability flow $z_t$ with trained velocity field. Second line: simulation of the stochastic OU process. The circles in the figure are centered at $\mu(t)$, with radius $\frac14 \sigma(t)$, $\frac12 \sigma(t)$, and $\sigma(t)$. Our algorithm accurately captures the density evolution of the state dynamic, including the change of mean and variance. The probability flow demonstrates more structured behavior compared with the stochastic OU process.}
\label{fig:FM2d}
\end{figure}

\medskip
\noindent
\textbf{Double moon example.}
The example with double moon potential is given by
\begin{equation}\label{eq:doublemoon_potential}
V(x)=2(|x|-3)^2-2 \log \left[\exp \left(-2\left(x_1-3\right)^2\right)+\exp \left(-2\left(x_1+3\right)^2\right)\right]\,.
\end{equation}
This example has been computed in \cite{wang2022accelerated,tan2023noise}. We aim to learn the probability flow ODE of the overdamped Langevin dynamic
\begin{equation*}
\pt \rho(t,x) - \nx \cdot (\rho(t,x) \nx V(t,x)) = \gamma \Delta_x \rho(t,x)\,,
\end{equation*}
where the initial distribution $\rho_0$ is $N(0,1)$. There are two moon-shaped patterns from this potential function. Therefore, the state dynamic has a bifurcation phenomenon, which is usually hard to capture.

We design an algorithm that partitions the time interval into several sub-intervals, which could potentially resolve the issue of long time horizon. The model is trained consecutively over each sub-interval, referred to as the \textbf{multi-stage splicing method}.

We set the total time span as $t\in[0,0.4]$. The overdamped Langevin dynamic is already close to its stable distribution at $t=0.4$; see the last scattered plot in Figure \ref{fig:FMDM}. Also, our algorithm has demonstrated its ability to learn the dynamic accurately within a longer interval in other examples. 

Next, we present the detailed implementation of the multi-stage splicing method for our double moon example. In this toy example, we partition the interval into two stages (sub-intervals) $[0,0.2]$ and $[0.2,0.4]$. We apply Algorithm \ref{alg:MFC} within the first interval. After training, we save the trained network as a warm-start (initialization) for the training in the next stage. We also record the particles $z_t^{(n)}$ at terminal time $t=0.2$ in the first stage, which serves as the initial distribution for training in the next stage.

In the second stage $t \in [0.2,0.4]$, the training process is similar. We inherit the network parameter $\theta$ from the previous stage as initialization. Also, the state particles $z_t^{(n)}$ at terminal time from the last stage are utilized as initialization distribution for the new stage. Note that starting from the second stage, there is no longer resampling because we only have finite samples at $t=0.2$. As a consequence, we may encounter the problem of overfitting, which is also known as model collapse. In this work, we apply a $L^2$ regularization with weight $0.1$ to avoid this issue. If we only have finite samples for the initial distribution, then regularization should also be added in the first stage of training. We summarize this multi-stage splicing method in Algorithm \ref{alg:MFC_splicing}.

\begin{figure}[htbp]
\centering
\includegraphics[width=0.99\textwidth]{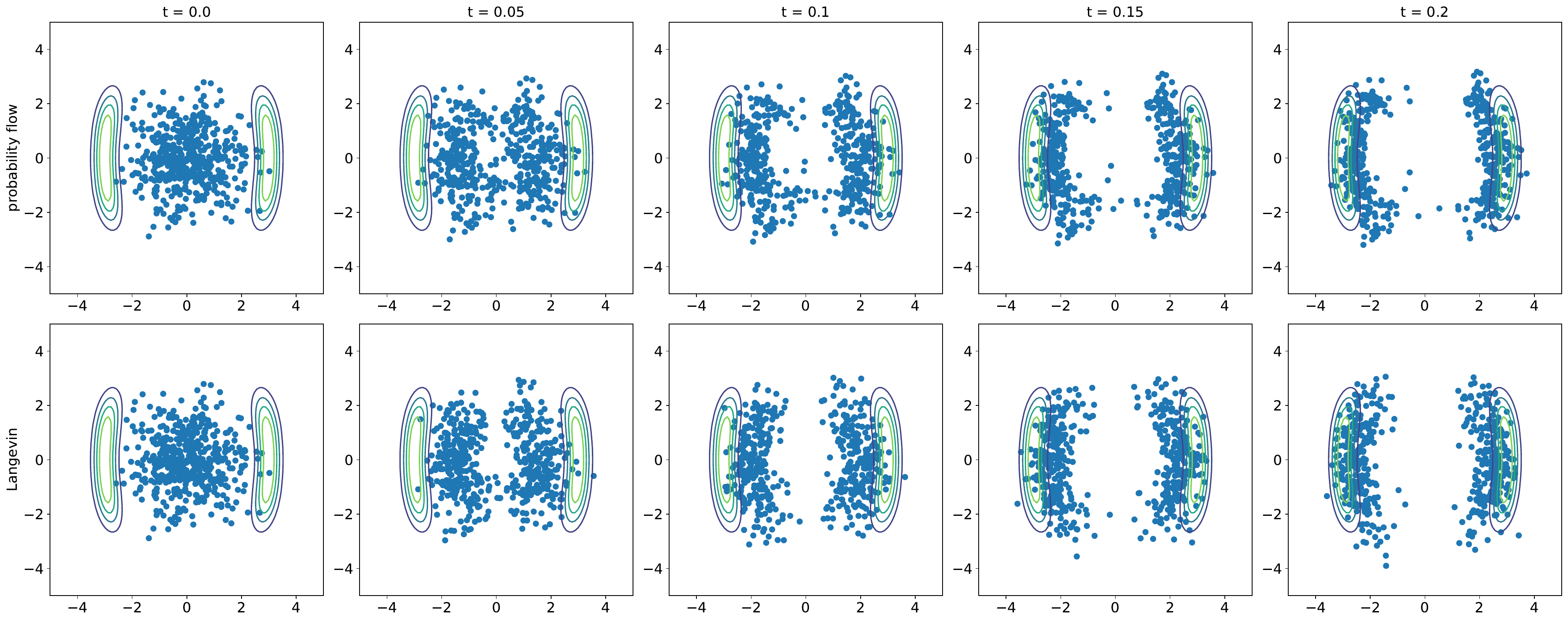}
\includegraphics[width=0.99\textwidth]{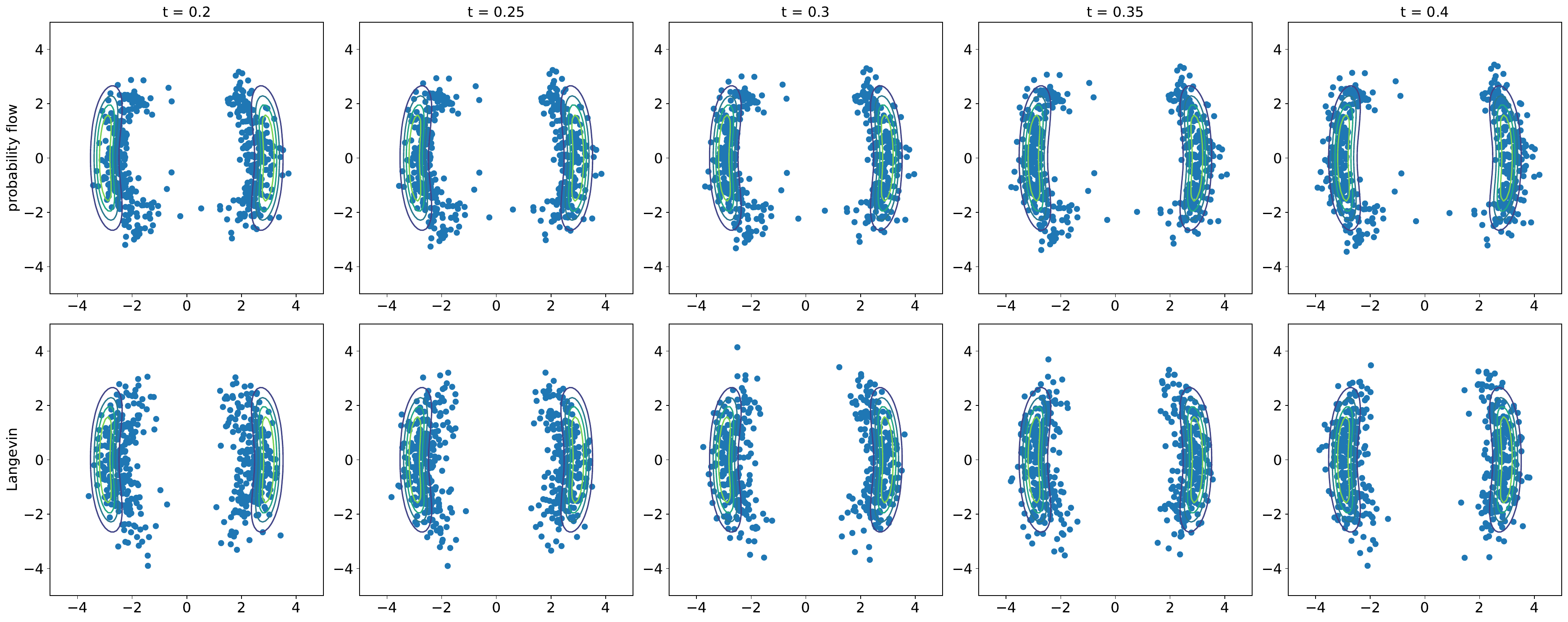}
\caption{2D flow matching double moon. First row: evolution of particles under the trained probability flow $z_t$ in the first stage $[0,0.2]$. Second row: evolution of particles under the overdamped Langevin dynamic in the first stage $[0,0.2]$. Third row: evolution of particles under the trained probability flow $z_t$ in the second stage $[0.2,0.4]$. Fourth row: evolution of particles under the overdamped Langevin dynamic in the second stage $[0.2,0.4]$. Our multi-stage splicing method captures the overdamped Langevin dynamic correctly.}
\label{fig:FMDM}
\end{figure}

The numerical result for the splicing method is shown in Figure \ref{fig:FMDM}. The first row shows the particle dynamic of state $z_t$ under the trained velocity field within the first stage, which coincides with an overdamped Langevin dynamic in the second row. The third row shows the particle dynamic in the second stage after training, coinciding with the overdamped Langevin dynamic in the fourth row. We also add the level sets for the density function of the stationary distribution for better visualization. These results confirm that our multistage splicing method can capture the Fokker-Planck equation of an overdamped Langevin dynamic in a total time span.

\subsection{An LQ example with an entropy potential cost}\label{sec:LQ}
We consider an LQ example in this section. This example was also studied in \cite{APAC}. We minimize the cost functional
\begin{equation*}
\begin{aligned}
\inf_v \int_0^{t_{\text{end}}} & \int_{\RR^d} \parentheses{\frac12\abs{v(t,x)}^2 + \frac12 |x|^2 + \beta\log(\rho(t,x))} \rho(t,x) \,\rd x \,\rd t \\
+ &\int_{\RR^d} G(x) \rho(t_{\text{end}},x) \,\rd x\,,
\end{aligned}
\end{equation*}
subject to 
\begin{equation*}
\pt \rho(t,x) + \nx \cdot (\rho(t,x) v(t,x)) = \Delta_x \rho(t,x)\,, \quad\quad \rho(0,x)=\rho_0(x)\,.
\end{equation*}
With a score substitution, the problem is equivalent to
\begin{align*}
\inf_f \int_0^{t_{\text{end}}} \int_{\RR^d} & \parentheses{\frac12\abs{f(t,x) + \nx \log(\rho(t,x))}^2 + \frac12 |x|^2 + \beta\log(\rho(t,x))} \rho(t,x) \,\rd x \,\rd t \\
& + \int_{\RR^d} G(x) \rho(t_{\text{end}},x) \,\rd x\,,
\end{align*}
subject to 
\begin{equation*}
\pt \rho(t,x) + \nx \cdot (\rho(t,x) f(t,x)) = 0\,, \quad\quad \rho(0,x)=\rho_0(x)\,.
\end{equation*}
We define $\alpha := \parentheses{\sqrt{\beta^2+4}-\beta} / 2$. The initial distribution $\rho_0$ is Gaussian $N(0, \frac{1}{\alpha} I_d)$ and the terminal cost is $G(x) = \frac{\alpha}{2} |x|^2$. The optimal density evolution is given by 
$$\rho(t,x) = \parentheses{\dfrac{\alpha}{2\pi}}^{d/2} \exp\parentheses{-\dfrac{\alpha \abs{x}^2}{2}}\,.$$

We set $\beta=0.1$ and test Algorithm \ref{alg:MFC} on this example. We remark that we have a term $\beta\log\rho(t,x)$ in the running cost $F$. Therefore, it is better to compute $l_t = \log\rho(t,z_t)$ instead of $\tl_t = \rho(t,z_t)$ in algorithm \ref{alg:MFC}. Similar to the WPO example, we test our algorithm in $1$, $2$, and $10$ dimensions. The errors are summarized in Table \ref{tab:LQ}, which is similar to the results in Table \ref{tab:WPO}. Our algorithm is able to solve the MFC problem accurately.

\begin{figure}[htbp]
\centering
\includegraphics[width=0.325\textwidth]{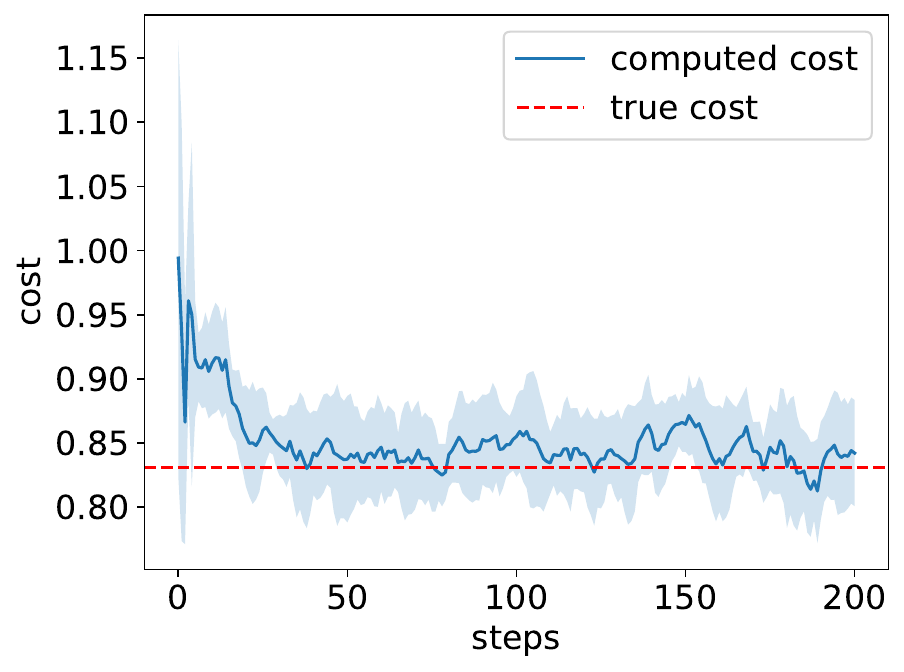}
\includegraphics[width=0.325\textwidth]{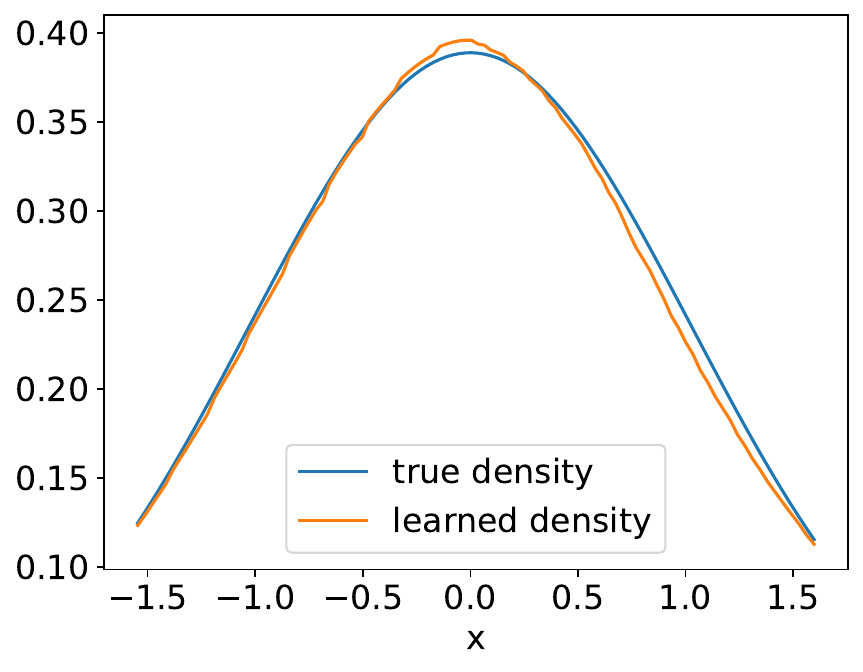}
\includegraphics[width=0.325\textwidth]{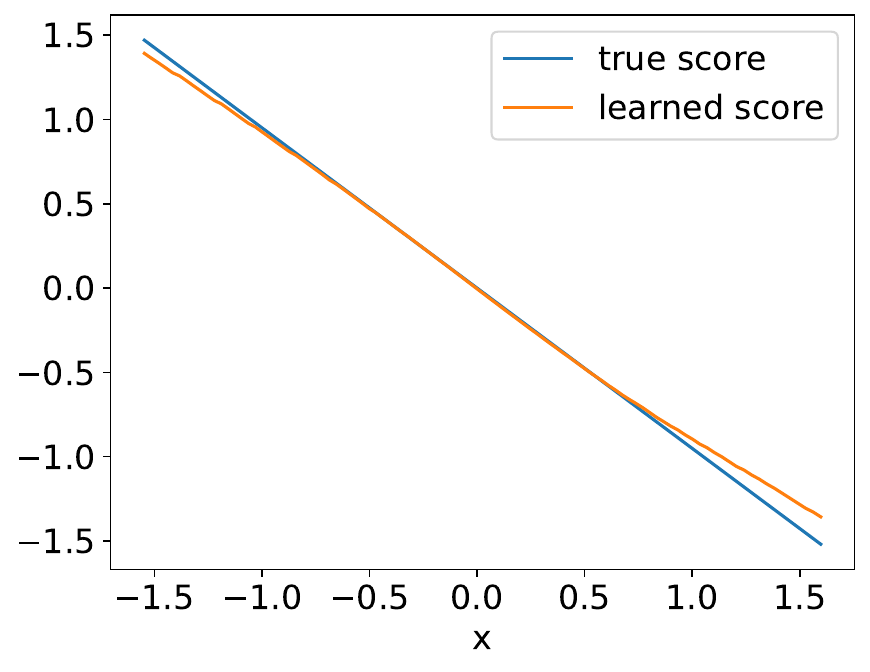}
\includegraphics[width=0.325\textwidth]{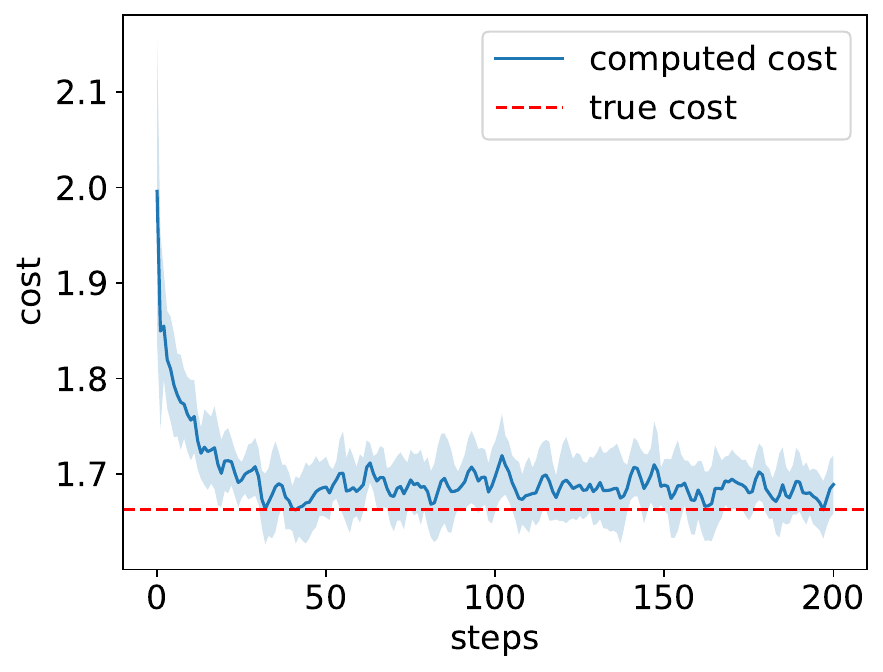}
\includegraphics[width=0.325\textwidth]{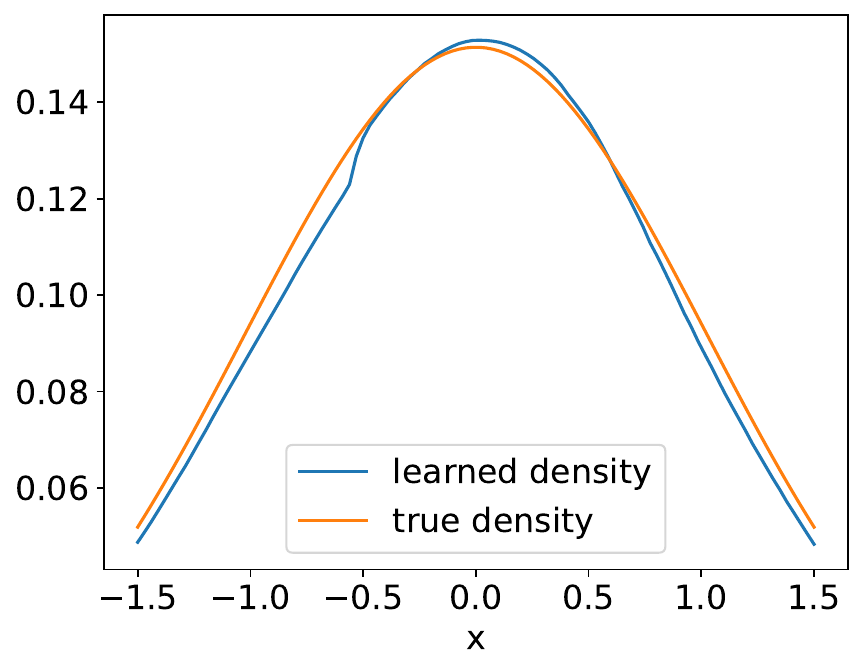}
\includegraphics[width=0.325\textwidth]{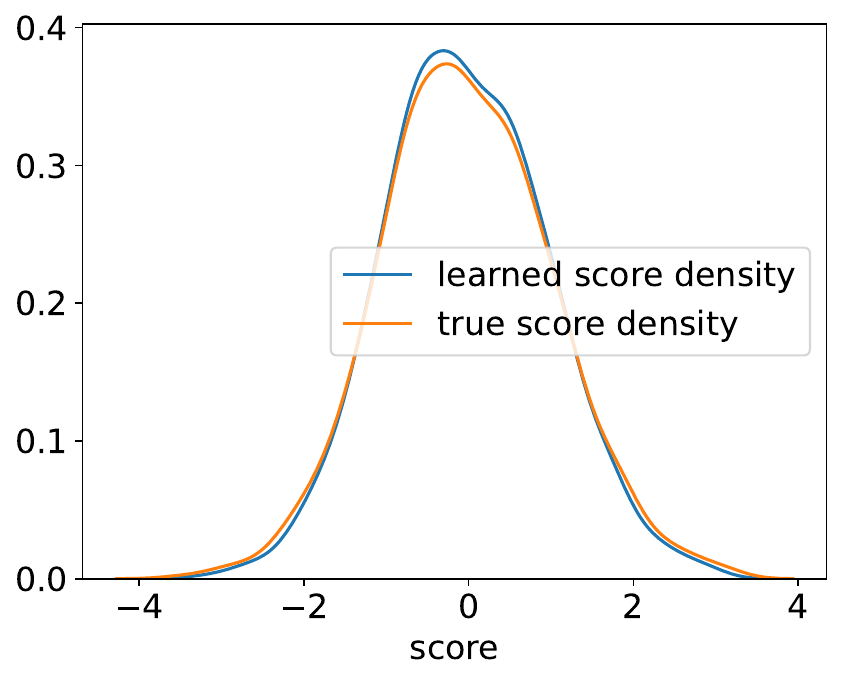}
\includegraphics[width=0.325\textwidth]{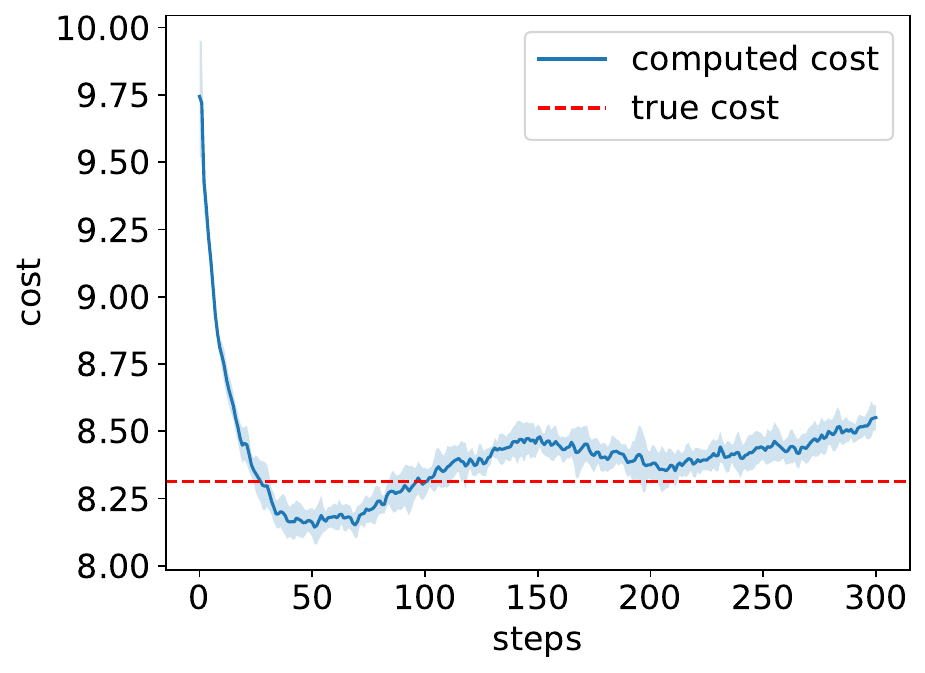}
\includegraphics[width=0.325\textwidth]{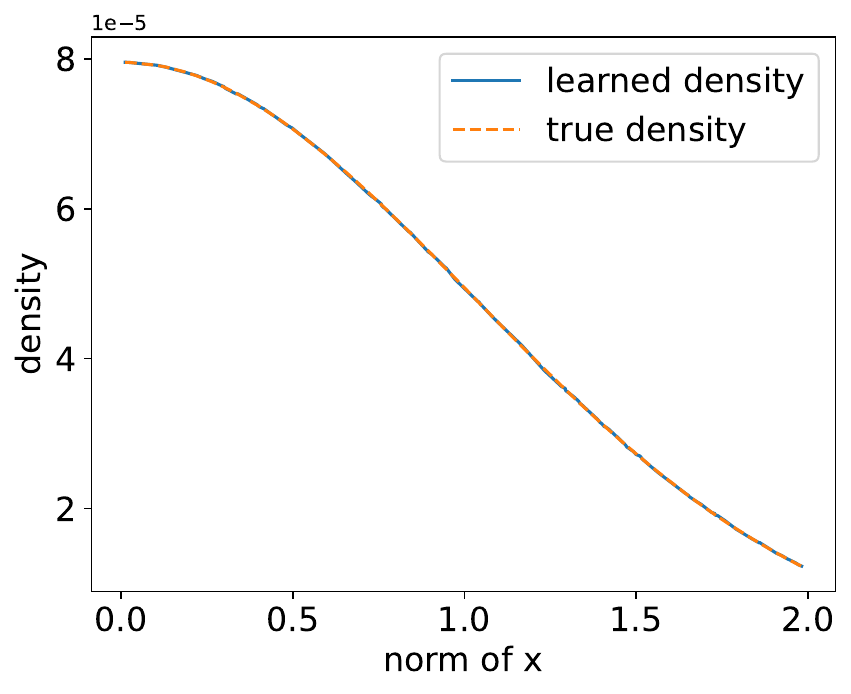}
\includegraphics[width=0.325\textwidth]{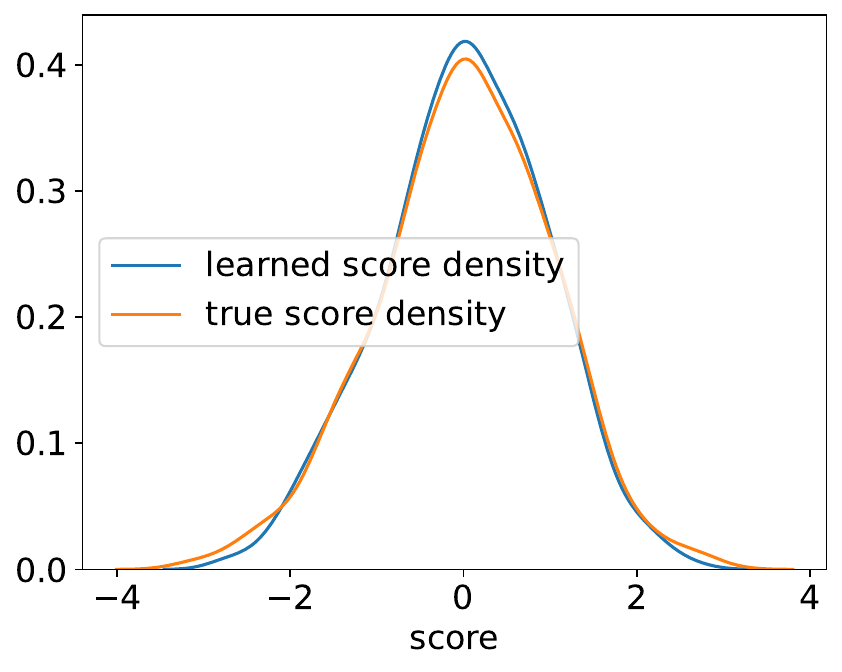}
\caption{Numerical results for the LQ problem. The first, second, and third rows shows the results in $1$, $2$, and $10$ dimensions respectively. First column: cost functional through training. Second column: visualization for the density function. Third column: visualization for the score function. Our algorithm accurately captures the solution to the problem.}
\label{fig:LQ}
\end{figure}

\begin{table}[htb]
\centering
\begin{tabular}{c|cccc}
\midrule
errors &  $\text{err}_{\rho}$ & $\text{err}_{f}$ & $\text{err}_{s}$ & cost gap \\
\midrule
$1d$ & \num{9.91e-3} & \num{2.61e-2} &\num{2.88e-2}  & \num{1.07e-2} \\
$2d$ & \num{7.39e-3} & \num{3.74e-2} &\num{3.58e-2}  & \num{1.97e-2} \\
$10d$ & \num{1.15e-3} & \num{2.49e-2} &\num{2.37e-2}  & \num{1.64e-1} \\
\midrule
\end{tabular}
\caption{Errors for LQ problem.}
\label{tab:LQ}
\end{table}

The numerical results are also presented in Figure \ref{fig:LQ}. The first, second, and third rows show the results in $1$, $2$, and $10$ dimensions, respectively. The first column is the training curve for the cost functional, which is similar to the first column in Figure \ref{fig:WPO_add}. Our algorithm gives the correct objective. The second column visualize the density function at terminal time $t_{\text{end}}$ using the data points $\{ z^{(n)}_{t_{N_t}}, \tl^{(n)}_{t_{N_t}} \}_{n=1}^{N_z}$. In the $1$ dimension (first row), we plot the density and compare it with true densities directly. In $2$ dimensions, we interpolate the data points and plot the density function $x_1 \mapsto \rho(t_{\text{end}}, (x_1,0))$ in the figure. In $10$ dimensions, an interpolation is very hard to obtain, so we plot the density in term of the norm of $x$. Our algorithm captures the density function accurately. The third column visualize the score function at terminal time $t_{\text{end}}$ using the data points $\{ (z^{(n)}_{t_{N_t}}, s^{(n)}_{t_{N_t}}) \}_{n=1}^{N_z}$. Again, we plot the score function directly in $1$ dimension. In $2$ and $10$ dimensions, we present a density plot for the first dimension of the score, which gives the probability density function of $\nabla_{z_1} \log\rho(t_{\text{end}}, z_{t_{\text{end}}})$. We observe that our algorithm captures the true density and score functions accurately.

\subsection{Double well potential}\label{sec:doublewell}

In this section we present an MFC example where the terminal cost is a potential function with the double well shape. The formulation is similar to RWPO, with the terminal cost given by
\begin{equation}\label{eq:G_doublewell}
G(x) = c \abs{x-c_1}^2 \abs{x-c_2}^2\,,
\end{equation}
where $c\in\RR^+$ and $c_1, c_2 \in \RR^d$ are the minimizers of $G$. This function $G$ does not depend on the density $\rho$. The initial distribution is still a standard Gaussian distribution, i.e., $z_0 \sim N(0,I_d)$. As mentioned before, such problem will demonstrate a bifurcation phenomenon, which is hard to capture.  We compute this example in $1$, $2$, and $10$ dimensions, and compute all errors in $1$ and $2$ dimensions. We take $c=\frac14$ in \eqref{eq:G_doublewell} and set two centers as an all $-1$ vector for $c_1$ and an all $1$ vector for $c_2$.

\textbf{Computing the reference solution.}
Unlike the example in section \ref{sec:WPO}, we do not have an explicit solution. But fortunately, we are able to obtain a reference solution using the kernel formula proposed in \cite{li2023kernel} (equation (10), (11), and (14)). According to their derivations, the optimal density function is given by
\begin{equation}\label{eq:WPO_rho}
\begin{aligned}
\rho(t, x)=& \left(4 \pi \gamma \frac{t(t_{\text{end}}-t)}{t_{\text{end}}}\right)^{-\frac{d}{2}} \cdot\\
& \int_{\mathbb{R}^d} \int_{\mathbb{R}^d} \frac{\exp\sqbra{-\frac{1}{2 \gamma}\left(G(z)+\frac{|x-z|^2}{2(t_{\text{end}}-t)}+\frac{|x-y|^2}{2 t}\right)}}{\int_{\mathbb{R}^d} \exp\sqbra{-\frac{1}{2 \gamma}\left(G(\tilde{y})+\frac{|y-\tilde{y}|^2}{2 t_{\text{end}}}\right)} \,\rd \tilde{y}} \rho(0, y) \,\rd y \,\rd z \,.
\end{aligned}
\end{equation}
Specifically, the density function as terminal time $t_{\text{end}}$ is 
\begin{equation}\label{eq:WPO_rhoT}
\rho(t_{\text{end}},x) = \int_{\RR^d} \frac{\exp\sqbra{-\frac{1}{2 \gamma}\left(G(x)+\frac{|x-y|^2}{2 t_{\text{end}}}\right)}}{\int_{\mathbb{R}^d} \exp\sqbra{-\frac{1}{2 \gamma}\left(G(z)+\frac{|z-y|^2}{2 t_{\text{end}}}\right)} \,\rd z} \rho(0,y) \,\rd y \,.
\end{equation}
In addition, the solution to the classic HJB equation (cf. \eqref{eq:HJB_phi}) is given by
\begin{equation}\label{eq:WPO_phi}
\phi(t, x)=2 \gamma \log \left(\int_{\mathbb{R}^d} (4 \pi \gamma(t_{\text{end}}-t))^{-\frac{d}{2}} \exp\sqbra{-\frac{1}{2 \gamma}\left(G(y)+\frac{|x-y|^2}{2(t_{\text{end}}-t)}\right)} \,\rd y\right)\,,
\end{equation}
with a terminal condition $\phi(T,x) = - G(x)$. With these expressions, we are able to obtain the score function via $\nx \log \rho(t,x) = \nx \rho(t,x) / \rho(t,x)$. Then, by Corollary \ref{coro:LQ} and \eqref{eq:psi_def}, the optimal velocity is
\begin{equation}\label{eq:optimal_f}
f(t,x) = \nx \phi(t,x) - \gamma \nx \log \rho(t,x) \,.
\end{equation}
However, when $t \in (0,t_{\text{end}})$, the numerical implementation for $\rho(t,x)$ and $\nx\rho(t,x)$ through \eqref{eq:WPO_rho} involves three nested integrations in $\RR^d$, which could potentially result in large errors. Therefore, we instead consider the errors at $t=0$ and $t_{\text{end}}$, where all expressions are relatively easy. At $t=0$, by \eqref{eq:optimal_f}, we have
\begin{equation}\label{eq:WPO_f0}
f(0,x) = \dfrac{\int_{\mathbb{R}^d} \frac{x-y}{t_{\text{end}}} \exp\sqbra{-\frac{1}{2 \gamma}\left(G(y)+\frac{|x-y|^2}{2t_{\text{end}}}\right)} \,\rd y}{\int_{\mathbb{R}^d} \exp\sqbra{-\frac{1}{2 \gamma}\left(G(z)+\frac{|x-z|^2}{2t_{\text{end}}}\right)} \,\rd z} - \gamma \dfrac{\nx\rho_0(x)}{\rho_0(x)}\,.
\end{equation}
At $t=t_{\text{end}}$, we can compute the score function through
\begin{equation}\label{eq:WPO_scoreT}
\nx \log \rho(t_{\text{end}},x) = \dfrac{-\int_{\RR^d} \frac{ -\frac{1}{2\gamma} \parentheses{\nx G(x) + \frac{x-y}{t_{\text{end}}}} \exp\sqbra{-\frac{1}{2 \gamma}\left(G(x)+\frac{|x-y|^2}{2 t_{\text{end}}}\right)}}{\int_{\mathbb{R}^d} \exp\sqbra{-\frac{1}{2 \gamma}\left(G(z)+\frac{|z-y|^2}{2 t_{\text{end}}}\right)} \,\rd z} \rho(0,y) \,\rd y}{\int_{\RR^d} \frac{\exp\sqbra{-\frac{1}{2 \gamma}\left(G(x)+\frac{|x-y|^2}{2 t_{\text{end}}}\right)}}{\int_{\mathbb{R}^d} \exp\sqbra{-\frac{1}{2 \gamma}\left(G(z)+\frac{|z-y|^2}{2 t_{\text{end}}}\right)} \,\rd z} \rho(0,y) \,\rd y} \,.
\end{equation}
Also, the terminal velocity is given by
\begin{equation}\label{eq:WPO_fT}
f(t_{\text{end}},x) = -\nx G(x) - \gamma \nx \log \rho(t_{\text{end}},x) \,.
\end{equation}

Next, we apply the Riemann sum to approximate the expressions \eqref{eq:WPO_f0}, \eqref{eq:WPO_scoreT}, and \eqref{eq:WPO_fT} to obtain a reference solution in $1$ and $2$ dimensions. First, we define the following function
\begin{equation}\label{eq:WPO_h}
h(y) = \int_{\mathbb{R}^d} \exp\sqbra{-\frac{1}{2 \gamma}\left(G(z)+\frac{|z-y|^2}{2 t_{\text{end}}}\right)} \,\rd z \,,
\end{equation}
which has appeared multiple times. We approximated $h(y)$ numerically from the Riemann sum. In $1$ dimension, we use the trapezoid rule with step size $\Delta z = 0.01$ to approximate the integrations and truncate the integration within $z\in [-6,6]$ when we compute the reference solution. In $2$ dimensions, similarly, we use a box with size $\Delta z_1 \times \Delta z_2 = 0.01 \times 0.01$ for Riemann sum and truncate the integration within the box $z \in [-6,6] \times [-6,6]$. We compute the value of $h(y)$ on all grid points within $[-4,4]$ in $1$ dimension and $[-4,4] \times [-4,4]$ in $2$ dimensions, and store the values. In this way, integrations w.r.t. $z$ in \eqref{eq:WPO_f0} and \eqref{eq:WPO_scoreT} are simulated numerically. Next, we further apply Riemann sum for integrations w.r.t. $y$ with the same step size, truncated for $y \in [-4,4] \, \text{or} \, [-4,4] \times [-4,4]$. In this way, we obtain the function for all $x$ on the grid points. The two bounds $6$ and $4$, and the step size $0.01$, are chosen such that the Riemann sums provide reasonable reference solutions for the problem. Finally, we make an interpolation in the dimensions $1$ or $2$, giving us the desired reference solution.

\medskip
\noindent
\textbf{Results in $1$ dimension.}
The errors for $1$ dimensional example are $\text{err}_{\rho} = \num{1.17e-2}$, $\text{err}_{f_0} = \num{8.56e-3}$, $\text{err}_{f_{\text{end}}} = \num{2.39e-2}$, and $\text{err}_{s} = \num{1.65e-1}$. The numerical results are illustrated in Figure \ref{fig:WPONG1d}. The two plots in the first row show the velocity fields at $t=0$ and $t=t_{\text{end}}=1$, which nicely approximate the true velocities. In the second row, the figure on the left compares the reference terminal density and the histogram of particles $z_{t_{\text{end}}}$, which match well. The plot in the middle shows the evolution of the density, which is obtained using data points $\{ (z_{t_j}^{(n)} , \tl_{t_j}^{(n)}) \}_{j=0,n=1}^{N_t,\,N_z}$. These red lines coincide with the blue dashed lines, which are the true densities. This result demonstrates that our algorithm is able to capture the evaluation from a simple distribution to a complicated double well distribution accurately. The figure on the right plots the score function using data points $\{ (z_{t_{N_t}}^{(n)} , \tl_{t_{N_t}}^{(n)}) \}_{n=1}^{N_z}$. It successfully captures the S-shape of the true score function.

\begin{figure}[htbp]
\centering
\includegraphics[width=0.35\textwidth]{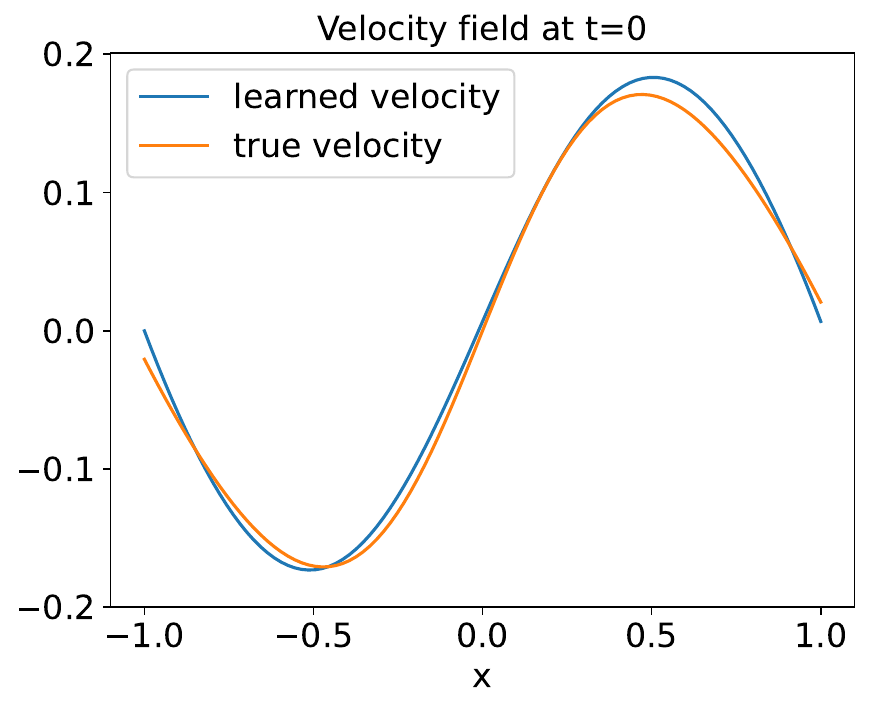}
\includegraphics[width=0.35\textwidth]{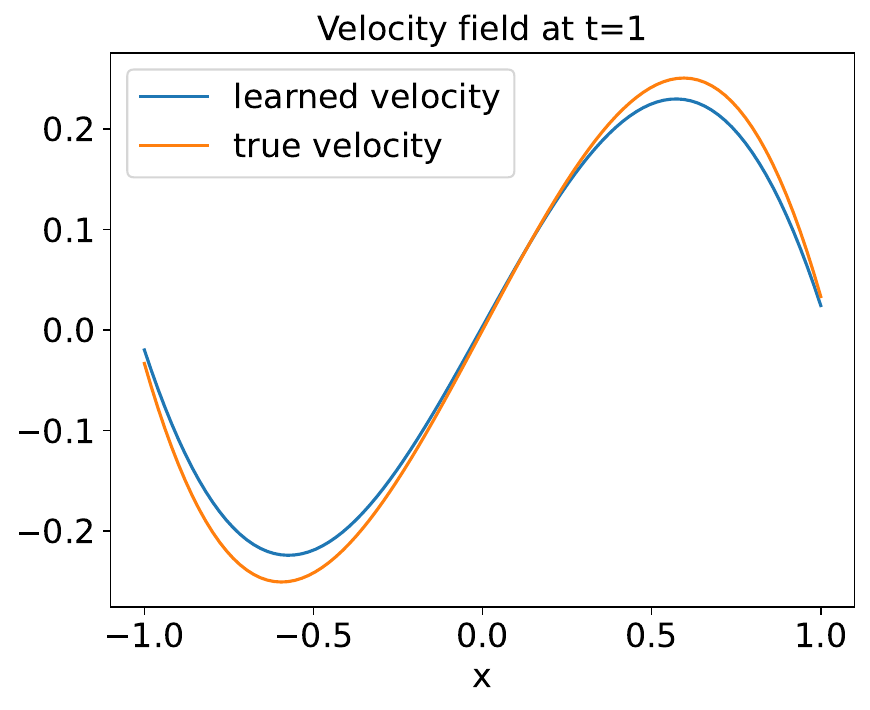}
\includegraphics[width=0.35\textwidth]{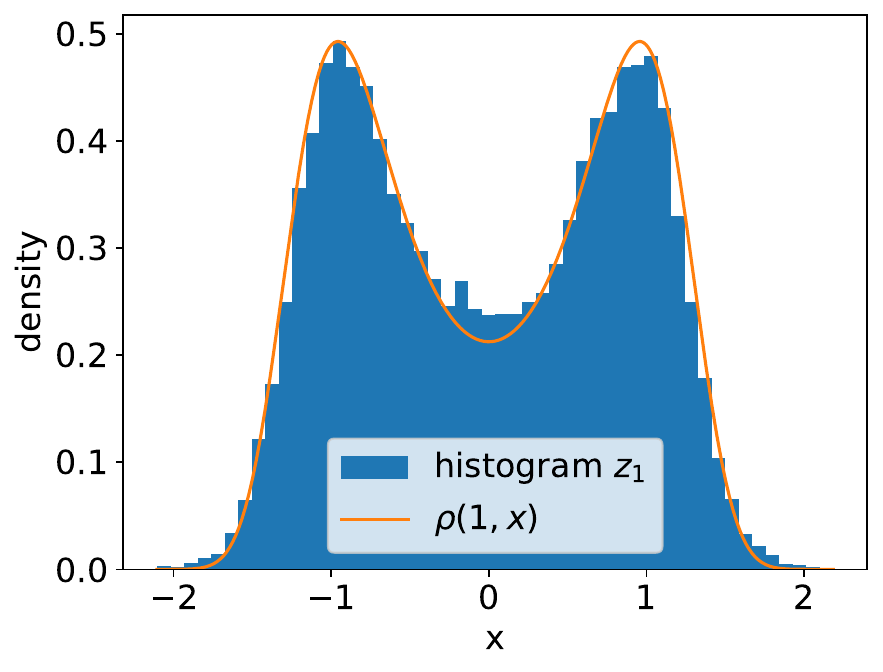}
\includegraphics[width=0.28\textwidth]{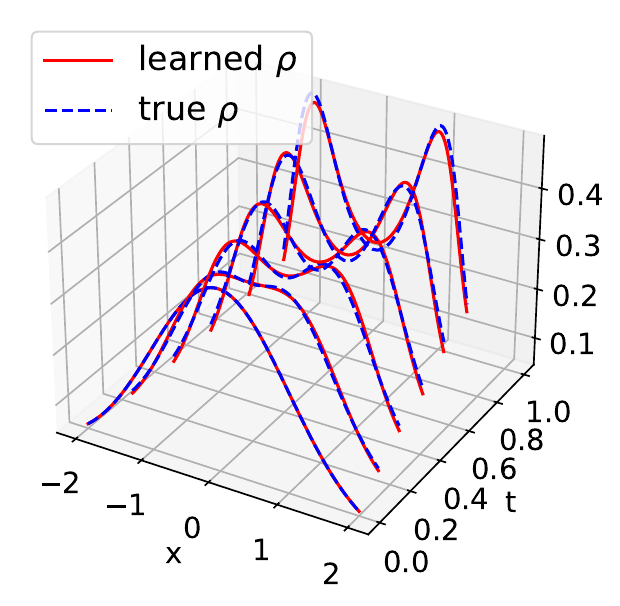}
\includegraphics[width=0.35\textwidth]{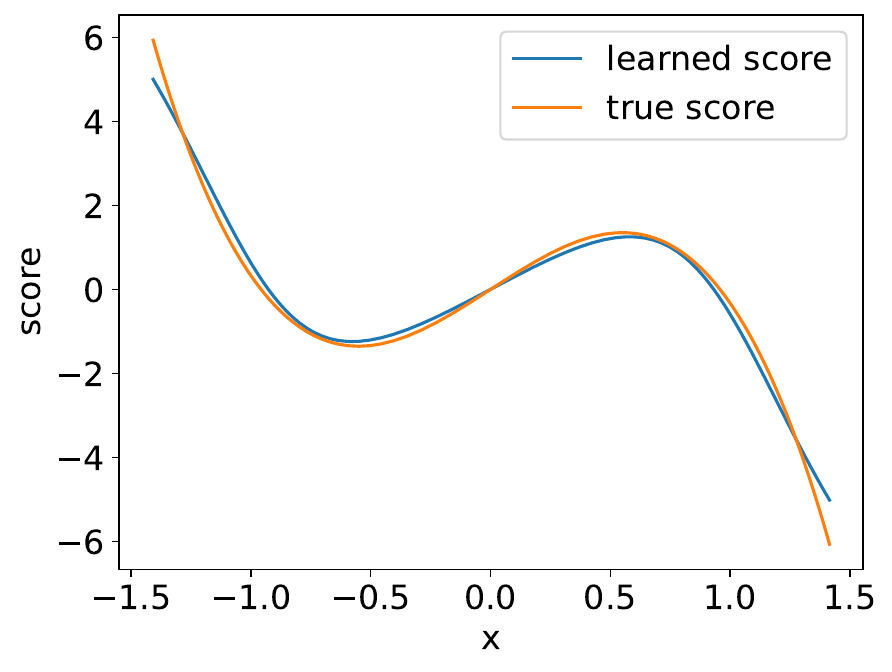}
\caption{double well 1d. Left: true density and particle histogram at $t_{\text{end}}$; middle: density evolution compared with true density; right: score function at $t_{\text{end}}$.}
\label{fig:WPONG1d}
\end{figure}

\medskip
\noindent
\textbf{Results in $2$ dimensions.}
It is hard to compute the nested integrations in \cite{li2023kernel}'s paper using Riemann sums in $2$ dimensions. Therefore, instead of computing $\text{err}_f$ in \eqref{eq:three_errors}, we compute the errors for the velocity field at $t=0$ and $t=t_{\text{end}}=1$ using the expressions in \eqref{eq:err_f0T}. The errors are $\text{err}_\rho = \num{2.62e-2}$, $\text{err}_{f_0} = \num{5.11e-2}$, $\text{err}_{f_{\text{end}}} = \num{4.77e-2}$, and $\text{err}_s = \num{2.4e-1}$. 

The results are also shown in Figure \ref{fig:WPONG2d} and \ref{fig:WPONG2d2}. Our learned network successfully captures the velocity field $f$ and the score function which is shown in Figure \ref{fig:WPONG2d}. The first and second rows show each dimension of the velocity field at $t=0$ and $t=t_{\text{end}}=1$ and compare them with the true values. The third row shows the score function. The approximated score function is plotted using the data $\{(z^{(n)}_{t_{N_t}}, s^{(n)}_{t_{N_t}})\}_{n=1}^{N_z}$, where we made a non-uniform $2d$ interpolation using the build-in function in Scipy. The reference solution is obtained through differentiation of equations (10) (11) and (14) given in \cite{li2023kernel} and then apply Riemann sum approximation. We observe that our algorithm correctly captures the velocity field and the score function.

The left plot in Figure \ref{fig:WPONG2d2} shows the scattered points at terminal time $t=1$, which concentrate at the two wells of the potential. We also add the level sets of $\rho(1,\cdot)$ at $\{0.01,0.05,0.1,0.2,0.4,0.6\}$ for better visualization. Note that the expression for $\rho(1,\cdot)$ is given by \eqref{eq:WPO_rhoT} instead of $\frac1Z\exp(-G(x)/\gamma)$. The plot in the middle shows the particle trajectories under the trained velocity field, which shows a bifurcation phenomenon. 

\begin{figure}[htbp]
\centering
\includegraphics[width=\textwidth]{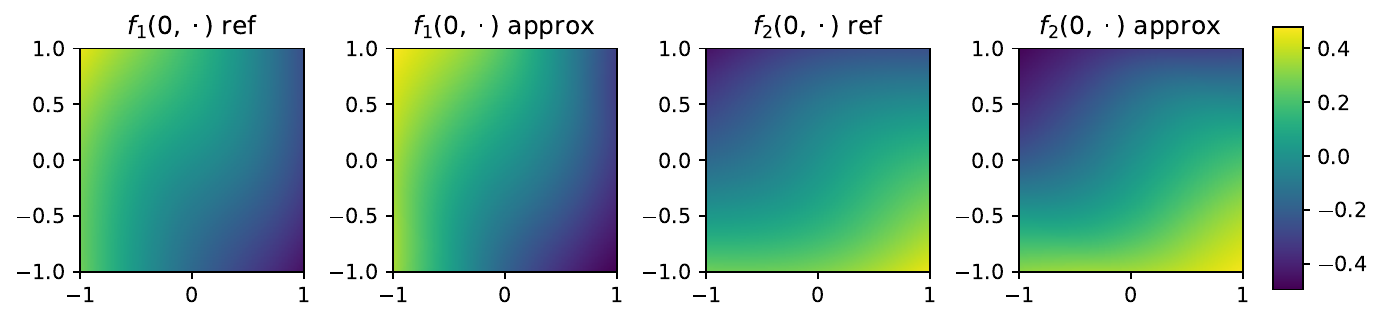}
\includegraphics[width=\textwidth]{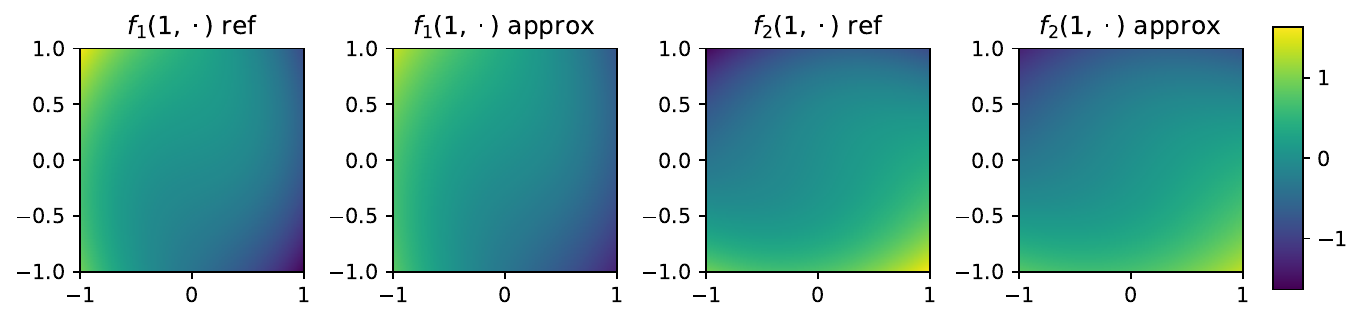}
\includegraphics[width=\textwidth]{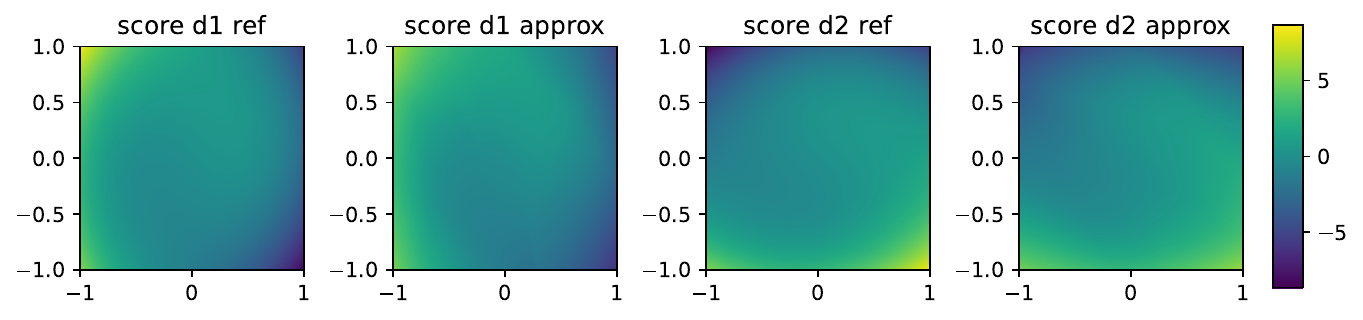}
\caption{Image plot for $f(0,\cdot)$ (first row), $f({t_{\text{end}}},\cdot)$ (second row), and score function $\nx\log\rho({t_{\text{end}}},\cdot)$. Columns one to four represent the true value in the first dimension, approximated value in the first dimension, true value in the second dimension, and the approximated value in the second dimension. Our algorithm correctly captures the velocity field and score function.}
\label{fig:WPONG2d}
\end{figure}

\begin{figure}[htbp]
\centering
\includegraphics[width=0.33\textwidth]{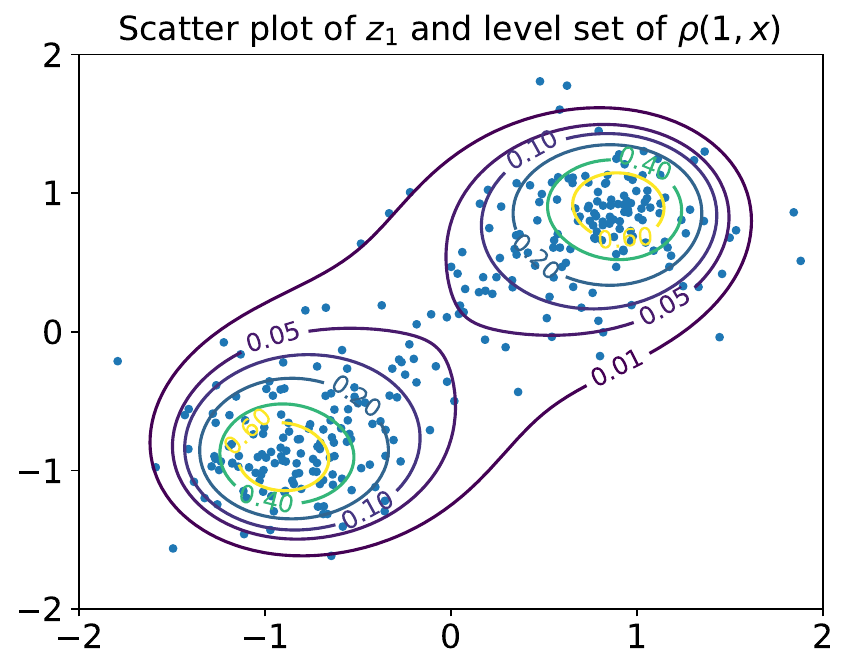}
\includegraphics[width=0.31\textwidth]{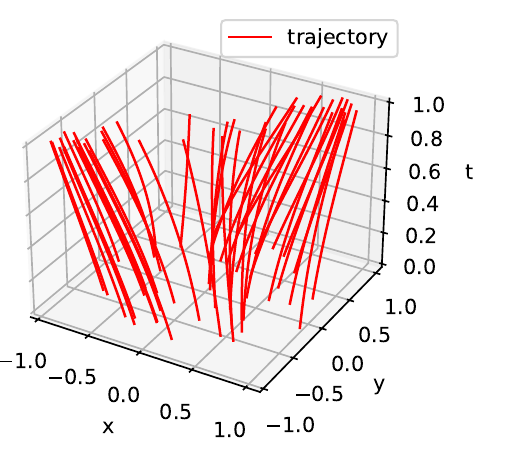}
\includegraphics[width=0.33\textwidth]{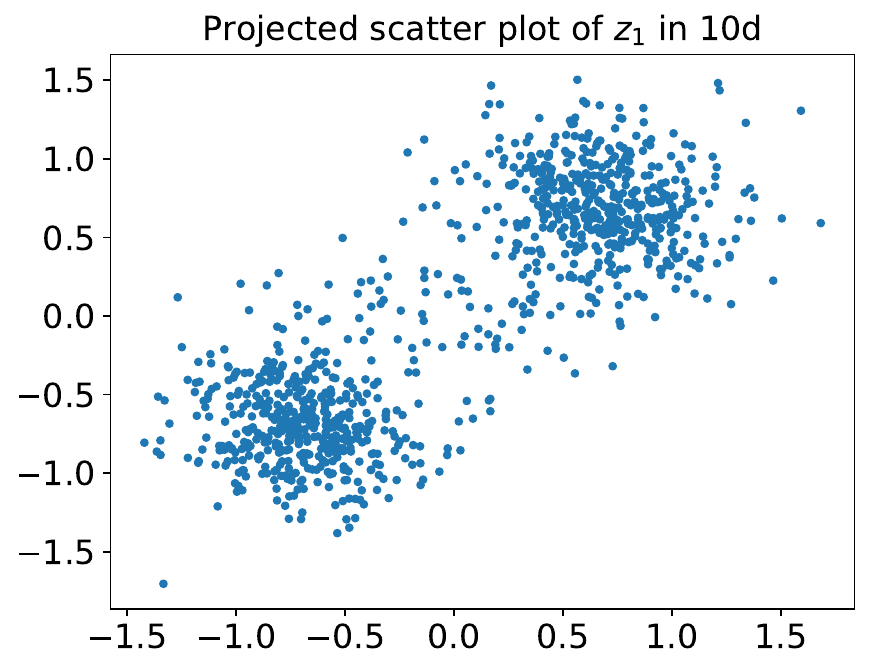}
\caption{Left: scattered plot for particles at $t=t_{\text{end}}$ and the level sets of the density for $2$ dimensional example. Middle: particle trajectory plots for $2$ dimensional example, which demonstrates a bifurcation phenomenon. Right: projected scattered plot of particles at $t=t_{\text{end}}$ for $10$ dimensional example.}
\label{fig:WPONG2d2}
\end{figure}

\medskip
\noindent
\textbf{Results in $10$ dimensions.} Finally, we also test our algorithm in $10$ dimensions. We do not have a reference solution in this case, but we observe that the particles go into two piles, as is shown in the right plot of Figure \ref{fig:WPONG2d2}. In this plot, we project the $10$ dimensional particles into $2$ dimensions for visualization.

\section{Conclusion}
In this paper, we propose a neural ODE system to compute evolutions of first- and second-order score functions along trajectories. The forward Euler discretization of neural ODE system satisfies a system of normalization flows along a deep neural network. We then apply the proposed neural ODE system to solve second-order MFC problems. The effectiveness and accuracy of our method are validated through numerical examples of RWPO problems, score-based flow matching problems for FP equations, LQ problems, and the double well potential problems, providing reassurance of its reliability and effectiveness. 

In future work,  we shall conduct a thorough numerical analysis of the optimal control problem within the neural ODE system. Specifically, for second-order MFC problems, error analysis for the score estimations is necessary. Although we observe some numerical advantages in using the score function to approximate the solution of viscous HJB equation, the underlying error in the current numerical scheme remains unclear. Additionally, exploring neural ODE systems for inverse problems presents an exciting avenue for future research. This direction involves learning and approximating stochastic trajectories from data using the neural ODE system. 

\section*{Acknowledgments}
M. Zhou is partially supported by AFOSR YIP award No.
FA9550-23-1-0087. M. Zhou and S. Osher are partially funded by AFOSR MURI
FA9550-18-502 and ONR N00014-20-1-2787. Wuchen Li is partially supported
by the AFOSR YIP award No. FA9550-23-1-0087, NSF DMS-2245097, and NSF RTG: 2038080.

\appendix
\section{Technical details for score-based normalizing flows and MFC problems}\label{sec:proofs}
We present detailed proofs of all propositions about normalization flows and MFC problems in this section.

\subsection{Proofs for score-based normalizing flow}\label{sec:proof_normalizingflow}
\textbf{Continuity equation for density.}
Let $T(t,x)$ be smooth and $T(t,\cdot)$ be invertible for all $t \ge 0$. Let $f(t,\cdot): \RR^d \to \RR^d$ be the vector field of the state dynamic $z_t$, i.e., $f(t,T(t,x)) = \pt T(t,x)$ for all $t$ and $x$. Then, the probability density function satisfies the continuity equation
\begin{equation}
\pt \rho(t,x) + \nx\cdot(\rho(t,x) f(t,x)) = 0\,.
\end{equation}

\begin{proof}
Using the chain rule, we have
\begin{equation*}
\pt\nx T(t,x) = \nx\pt T(t,x) = \nx f(t,T(t,x)) = \nT f(t,T(t,x)) \, \nx T(t,x)\,,
\end{equation*}
which implies
\begin{equation}\label{eq:trace_estimate}
\Tr\sqbra{\pt\nx T(t,x) \, \nx T(t,x)^{-1}} = \Tr\sqbra{\nT f(t,T(t,x))} = \nT \cdot f(t,T(t,x))\,.
\end{equation}
If we compute the derivative of the MA equation \eqref{eq:MongeAmpere} w.r.t. $t$, we obtain
\begin{align*}
0 &= \dfrac{\rd}{\rd t} \sqbra{\rho(t,T(t,x)) \det\parentheses{\nx T(t,x)}} \\
& = \dfrac{\rd}{\rd t} \rho(t,T(t,x)) \, \det\parentheses{\nx T(t,x)} + \rho(t,T(t,x)) \, \dfrac{\rd}{\rd t} \det\parentheses{\nx T(t,x)} \\
& = \sqbra{\pt \rho(t,T(t,x)) + \nT \rho(t,T(t,x)) \, \pt T(t,x)} \,\det\parentheses{\nx T(t,x)} \\
& \quad + \rho(t,T(t,x)) \Tr\sqbra{\pt\nx T(t,x) \, \nx T(t,x)^{-1}} \,\det\parentheses{\nx T(t,x)} \\
& = \left[\pt \rho(t,T(t,x)) + \nT \rho(t,T(t,x))\tp f(t,T(t,x)) \right. \\
& \quad \left.+ \rho(t,T(t,x)) \nT \cdot f(t,T(t,x)) \right] \cdot \det\parentheses{\nx T(t,x)} \\
& = \sqbra{\pt \rho(t,T(t,x)) + \nT \cdot \parentheses{ \rho(t,T(t,x)) f(t,T(t,x)) }}\cdot \det\parentheses{\nx T(t,x)}\,,
\end{align*}
where we have used \eqref{eq:trace_estimate} in the fourth equality. Since $T(t,\cdot)$ is invertible, $\det\parentheses{\nx T(t,x)} \neq 0$, which implies
$$\pt \rho(t,T(t,x)) + \nT \cdot \parentheses{ \rho(t,T(t,x)) f(t,T(t,x)) } = 0\,,$$
for all $t$ and $x$. Therefore,
\begin{equation}
\pt \rho(t,x) + \nx\cdot(\rho(t,x) f(t,x)) = 0\,.
\end{equation}
\end{proof}

\begin{manualproposition}{1}[Neural ODE system]
Functions $z_t$, $l_t$, $s_t$, and $H_t$ satisfy the following ODE dynamics.
\begin{align*}
\pt z_t &= f(t,z_t)\,, \\
\pt l_t &= -\nz\cdot f(t,z_t)\,, \\
\pt s_t &= -\nz f(t,z_t)\tp s_t - \nz(\nz\cdot f(t,z_t))\,,\\
\pt H_t &= -\sum_{i=1}^d s_{it} \nz^2 f_i(t,z_t) - \nz^2(\nz\cdot f(t,z_t))  - H_t \nz f(t,z_t) -  \nz f(t,z_t) \tp H_t\,, 
\end{align*}
where $s_{it}$ and $f_i$ are the $i$-th component of $s_t$ and $f$ respectively. Also, $\tl_t=\rho(t,z_t)$ satisfies
$$\pt \tl_t = -\nz\cdot f(t,z_t) \, \tl_t\,.$$
\end{manualproposition}
\begin{proof}
The dynamic for $z_t$ \eqref{eq:zt_dynamic} comes from the definition directly:
$$\pt z_t = \pt T(t,z_0) = f(t,T(t,z_0)) = f(t,z_t)\,.$$
We next show \eqref{eq:Lt_dynamic} and \eqref{eq:lt_dynamic}.
\begin{align*}
& \quad \pt \tl_t = \dfrac{\rd}{\rd t} \rho(t,z_t) = \pt \rho(t,z_t) + \nz \rho(t,z_t)\tp \, \pt z_t = \pt \rho(t,z_t) + \nz \rho(t,z_t)\tp \, f(t,z_t) \\
&  = - \nz \cdot [f(t,z_t) \, \rho(t,z_t)] + \nz \rho(t,z_t)\tp \, f(t,z_t) = - \nz \cdot f(t,z_t) \, \rho(t,z_t) = - \nz \cdot f(t,z_t) \, \tl_t\,.
\end{align*}
where we have used the continuity equation \eqref{eq:continuity} in the fourth equality. Consequently,
$$\pt l_t = \dfrac{\rd}{\rd t} \rho(t,z_t)/\rho(t,z_t) = -\nz \cdot f(t,z_t) = - \nz \cdot f(t,z_t)\,.$$

We next show \eqref{eq:st_dynamic}. We will use the fact frequently that $\nz\log \rho(t,z_t) = \nz \rho(t,z_t) / \rho(t,z_t)$. We first compute $\dfrac{\rd}{\rd t} (\nz \rho(t,z_t))$
\begin{equation}\label{eq:dtnzp}
\begin{aligned}
& \quad \dfrac{\rd}{\rd t} \sqbra{\nz \rho(t,z_t)} = (\pt\nz) \rho(t,z_t) + \nz^2 \rho(t,z_t) \, \pt z_t  \\
& = \nz \pt \rho(t,z_t) + \nz^2 \rho(t,z_t) \, f(t,z_t) \\
& = - \nz \sqbra{\nz \cdot \parentheses{\rho(t,z_t) \, f(t,z_t)}} + \nz^2 \rho(t,z_t) \, f(t,z_t) \\
& = - \nz \sqbra{\nz \rho(t,z_t)\tp \, f(t,z_t) + \rho(t,z_t) \, \nz\cdot f(t,z_t)} + \nz^2 \rho(t,z_t) \, f(t,z_t) \\
&= - \nz f(t,z_t)\tp\, \nz \rho(t,z_t)  -(\nz\cdot f(t,z_t)) \, \nz \rho(t,z_t) - \rho(t,z_t) \, \nz \parentheses{\nz \cdot f(t,z_t)}\,.
\end{aligned}
\end{equation}

Therefore,
\begin{align*}
& \quad \pt s_t = \dfrac{\rd}{\rd t} \sqbra{\nz \log \rho(t,z_t)} = \dfrac{\rd}{\rd t} \sqbra{\nz \rho(t,z_t) / \rho(t,z_t)} \\
& = \dfrac{\rd}{\rd t} \parentheses{\nz \rho(t,z_t)} / \rho(t,z_t) - \nz \rho(t,z_t) \,\dfrac{\rd}{\rd t} \parentheses{\rho(t,z_t)} / \rho(t,z_t)^2 \\
& = -\nz f(t,z_t)\tp \, \nz \log \rho(t,z_t) -\nz\cdot f(t,z_t) \, \nz \log \rho(t,z_t) \\
& \quad  - \nz \parentheses{\nz \cdot f(t,z_t)} + \nz \log \rho(t,z_t) \, \nz\cdot f(t,z_t) \\
& = -\nz f(t,z_t)\tp \, \nz \log \rho(t,z_t) - \nz \parentheses{\nz \cdot f(t,z_t)} \\
& = -\nz f(t,z_t)\tp \, s_t - \nz \parentheses{\nz \cdot f(t,z_t)}\,,
\end{align*}
where we have used \eqref{eq:dtnzp} and \eqref{eq:Lt_dynamic} in the third inequality.

Finally we prove \eqref{eq:Ht_dynamic}. In order to simplify notation, we will omit $(t,z_t)$ when there is no confusion and rewrite $\rho(t,z_t)$ and $f(t,z_t)$ as $\rho$ and $f$. We also denote $\partial_i$ as the partial derivative w.r.t. the $i$-th variable in $z$. We first compute $\dfrac{\rd}{\rd t} \nz^2 \rho(t,z_t)$.
\begin{equation}\label{eq:dtnzp2}
\begin{aligned}
& \quad \dfrac{\rd}{\rd t} \nz^2 \rho = \nz^2 \pt \rho + \nz^3 \rho \cdot f = - \nz^2 [\nz \cdot(\rho f)] + \nz^3 \rho \cdot f \\
& = - \nz^2 [\nz \rho \tp f + \rho  \, \nz\cdot f] + \nz^3 \rho  \cdot f \\
& = - \nz^2 \sqbra{\sum_{i=1}^d \partial_i \rho  \, f_i + \rho  \, \nz\cdot f} + \nz^3 \rho  \cdot f \\
& = -\sum_{i=1}^d \parentheses{\nz^2 \partial_i \rho  \, f_i + \partial_i \rho  \, \nz^2 f_i  + \nz \partial_i \rho  \, \nz f_i\tp +\nz f_i \, \nz \partial_i \rho \tp}  + \nz^3 \rho  \cdot f\\
&\quad - \parentheses{\nz^2 \rho  \,  (\nz\cdot f) + \rho \, \nz^2(\nz\cdot f) + \nz \rho  \, \nz (\nz\cdot f)\tp + \nz(\nz\cdot f) \, \nz \rho \tp}\\
& = - \sum_{i=1}^d \partial_i \rho  \, \nz^2 f_i - \nz^2 \rho  \,\nz f - (\nz f)\tp \nz^2 \rho  - \nz^2 \rho \, (\nz\cdot f) - \rho  \nz^2(\nz\cdot f) \\
& \quad - \nz \rho  \,\nz(\nz\cdot f)\tp - \nz(\nz\cdot f) \nz \rho \tp\,.
\end{aligned}
\end{equation}
We also have
\begin{equation}\label{eq:nz2_logp}
\nz^2 \log(\rho) = \nz \parentheses{\dfrac{\nz \rho }{\rho }} = \dfrac{\nz^2 \rho }{\rho } - \dfrac{\nz \rho \,\nz \rho \tp}{\rho ^2}\,,
\end{equation}
which implies
\begin{equation}\label{eq:nz2p_overp}
\dfrac{\nz^2 \rho }{ \rho } = H_t + s_ts_t\tp\,.
\end{equation}
Dividing \eqref{eq:dtnzp2} by $\rho$, we obtain
\begin{equation}\label{eq:dtnz2p_overp}
\begin{aligned}
& \quad \dfrac{\frac{\rd}{\rd t} \nz^2 \rho }{ \rho } \\
&= - \sum_{i=1}^d \dfrac{\partial_i \rho }{ \rho } \, \nz^2 f_i - \dfrac{\nz^2 \rho }{ \rho } \,\nz f - (\nz f)\tp \dfrac{\nz^2 \rho }{ \rho } - \dfrac{\nz^2 \rho }{ \rho }\, (\nz\cdot f)  \\
&\quad -\nz^2(\nz\cdot f) - \dfrac{\nz \rho }{ \rho } \,\nz(\nz\cdot f)\tp - \nz(\nz\cdot f) \dfrac{\nz \rho \tp}{ \rho } \\
&= -\sum_{i=1}^d s_{it} \nz^2f_i - \parentheses{H_t + s_ts_t\tp} \nz f - (\nz f)\tp\parentheses{H_t + s_ts_t\tp}  \\
&\quad - (\nz\cdot f) \parentheses{H_t + s_ts_t\tp} - \nz^2(\nz\cdot f) - s_t \nz(\nz\cdot f)\tp - \nz(\nz\cdot f) s_t\tp\,,
\end{aligned}
\end{equation}
where we have used equation \eqref{eq:nz2p_overp} in the second equality.
Finally, we get
\begin{equation*}
\begin{aligned}
& \quad \pt H_t = \dfrac{\rd}{\rd t} \nz^2 \log \rho  = \dfrac{\rd}{\rd t} \parentheses{\dfrac{\nz^2 \rho }{\rho } - \dfrac{\nz \rho \,\nz \rho \tp}{\rho ^2}}\\
& = \dfrac{\frac{\rd}{\rd t} \nz^2 \rho }{ \rho } - \dfrac{\nz^2 \rho  \frac{\rd}{\rd t} \rho }{ \rho ^2} - \dfrac{\parentheses{\frac{\rd}{\rd t}\nz \rho } \nz \rho \tp + \nz \rho  \parentheses{\frac{\rd}{\rd t}\nz \rho }\tp}{ \rho ^2} + \dfrac{2\nz \rho  \, \nz \rho \tp \, \frac{\rd}{\rd t} \rho }{ \rho ^3} \\
& = -\sum_{i=1}^d s_{it} \nz^2f_i - \parentheses{H_t + s_ts_t\tp} \nz f - (\nz f)\tp\parentheses{H_t + s_ts_t\tp} - (\nz\cdot f) \parentheses{H_t + s_ts_t\tp} \\
& \quad - \nz^2(\nz\cdot f) - s_t \nz(\nz\cdot f)\tp - \nz(\nz\cdot f) s_t\tp + \parentheses{H_t + s_ts_t\tp} (\nz\cdot f) \\
& \quad + \parentheses{\nz f\tp s_t + (\nz\cdot f) \, s_t + \nz(\nz\cdot f)}s_t\tp \\
& \quad + s_t \parentheses{\nz f\tp s_t +  (\nz\cdot f) \, s_t + \nz(\nz\cdot f)}\tp - 2s_ts_t\tp (\nz\cdot f)\\
& = - \sum_{i=1}^d s_{it} \nz^2 f_i - \nz^2 (\nz\cdot f) - H_t \nz f - (\nz f)\tp H_t\,.
\end{aligned}
\end{equation*}
We have used \eqref{eq:nz2_logp} in the second equality and used \eqref{eq:dtnz2p_overp}, \eqref{eq:nz2p_overp}, \eqref{eq:Lt_dynamic}, and \eqref{eq:dtnzp} in the fourth equality. This finishes the proof.
\end{proof}

\begin{manualproposition}{2}[Information equality]
The following equality holds:
$$\EE\sqbra{\Tr(H_t)} = - \EE\sqbra{|s_t|^2}\,,\quad \textrm{for all $t \ge 0$\,.}$$ 
\end{manualproposition}
\begin{proof}
For all $t\ge0$, we have
\begin{equation}
\begin{aligned}
& \quad \EE \sqbra{\Tr(H_t) + \abs{s_t}^2} = \EE \sqbra{ \nz \cdot \parentheses{\nz \log \rho(t,z_t)} + \abs{\nz \log \rho(t,z_t)}^2} \\
& = \int_{\RR^d} \parentheses{\nx \cdot \parentheses{\nx \log \rho(t,x)} + \abs{\nx \log \rho(t,x)}^2} \rho(t,x) \, \rd x \\
& = \int_{\RR^d}  \sqbra{\nx \cdot \parentheses{\dfrac{\nx \rho(t,x)}{\rho(t,x)}} \rho(t,x) + \dfrac{\abs{\nx \rho(t,x)}^2}{\rho(t,x)^2} \rho(t,x)} \, \rd x \\
& = \int_{\RR^d} \sqbra{- \dfrac{\nx \rho(t,x)\tp}{\rho(t,x)} \nx \rho(t,x) + \dfrac{\abs{\nx \rho(t,x)}^2}{\rho(t,x)}} \,\rd x = 0\,,
\end{aligned}
\end{equation}
where we have used the integration by part in the second last equality.
\end{proof}

\textbf{Covariance evolution of centered Gaussian distributions.}
Let $z_0$ follow a centered Gaussian distribution $N(0,\Sigma(0))$ and $\pt z_t = A(t) z_t$. Then $z_t$ is also a centered Gaussian distribution $N(0,\Sigma(t))$, where the covariance $\Sigma(t)$ satisfies a matrix ODE:
\begin{equation}\label{eq:Sigma_dynamic_apdx}
\pt \Sigma(t) = A(t)\Sigma(t) + \Sigma(t)A(t)\tp\,.
\end{equation}
\begin{proof}
We write down the density function $\rho(t,x)$ for $N(0,\Sigma(t))$
\begin{equation}
\rho(t,x) = (2\pi)^{-\frac{d}{2}} \det(\Sigma(t))^{-\frac12} \exp\parentheses{-\frac12 x\tp \Sigma(t)^{-1} x} \,.
\end{equation}
It is sufficient to check that $\rho(t,x)$ satisfies the transport equation
$$\pt \rho(t,x) + \nx\cdot\parentheses{A(t) x \, \rho(t,x)} = 0\,,$$
if equation \eqref{eq:Sigma_dynamic_apdx} holds. We first compute
\begin{align*}
& \quad \dfrac{\rd}{\rd t} \det(\Sigma(t)) = \det(\Sigma(t)) \Tr\parentheses{\Sigma(t)^{-1} \, \pt\Sigma(t)} \\
& = \det(\Sigma(t)) \Tr\parentheses{\Sigma(t)^{-1} \, (A(t)\Sigma(t) + \Sigma(t)A(t)\tp)} \\
&= \det(\Sigma(t)) \Tr\parentheses{\Sigma(t)^{-1} A(t) \Sigma(t) + A(t)\tp} = 2 \det(\Sigma(t)) \Tr(A(t)) \,,
\end{align*}
which implies
\begin{equation}\label{eq:dt_det}
\begin{aligned}
\dfrac{\rd}{\rd t} \parentheses{\det(\Sigma(t))^{-\frac12}} & = -\frac12 \det(\Sigma(t))^{-\frac32} \, \dfrac{\rd}{\rd t} \det(\Sigma(t)) \\
& = -\det(\Sigma(t))^{-\frac12} \, \Tr(A(t)) \,.
\end{aligned}
\end{equation}
We also have
\begin{equation}
\dfrac{\rd}{\rd t} \parentheses{\Sigma(t)^{-1}} = - \Sigma(t)^{-1} \, \pt\Sigma(t) \, \Sigma(t)^{-1} = - \Sigma(t)^{-1} A(t) - A(t)\tp\Sigma(t)^{-1} \,,
\end{equation}
which implies
\begin{equation}\label{eq:dt_exp}
\begin{aligned}
& \quad \dfrac{\rd}{\rd t} \exp\parentheses{-\frac12 x\tp \Sigma(t)^{-1} x} = -\frac12 x\tp \dfrac{\rd}{\rd t} \parentheses{\Sigma(t)^{-1}} \, x \, \exp\parentheses{-\frac12 x\tp \Sigma(t)^{-1} x}\\
&= \frac12 x\tp \parentheses{\Sigma(t)^{-1} A(t) + A(t)\tp\Sigma(t)^{-1}} x \, \exp\parentheses{-\frac12 x\tp \Sigma(t)^{-1} x} \\
& = x\tp A(t)\tp\Sigma(t)^{-1} x \, \exp\parentheses{-\frac12 x\tp \Sigma(t)^{-1} x} \,.
\end{aligned}
\end{equation}
Combining \eqref{eq:dt_det} and \eqref{eq:dt_exp}, we obtain
\begin{equation}\label{eq:dt_rho}
\begin{aligned}
\pt \rho(t,x) &= (2\pi)^{-\frac{d}{2}} \, \dfrac{\rd}{\rd t}\parentheses{\det(\Sigma(t))^{-\frac12}} \exp\parentheses{-\frac12 x\tp \Sigma(t)^{-1} x}\\
& \quad  + (2\pi)^{-\frac{d}{2}} \det(\Sigma(t))^{-\frac12} \, \dfrac{\rd}{\rd t}\exp\parentheses{-\frac12 x\tp \Sigma(t)^{-1} x} \\
& = \rho(t,x) \sqbra{ -\Tr(A(t)) + x\tp  A(t)\tp\Sigma(t)^{-1} x } \,.
\end{aligned}
\end{equation}
Finally, we obtain 
\begin{equation}
\begin{aligned}
& \quad \nx\cdot\parentheses{A(t) x \, \rho(t,x)} = \Tr(A(t))\, \rho(t,x) + x\tp A(t)\tp \, \nx\rho(t,x) \\
& = \Tr(A(t))\, \rho(t,x) + x\tp A(t)\tp \parentheses{- \Sigma(t)^{-1} x} \rho(t,x) = - \pt \rho(t,x) \,,
\end{aligned}
\end{equation}
which finishes the proof.
\end{proof}

\subsection{Details for the MFC problems}

We recall the formulation of the MFC problem
\begin{equation}
\begin{aligned}
\inf_v \int_0^{t_{\text{end}}} & \int_{\RR^d} \sqbra{L(t,x,v(t,x)) + F(t,x,\rho(t,x))} \rho(t,x) \,\rd x \,\rd t \\
+ & \int_{\RR^d} G(x,\rho(t_{\text{end}},x)) \rho(t_{\text{end}},x) \,\rd x\,,
\end{aligned}
\end{equation}
where the density $\rho(t,x)$ satisfies the FP equation with a given initialization
\begin{equation}
\pt \rho(t,x) + \nabla_x\cdot(\rho(t,x)v(t,x))=\gamma \Delta_x \rho(t,x)\,,\quad \rho(0,x)=\rho_0(x)\,.
\end{equation}
This FP equation has a stochastic characterization
\begin{equation}\label{eq:xt_SDE}
\rd x_t = v(t,x_t) \, \rd t + \sqrt{2\gamma} \, \rd W_t\,,
\end{equation}
where $W_t$ is a standard Brownian motion. For the well-posedness of the problem, we assume that the running cost $L$ is strongly convex in $v$, then the Hamiltonian, i.e. the Legendre transform (convex conjugate) of $L$, is well defined, given by 
\begin{equation}\label{eq:Hamiltonian}
H(t,x,p) = \sup_{v \in \RR^d} v\tp p - L(t,x,v)\,.
\end{equation}
According to standard results in convex analysis, $\Dv L$ and $\Dp H$ are the inverse functions to each other, in the sense that
\begin{equation}\label{eq:inverse_LvHp}
\begin{aligned}
\Dp H(t,x,\Dv L(t,x,v)) &= v\,,\quad \forall v\,, \\
\Dv L(t,x, \Dp H(t,x,p)) &= p\,, \quad \forall p\,.
\end{aligned}
\end{equation}

We recall that the composed velocity is
\begin{equation}
f(t,x) = v(t,x) - \gamma \nx \log \rho(t,x)\,,
\end{equation}
and the probability flow $z_t$ is given by
\begin{equation}
\pt z_t = f(t,z_t)\,.
\end{equation}

Note that the stochastic dynamic $x_t$ and the probability flow $z_t$ share the same probability distribution, while they are different dynamics. This $z_t$ serves as a characteristic line for the system. Along this line, a forward Euler scheme gives $\mO(\Delta t)$ error, while a direct discretization of the stochastic dynamic has an error term $\mO(\sqrt{\Delta t})$.

Next, we present the an analysis for the modified HJB equation of MFC problem, tailored for the probability flow dynamic.

\begin{manualproposition}{3}
Let $L$ be strongly convex in $v$, then solution to the MFC problem \eqref{eq:MFC_modified} is as follows. Consider a function $\psi\colon [0, t_{\textrm{end}}]\times\mathbb{R}^d\rightarrow\mathbb{R}$, such that 
\begin{equation}
f(t,x) = \Dp H(t,x,\nx \psi(t,x) + \gamma \nx \log\rho(t,x)) - \gamma \nx\log\rho(t,x),
\end{equation}
where the density function $\rho(t,x)$ and $\psi\colon [0,t_{\text{end}}]\times \mathbb{R}^d\rightarrow\mathbb{R}$ satisfy the following system of equations
\begin{equation}
\left\{\begin{aligned}
&\pt\rho(t,x) + \nabla_x\cdot \parentheses{\rho(t,x) \Dp H}=\gamma \Delta_x \rho(t,x),\\
&\pt \psi(t,x) + \nx\psi(t,x)\tp \Dp H - \gamma \nx\cdot\Dp H  + \gamma \Delta_x \psi(t,x) - L(t,x,\Dp H)  \\
& \quad - \widetilde{F}(t,x,\rho(t,x)) + 2\gamma^2\Delta_x\log\rho(t,x) + \gamma^2\abs{\nx\log\rho(t,x)}^2 = 0,\\
& \rho(0,x)=\rho_0(x),\quad \psi(t_{\text{end}},x)=-\widetilde{G}(x,\rho(t_{\text{end}},x))-\gamma\log\rho(t_{\text{end}},x), 
\end{aligned}\right.
\end{equation}
Here, $\Dp H$ is short for $\Dp H(t,x,\nx \psi(t,x) + \gamma \nx \log\rho(t,x))$, $\widetilde{F}(t,x,\rho) =\pd{}{\rho} (F(t,x,\rho) \rho) =\pd{F}{\rho}(t,x,\rho) \rho + F(t,x,\rho)$, and $\widetilde{G}(x,\rho) = \pd{G}{\rho}(x,\rho)\rho + G(x,\rho)$.
\end{manualproposition}
\begin{proof}
We start by adding a Lagrange multiplier $\phi(t,x)$ and obtain the augmented objective
\begin{align}
& \int_0^{t_{\text{end}}} \int_{\RR^d} \left[ \parentheses{L(t,x,f(t,x) + \gamma \nx \log\rho(t,x))+ F(t,x,\rho(t,x))} \rho(t,x)  \right. \\
& \left. \quad + \phi(t,x) \parentheses{\pt \rho(t,x) + \nx\cdot(f(t,x) \rho(t,x))} \right] \,\rd x \,\rd t \\
& \quad + \int_{\RR^d} G(x,\rho(t_{\text{end}},x)) \rho(t_{\text{end}},x) \,\rd x \\
= & \int_0^{t_{\text{end}}} \int_{\RR^d} \left[ \parentheses{L(t,x,f(t,x) + \gamma \nx \log\rho(t,x))+ F(t,x,\rho(t,x))} \rho(t,x) \right. \\
& \left. \quad - \pt \phi(t,x) \rho(t,x) - \nx\phi(t,x)\tp f(t,x) \rho(t,x) \right] \,\rd x \,\rd t  \label{eq:augment_obj}\\
& \quad + \int_{\RR^d} (G(x,\rho(t_{\text{end}},x)) + \phi(t_{\text{end}},x)) \rho(t_{\text{end}},x) \,\rd x - \int_{\RR^d} \phi(0,x) \rho(0,x) \,\rd x.
\end{align}
Taking the variation of \eqref{eq:augment_obj} w.r.t. $\rho(t_{\text{end}},\cdot)$, we get $\phi(t_{\text{end}},x) = -\widetilde{G}(x,\rho(t_{\text{end}},x))$. Taking the variation w.r.t. $f$, we obtain
$$\Dv L(t,x,f(t,x) + \gamma \nx \log\rho(t,x)) = \nx\phi(t,x)\,,$$
which implies
\begin{equation}\label{eq:f_DpH_score}
f(t,x) + \gamma \nx \log\rho(t,x) = \Dp H(t,x, \nx\phi(t,x))\,,
\end{equation}
due to \eqref{eq:inverse_LvHp}. Next, we denote $\Dp H(t,x, \nx\phi(t,x))$ by $\Dp H$ for short. We then observe that 
$$\Dv L := \Dv L(t,x,f(t,x) + \gamma \nx \log\rho(t,x)) = \Dv L(t,x, \Dp H) = \nx\phi(t,x).$$ 
Taking the variation of \eqref{eq:augment_obj} w.r.t. $\rho(\cdot,\cdot)$, we get 
\begin{align}
0 &= - \gamma \nx\cdot\sqbra{ \Dv L \, \rho(t,x)} / \rho(t,x) + L(t,x,f(t,x) + \gamma \nx \log\rho(t,x)) \\
& \quad  + \widetilde{F}(t,x,\rho(t,x)) - \pt\phi(t,x) - \nx\phi(t,x)\tp f(t,x) \\
& = - \gamma \nx\cdot \Dv L - \gamma \Dv L\tp \nx\log\rho(t,x)  + L(t,x,\Dp H) \\
& \quad + \widetilde{F}(t,x,\rho(t,x)) - \pt\phi(t,x) - \nx\phi(t,x)\tp \parentheses{\Dp H - \gamma \nx \log\rho(t,x)} \\
& = - \gamma\Delta_x \phi(t,x) +  L(t,x,\Dp H) + \widetilde{F}(t,x,\rho(t,x)) \\
& \quad - \pt\phi(t,x) - \nx\phi(t,x)\tp \Dp H\,, \label{eq:HJB_phi}
\end{align}
where we have used $\Dv L = \nx\phi(t,x)$ in the last inequality. Equation \eqref{eq:HJB_phi} is the classical HJB equation corresponding to the MFC problem, which coincide  with equation (4.6) in \cite{bensoussan2013mean} after a sign flip, where our $\Dp H$ is their $\hat{v}$. Please note that, throughout our derivation, $\rho(t,x)$ is not a general density, but the density under the optimal control.

We next present the modified HJB equation. We define
\begin{equation}\label{eq:psi_def}
\psi(t,x) := \phi(t,x) - \gamma \log\rho(t,x)\,.
\end{equation}
This definition seems to complicate the system, but eventually gives us an elegant characterization, especially in the case of LQ problem (see Corollary \ref{coro:LQ}). We then observe that
\begin{equation}\label{eq:pt_rho}
\begin{aligned}
& \quad \pt \log\rho(t,x) = \dfrac{\pt\rho(t,x)}{\rho(t,x)} = - \dfrac{\nx\cdot(\rho(t,x)f(t,x))}{\rho(t,x)} \\
& = - \nx\cdot f(t,x) - \nx\log\rho(t,x)\tp f(t,x) \\
& = - \nx\cdot\Dp H + \gamma\Delta_x\log\rho(t,x) - \nx\log\rho(t,x)\tp \Dp H + \gamma\abs{\nx\log\rho(t,x)}^2,
\end{aligned}
\end{equation}
where the last equality is due to \eqref{eq:f_DpH_score}. Therefore, the HJB equation \eqref{eq:HJB_phi} is transformed into
\begin{equation*}
\begin{aligned}
0 & = - \gamma \Delta_x \psi(t,x) - \gamma^2 \Delta_x\log\rho(t,x) + L(t,x,\Dp H) + \widetilde{F}(t,x,\rho(t,x)) - \pt\psi(t,x) \\
& \quad  - \gamma\pt\log\rho(t,x) - (\nx\psi(t,x) + \gamma \nx\log\rho(t,x))\tp \Dp H \\
& = - \gamma \Delta_x \psi(t,x) - \gamma^2 \Delta_x\log\rho(t,x) + L(t,x,\Dp H) + \widetilde{F}(t,x,\rho(t,x)) - \pt\psi(t,x) \\
& \quad + \gamma \nx\cdot\Dp H - \gamma^2\Delta_x\log\rho(t,x) + \gamma \nx\log\rho(t,x)\tp \Dp H  - \gamma^2 \abs{\nx\log\rho(t,x)}^2 \\
& \quad - (\nx\psi(t,x) + \gamma \nx\log\rho(t,x))\tp \Dp H \\
& = - \pt \psi(t,x) - \nx\psi(t,x)\tp \Dp H + \gamma \nx\cdot\Dp H  - \gamma \Delta_x \psi(t,x) + L(t,x,\Dp H)  \\
& \quad + \widetilde{F}(t,x,\rho(t,x)) - 2\gamma^2\Delta_x\log\rho(t,x) - \gamma^2\abs{\nx\log\rho(t,x)}^2.
\end{aligned}
\end{equation*}
Here, $\Dp H$ is short for $\Dp H(t,x,\nx \psi(t,x) + \gamma \nx \log\rho(t,x))$. We plug in the shift \eqref{eq:psi_def} in the first equality; we plug in \eqref{eq:pt_rho} in the second equality. Therefore, we have recovered Proposition \ref{prop:HJB}.
\end{proof}

\begin{manualcorollary}{1}
In the LQ problem where $L(t,x,v) = \frac12 |v|^2$ and $F(t,x,\rho)=0$, the solution of the MFC problem \eqref{eq:MFC_modified} is characterized by
$$f(t,x) = \nx\psi(t,x)\,,$$
and
\begin{equation}
\left\{\begin{aligned}
&\pt\rho(t,x) + \nabla_x\cdot \parentheses{\rho(t,x) \nx\psi(t,x)}=0\,,\\
&\pt \psi(t,x) + \frac12 \abs{\nx\psi(t,x)}^2 + \gamma^2 \Delta_x\log\rho(t,x) + \frac12 \gamma^2 \abs{\nx\log\rho(t,x)}^2 = 0\,,\\
& \rho(0,x)=\rho_0(x)\,,\quad \psi(t_{\text{end}},x)=-\widetilde{G}(x,\rho(t_{\text{end}},x))-\gamma\log\rho(t_{\text{end}},x)\,.
\end{aligned}\right.
\end{equation}
If we further define the Fisher information as $I[\rho] := \int_{\RR^d} \abs{\nx\log\rho(x)}^2 \rho(x) \,\rd x$, then the equation for $\psi$ becomes
$$\pt \psi(t,x) + \frac12 \abs{\nx\psi(t,x)}^2 - \frac12 \gamma^2 \fd{I[\rho(t,\cdot)]}{\rho(t,\cdot)}(x) = 0\,.$$
\end{manualcorollary}
\begin{proof}
When $L(t,x,v) = \frac12 |v|^2$, we have $\Dv L(t,x,v) = v$ and $\Dp H(t,x,p) = p$. So
$$\Dp H(t,x,\nx \psi(t,x) + \gamma \nx \log\rho(t,x)) = \nx \psi(t,x) + \gamma \nx \log\rho(t,x)\,.$$
Hence, the equation for $\psi$ becomes
\begin{equation*}
\begin{aligned}
0 & = \pt \psi(t,x) + \nx\psi(t,x)\tp \parentheses{\nx \psi(t,x) + \gamma \nx \log\rho(t,x)} - \gamma \Delta_x \psi(t,x)\\
& \quad - \gamma^2 \Delta_x \log\rho(t,x)  + \gamma \Delta_x \psi(t,x) - \frac12\abs{\nx \psi(t,x) + \gamma \nx \log\rho(t,x)}^2  \\
& \quad + 2\gamma^2\Delta_x\log\rho(t,x) + \gamma^2\abs{\nx\log\rho(t,x)}^2 \\
& = \pt \psi(t,x) + \frac12 \abs{\nx\psi(t,x)}^2 + \gamma^2 \Delta_x\log\rho(t,x) + \frac12 \gamma^2 \abs{\nx\log\rho(t,x)}^2.
\end{aligned}
\end{equation*}
A direct computation gives
$$\fd{I[\rho]}{\rho}(x) = -2\Delta_x\log\rho(x) - \abs{\nx\log\rho(x)}^2,$$
which implies
$$\pt \psi(t,x) + \frac12 \abs{\nx\psi(t,x)}^2 - \frac12 \gamma^2 \fd{I[\rho(t,\cdot)]}{\rho(t,\cdot)}(x) = 0\,.$$
\end{proof}

\subsection{Formulation for flow matching of overdamped Langevin dynamic}\label{sec:flow_matching_OU}
In this section, we present the flow matching problem of overdamped Langevin dynamic
\begin{equation}\label{eq:Langexin}
\rd X_t = b(t,X_t) \,\rd t + \sqrt{2\gamma} \,\rd W_t\,, \quad \quad X_0 \sim \rho_0\,.
\end{equation}
Here we assume that the drift is the negative gradient of some potential function
$$b(t,x) = -\nx V(x)\,,$$
so that the stationary distribution for the overdamped Langevin dynamic is proportional to $\exp(-V(x)/\gamma)$. The goal for flow matching is to learn a velocity field $v(x)$ that matches $b(t,x)$. One minimizes the objective functional
\begin{equation}\label{eq:FM_cost}
\inf_v \int_0^{t_{\text{end}}} \int_{\RR^d} \frac12 \abs{v(t,x)-b(t,x)}^2 \rho(t,x) \, \rd x \, \rd t\,,
\end{equation}
subject to the FP equation
\begin{equation}\label{eq:FM_FP}
\pt \rho(t,x) + \nx \cdot (\rho(t,x) v(t,x)) = \gamma \Delta_x \rho(t,x)\,,\quad  \rho(0,x)=\rho_0(x)\,.
\end{equation}
The following corollary gives the exact formulation for the solution.

\begin{coro}\label{coro:flow_matching}
For flow matching problem of overdamped Langevin dynamic where $L(t,x,v) = \frac12 |v - b(t,x)|^2$, $F(t,x,\rho)=0$ and $G(x,\rho)=0$, the solution of the MFC problem \eqref{eq:MFC_modified} is characterized by
$$f(t,x) = \nx\psi(t,x) + b(t,x)\,,$$
and
\begin{equation}\label{eq:HJB_FM}
\left\{\begin{aligned}
&\pt\rho(t,x) + \nabla_x\cdot \parentheses{\rho(t,x) (\nx\psi(t,x) + b(t,x))}=0\,,\\
&\pt \psi(t,x) + \frac12 \abs{\nx\psi(t,x)}^2 + \nx\psi(t,x)\tp b(t,x) - \gamma \nx\cdot b(t,x) \\
& \hspace{0.6in}+ \gamma^2 \Delta_x\log\rho(t,x) + \frac12 \gamma^2 \abs{\nx\log\rho(t,x)}^2 = 0\,,\\
& \rho(0,x)=\rho_0(x)\,,\quad \psi(t_{\text{end}},x)=-\gamma\log\rho(t_{\text{end}},x)\,.
\end{aligned}\right.
\end{equation}
\end{coro}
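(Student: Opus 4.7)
The plan is to invoke Proposition 3 directly with the specific choices $L(t,x,v)=\tfrac12|v-b(t,x)|^2$, $F\equiv 0$, $G\equiv 0$, and then simplify. The only nontrivial work is computing the Hamiltonian explicitly and then grinding through the cancellations that arise when substituting into the modified HJB equation.

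First I would compute the Hamiltonian. Since $L$ is strongly convex and quadratic in $v$, the sup in $H(t,x,p)=\sup_v(v^\top p - \tfrac12|v-b(t,x)|^2)$ is attained at $v = p + b(t,x)$, giving
\begin{equation*}
H(t,x,p) = \tfrac12|p|^2 + b(t,x)^\top p,
\qquad
\mathrm{D}_p H(t,x,p) = p + b(t,x).
\end{equation*}
Evaluating at $p = \nabla_x\psi(t,x) + \gamma\nabla_x\log\rho(t,x)$ and substituting into the formula $f(t,x) = \mathrm{D}_p H - \gamma\nabla_x\log\rho$ from Proposition 3, the $\gamma\nabla_x\log\rho$ terms cancel and yield $f(t,x)=\nabla_x\psi(t,x)+b(t,x)$, which in turn gives the continuity equation stated in the corollary. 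The terminal condition is immediate: since $G\equiv 0$ implies $\widetilde G\equiv 0$, the boundary condition from Proposition 3 collapses to $\psi(t_{\text{end}},x)=-\gamma\log\rho(t_{\text{end}},x)$.

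The main step is to verify the PDE for $\psi$. I would denote $q := \nabla_x\psi + \gamma\nabla_x\log\rho$ so that $\mathrm{D}_p H = q + b$, and then substitute into
\begin{equation*}
\pt\psi + \nabla_x\psi^\top \mathrm{D}_p H - \gamma\nabla_x\!\cdot \mathrm{D}_p H + \gamma\Delta_x\psi - L(t,x,\mathrm{D}_p H) + 2\gamma^2\Delta_x\log\rho + \gamma^2|\nabla_x\log\rho|^2 = 0,
\end{equation*}
using $\widetilde F\equiv 0$. Expanding $\nabla_x\psi^\top(q+b)$, $\gamma\nabla_x\!\cdot(q+b) = \gamma\Delta_x\psi+\gamma^2\Delta_x\log\rho+\gamma\nabla_x\!\cdot b$, and $L(t,x,q+b)=\tfrac12|q|^2 = \tfrac12|\nabla_x\psi|^2 + \gamma\nabla_x\psi^\top\nabla_x\log\rho + \tfrac12\gamma^2|\nabla_x\log\rho|^2$, the $\gamma\Delta_x\psi$ terms cancel, the cross terms $\gamma\nabla_x\psi^\top\nabla_x\log\rho$ cancel, and the $\Delta_x\log\rho$ and $|\nabla_x\log\rho|^2$ coefficients collapse to $\gamma^2$ and $\tfrac12\gamma^2$ respectively. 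What remains is precisely
\begin{equation*}
\pt\psi + \tfrac12|\nabla_x\psi|^2 + \nabla_x\psi^\top b - \gamma\nabla_x\!\cdot b + \gamma^2\Delta_x\log\rho + \tfrac12\gamma^2|\nabla_x\log\rho|^2 = 0,
\end{equation*}
which is the claimed equation in \eqref{eq:HJB_FM}.

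The only potential obstacle is bookkeeping: it is easy to lose a sign or miscount the coefficient on $\Delta_x\log\rho$ because both the Hamiltonian term and the viscosity correction $+2\gamma^2\Delta_x\log\rho$ contribute. I would therefore carry out the substitution term by term and group contributions by which quantity ($|\nabla_x\psi|^2$, cross term, $\Delta_x\psi$, $\Delta_x\log\rho$, $|\nabla_x\log\rho|^2$, drift $b$) they affect, confirming each cancellation before writing down the final simplified form.
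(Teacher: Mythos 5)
Your proposal is correct and follows exactly the paper's approach: compute the Hamiltonian $H(t,x,p)=\tfrac12|p|^2+b(t,x)^\top p$ and $\Dp H(t,x,p)=p+b(t,x)$, then substitute into Proposition~\ref{prop:HJB}. The paper states the conclusion tersely ("plugging these expressions into Proposition~\ref{prop:HJB}, we recover the results"); you simply carry out the term-by-term cancellation explicitly, which checks out.
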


\begin{proof}
Through direct computation, the convex conjugate of $L$ is
$$H(t,x,p) = \frac12 |p|^2 + p\tp b(t,x)\,,$$
and $\Dp H(t,x,p) = p + b(t,x)$. Plugging these expressions into Proposition \ref{prop:HJB}, we recover the results for this corollary.
\end{proof}

\section{Details for numerical implementation}\label{sec:numerics_details}

\subsection{HJB regularizer for MFC}\label{sec:regularizer}

In addition to the numerical algorithm in section \ref{sec:numeric_alg}, we can add the residual of the HJB equation \eqref{eq:FP_HJB} into the loss function. First, let us consider the HJB regularizer in the LQ Gaussian case, as presented in Corollary \ref{coro:LQ}. Here the Gaussian case means that $\rho_0$ is also given as a Gaussian distribution, in which the solution of density function in LQ MFC problem stays in a time dependent Gaussian distribution. 

In the LQ scenario, parametrizing the vector field $f$ directly as a neural network complicates the system, as terms like $\pt\psi$ in \eqref{eq:HJB_FP_LQ} are hard to implement. A more feasible approach is to parametrize $\psi$ as a neural network, allowing the vector field $f$ to be computed as $f(t,x) = \nx \psi(t,x)$.

This parametrization, however, requires two additional orders of auto-differentiations compared to the original algorithm. First, if $\psi$ is parametrized as a neural network as in \eqref{eq:f_NN}, then $f(t,x) = \nx \psi(t,x)$ must be obtained through auto-differentiation. Second, the term $\Delta_x \rho(t,x)$ in \eqref{eq:HJB_FP_LQ} requires computing the dynamic for $H_t$ through \eqref{eq:Ht_discrete}, which requires another order of derivative. This term $H_t$ is not required in the original Algorithm \ref{alg:MFC}. These two additional differentiations could potentially complicate the optimization landscape.

Therefore, in order to make a clean and fair comparison in studying the effect of the HJB regularizer, we consider the specific parametrization for $\psi$, given by
\begin{equation}\label{eq:psi_parametrization}
\psi(t,x) = \frac12 x\tp A(t) x + B(t)\tp x + C(t)\,,
\end{equation}
where $A(t): [0,t_{\text{end}}] \to \RR^{d\times d}$, $B(t): [0,t_{\text{end}}] \to \RR^d$, and $C(t): [0,t_{\text{end}}] \to \RR$. These three functions can be further parametrized into trainable variables $\theta = [\theta^A, \theta^B,\theta^C]$ with $\theta^A \in \RR^{(N_t+1) \times d \times d}_{\text{sym}}$, $\theta^B \in \RR^{(N_t+1) \times d}$, and $\theta^C \in \RR^{N_t+1}$ after time discretizations. I.e., we only evaluate $\psi$ when $t = t_j$ ($j=0,1,\ldots,N_t$). Here, $\theta^A \in \RR^{(N_t+1) \times d \times d}_{\text{sym}}$ means that each slice of matrix $\theta^A_j \in \RR^{d\times d}$ is symmetric. 
This parametrization is different from \eqref{eq:f_NN}, but the numerical implementation for the ODE discretization \eqref{eq:zt_discrete}-\eqref{eq:Ht_discrete} and loss cost \eqref{eq:cost_numeric} remain unchanged, with the vector field defined by 
\begin{equation}\label{eq:f_LQ}
f(t_j,x;\theta) = \nx \psi(t,x;\theta) = \theta^A_j x + \theta^B_j\,.
\end{equation}
In the LQ example with Gaussian distribution, the solution is indeed of this form; see \cite{bensoussan2013mean} Chapter 6 for details.

With such parametrizations, we can compute the residual of the HJB equation through \eqref{eq:psi_regularizer} as a regularizer. Here, the term $\pt \psi(t,z_t)$ in \eqref{eq:psi_regularizer} could be approximated using the finite difference scheme. In this work, we use the central difference scheme
\begin{equation}\label{eq:pt_psi}
\begin{aligned}
& \quad \pt \psi(t_j, x; \theta) \approx \dfrac{1}{2 \Delta t} \parentheses{\psi(t_{j+1}, x; \theta) - \psi(t_{j-1}, x; \theta)} \\
& = \dfrac{1}{2 \Delta t} \parentheses{\frac12 x\tp (\theta^A_{j+1} - \theta^A_{j-1}) x + (\theta^B_{j+1} - \theta^B_{j-1})\tp x + (\theta^C_{j+1} - \theta^C_{j-1})}\,.
\end{aligned}
\end{equation}
With this discretization, the numerical residual of the HJB equation at $t_j$ (cf. \eqref{eq:psi_regularizer}) is given by
\begin{equation}\label{eq:loss_regtj}
\begin{aligned}
\mL_{\text{HJB}t_j} &= \dfrac{1}{N_z} \sum_{n=1}^{N_z} \left[ \dfrac{1}{2 \Delta t} \parentheses{\psi(t_{j+1}, z^{(n)}_{t_j}; \theta) - \psi(t_{j-1}, z^{(n)}_{t_j}; \theta)}  \right.\\
& \hspace{0.4in} \left. + \frac12 \abs{\nz \psi(t_j,z^{(n)}_{t_j};\theta)}^2+ \gamma^2 \parentheses{\Tr\parentheses{H^{(n)}_{t_j}} + \frac12 \abs{s^{(n)}_{t_j}}^2} \right]\,, 
\end{aligned}
\end{equation}
and the total regularization loss is
\begin{equation}\label{eq:loss_HJB}
\mL_{\text{HJB}} = \sum_{j=1}^{N_t-1} \abs{\mL_{\text{HJB}t_j}} \Delta t\,.
\end{equation}
Finally, we can minimize $\mL_{\text{total}} = \mL_{\text{cost}} + \lambda\mL_{\text{HJB}}$ w.r.t. $\theta$ to solve the MFC problem, where $\lambda>0$ is a weight parameter. We summarize the regularized method in \ref{sec:pseudocode} Algorithm \ref{alg:MFC_HJB}.

\medskip
\noindent
\textbf{Regularized algorithm for flow matching of overdamped Langevin dynamics.}
This regularized algorithm could be extended to a more general setting, such as the flow matching problem for an OU process. The formulation for this problem is presented in \ref{sec:flow_matching_OU}.

We consider the flow matching of OU process with a linear drift function $b(t,x) = -ax$ and an initial Gaussian distribution $N(\mu(0),\Sigma(0))$ where $\mu(0) \in \RR^d$ and $\Sigma(0) \in \RR^{d\times d}_{\text{sym}}$. Then the stochastic state dynamic is given by
\begin{equation}\label{eq:OU}
\rd x_t = -a x_t \, \rd t + \sqrt{2\gamma} \, \rd W_t\,, \quad\quad x_0 \sim N(\mu(0),\Sigma(0))\,.
\end{equation}
Standard calculations inform us that $x_t \sim (\mu(t), \Sigma(t))$ remains to be Gaussian, where the mean and variance evolve as
\begin{equation}\label{eq:OU_mean}
\mu(t) = \exp(-at) \, \mu(0)\,,
\end{equation}
and
\begin{equation}\label{eq:OU_variance}
\Sigma(t) = \dfrac{\gamma}{a} I_d+ \parentheses{\Sigma(0) - \dfrac{\gamma}{a}I_d} \exp(-2at)\,.
\end{equation}
This expression provides a reference solution in the numerical test. In this work, we set $a=1$, $\mu(0)$ to be an all-one vector in $\RR^d$, and $\Sigma(0) = 4 I_d$.

Similar to the algorithm presented above, we parametrize $\psi$ through \eqref{eq:psi_parametrization}. Please note that the solution $\psi$ to the MFC problem is given by Corollary \ref{coro:flow_matching}. So, the vector field is
\begin{equation}\label{eq:f_FM}
f(t_j,x;\theta) = \nx \psi(t_j,x;\theta) + b(t_j,x) = \theta^A_j x + \theta^B_j - ax\,.
\end{equation}
Also, the HJB equation along the state trajectory is written as
\begin{equation}\label{eq:psi_regularizer_FM}
\begin{aligned}
& \pt\psi(t,z_t) + \frac12 \abs{\nz \psi(t,z_t)}^2 + \nz\psi(t,z_t)\tp b(t,z_t) \\
& - \gamma \nz\cdot b(t,_t) + \gamma^2\parentheses{\Tr(H_t)+\frac12|s_t|^2}=0\,. 
\end{aligned}
\end{equation}
One can view \eqref{eq:psi_regularizer_FM} as an analog of \eqref{eq:psi_regularizer} in the flow matching example.

The time discretization for the HJB equation is the same as \eqref{eq:pt_psi}, and the residual of the HJB equation at $t_j$ is given by 
\begin{equation}\label{eq:loss_regtj_FM}
\begin{aligned}
\mL_{\text{HJB}t_j} &= \dfrac{1}{N_z} \sum_{n=1}^{N_z} \left[ \dfrac{1}{2 \Delta t} \parentheses{\psi(t_{j+1}, z^{(n)}_{t_j}; \theta) - \psi(t_{j-1}, z^{(n)}_{t_j}; \theta)}  \right.\\
& + \frac12 \abs{\nz \psi(t_j,z^{(n)}_{t_j};\theta)}^2 + \nz \psi(t_j,z^{(n)}_{t_j};\theta)\tp b(t_j,z^{(n)}_{t_j}) \\
& \left. - \gamma \nz\cdot b(t_j,z^{(n)}_{t_j}) + \gamma^2 \parentheses{\Tr\parentheses{H^{(n)}_{t_j}} + \frac12 \abs{s^{(n)}_{t_j}}^2} \right]\,, 
\end{aligned}
\end{equation}
and the total regularization loss is still
\begin{equation}\label{eq:loss_HJB_FM}
\mL_{\text{HJB}} = \sum_{j=1}^{N_t-1} \abs{\mL_{\text{HJB}t_j}} \Delta t \,.
\end{equation}
And the total loss is again $\mL_{\text{total}} = \mL_{\text{cost}} + \lambda\mL_{\text{HJB}}$. We summarize this method in Algorithm \ref{alg:MFC_OU} \ref{sec:pseudocode}.

\subsection{Pseudo-code for algorithms}\label{sec:pseudocode}
We present all pseudo-codes for all algorithms in this section. Recall that the standard version of the MFC solver Algorithm \ref{alg:MFC} is presented in the main text. We present the regularized MFC solver for LQ problem Algorithm \ref{alg:MFC_HJB}; the regularized MFC solver for the flow matching of OU process Algorithm \ref{alg:MFC_OU}; and the multi-stage splicing method Algorithm \ref{alg:MFC_splicing}.

\begin{algorithm}[htbp]
\caption{Regularized score-based normalizing flow solver for the MFC problem}\label{alg:MFC_HJB}
\begin{algorithmic}
\Require MFC problem \eqref{eq:MFC_LQ} \eqref{eq:MFC_continuity}, $N_t$, $N_z$, learning rate, weight parameter $\lambda$, number of iterations
\Ensure the solution to the MFC problem
\State Initialize $\theta$
\For{$\text{index}=1$ \textbf{to} $\text{index}_{\text{end}}$}
\State Sample $N_z$ points $\{z_0^{(n)}\}_{n=1}^{N_z}$ from the initial distribution $\rho_0$
\State Compute $\tl_0^{(n)} = \rho(0,z_0^{(n)})$, $s_0^{(n)} = \nz \log \rho(0,z_0^{(n)})$, $H_0^{(n)} = \nz^2 \log \rho(0,z_0^{(n)})$
\State Initialize losses $\mL_{\text{cost}} = 0$ and $\mL_{\text{HJB}} = 0$
\For{$j=0$ \textbf{to} $N_t-1$}
\State update loss $\displaystyle \mL_{\text{cost}} \pluseq \dfrac{1}{N_z} \sum_{n=1}^{N_z} \frac12 \abs{f(t_j,z_{t_j}^{(n)};\theta) + \gamma s_{t_j}^{(n)} }^2 \Delta t$
\State compute $\parentheses{\nz, \nz\cdot} f(t_j,z_{t_j}^{(n)};\theta)$ through auto-differentiation
\If{$j>=1$}
\State Compute $\mL_{\text{HJB}t_j}$ through \eqref{eq:loss_regtj}
\State $\mL_{\text{HJB}} \pluseq \abs{\mL_{\text{HJB}t_j}}$
\EndIf
\State compute $z_{t_{j+1}}^{(n)}$, $\tl_{t_{j+1}}^{(n)}$, $s_{t_{j+1}}^{(n)}$, $H_{t_{j+1}}^{(n)}$ through the forward Euler scheme \eqref{eq:zt_discrete}, \eqref{eq:Lt_discrete}, \eqref{eq:st_discrete}, and \eqref{eq:Ht_discrete}
\EndFor
\State add terminal cost $\displaystyle \mL_{\text{cost}} \pluseq \frac{1}{N_z} \sum_{n=1}^{N_z} G(z_{t_{N_t}}^{(n)}, \tl_{t_{N_t}}^{(n)})$
\State Total loss $\mL_{\text{total}} = \mL_{\text{cost}} + \lambda \mL_{\text{HJB}}$
\State update the parameters $\theta$ through Adam method to minimize the loss $\mL_{\text{total}}$
\EndFor
\end{algorithmic}
\end{algorithm}

\begin{algorithm}[htbp]
\caption{Regularized score-based normalizing flow solver for flow matching of OU process}\label{alg:MFC_OU}
\begin{algorithmic}
\Require MFC problem \eqref{eq:FM_cost} \eqref{eq:FM_FP}, $N_t$, $N_z$, learning rate, weight parameter $\lambda$, number of iterations
\Ensure the solution to the MFC problem
\State Initialize $\theta$
\For{$\text{index}=1$ \textbf{to} $\text{index}_{\text{end}}$}
\State Sample $N_z$ points $\{z_0^{(n)}\}_{n=1}^{N_z}$ from the initial distribution $\rho_0$
\State Compute $s_0^{(n)} = \nz \log \rho(0,z_0^{(n)})$, $H_0^{(n)} = \nz^2 \log \rho(0,z_0^{(n)})$
\State Initialize losses $\mL_{\text{cost}} = 0$ and $\mL_{\text{HJB}} = 0$
\For{$j=0$ \textbf{to} $N_t-1$}
\State update loss $\displaystyle \mL_{\text{cost}} \pluseq \dfrac{1}{N_z} \sum_{n=1}^{N_z} \frac12 \abs{f(t_j,z_{t_j}^{(n)};\theta) + \gamma s_{t_j}^{(n)} - b(t_j,z_{t_j}^{(n)})}^2 \Delta t$
\State compute $\parentheses{\nz, \nz\cdot} f(t_j,z_{t_j}^{(n)};\theta)$ through auto-differentiation
\If{$j>=1$}
\State Compute $\mL_{\text{HJB}t_j}$ through \eqref{eq:loss_regtj_FM}
\State $\mL_{\text{HJB}} \pluseq \abs{\mL_{\text{HJB}t_j}}$
\EndIf
\State compute $z_{t_{j+1}}^{(n)}$ $s_{t_{j+1}}^{(n)}$, $H_{t_{j+1}}^{(n)}$ through the forward Euler scheme \eqref{eq:zt_discrete}, \eqref{eq:st_discrete}, and \eqref{eq:Ht_discrete}
\EndFor
\State Total loss $\mL_{\text{total}} = \mL_{\text{cost}} + \lambda \mL_{\text{HJB}}$
\State update the parameters $\theta$ through Adam method to minimize the loss $\mL_{\text{total}}$
\EndFor
\end{algorithmic}
\end{algorithm}

\begin{algorithm}[htbp]
\caption{Multi-stage splicing algorithm for MFC problems}\label{alg:MFC_splicing}
\begin{algorithmic}
\Require MFC problem \eqref{eq:MFC} \eqref{eq:FokkerPlanck}, $N_t$, $N_z$, network structure \eqref{eq:f_NN}, learning rate, number of iterations, partition of the total time interval $\{I_m\}_{m=1}^{N_{\text{stage}}}$
\Ensure the solution to the MFC problem
\State Initialize $\theta_0$ for the neural network
\State Apply Algorithm \ref{alg:MFC} within the first interval $I_1$ \Comment{first stage training}
\State Save parameter of the network $\theta_1$ and the terminal states $\mS_1 = \{ z_{t_{N_t}}^{(n)} \}_{n=1}^{N_z}$ for the next stage
\For{$m=2$ \textbf{to} $N_{\text{stage}}$} \Comment{follow-up stages training}
\State Load parameter $\theta_{m-1}$ for the network from the previous stage
\State Apply Algorithm \ref{alg:MFC} within the interval $I_m$, but with a fixed set of initialization points $\mS_{m-1}$ from the previous stage
\State Save the parameter $\theta_m$ for the network and the terminal states $\mS_m = \{ z_{t_{N_t}}^{(n)} \}_{n=1}^{N_z}$ for the next stage
\EndFor
\end{algorithmic}
\end{algorithm}


\subsection{The errors in the numerical examples}\label{sec:errors}
\textbf{The errors for the standard Algorithm \ref{alg:MFC}.} For examples with exact solutions, we present the errors for the density, the velocity field, and the score function. These numerical expressions are given by
\begin{equation}\label{eq:three_errors}
\begin{aligned}
\text{err}_{\rho} &= \dfrac{1}{N_z} \sum_{n=1}^{N_z} \abs{\tl^{(n)}_{t_{N_t}} - \rho(t_{N_t},z^{(n)}_{t_{N_t}})}\,, \\
\text{err}_{f} &= \dfrac{1}{N_z}  \dfrac{1}{N_t+1} \sum_{n=1}^{N_z} \sum_{j=0}^{N_t} \abs{f\parentheses{t_j, z^{(n)}_{t_j};\theta} - f\parentheses{t_j, z^{(n)}_{t_j}}}\,,\\
\text{err}_{s} &= \dfrac{1}{N_z} \sum_{n=1}^{N_z} \abs{ s^{(n)}_{t_{N_t}} - \nz \log \rho\parentheses{t_{N_t}, z^{(n)}_{t_{N_t}}}}\,.
\end{aligned}
\end{equation}
The errors we present in the tables in section \ref{sec:numerical_results} are the average errors over $10$ independent runs. Note that there are two sources that contribute to these three errors $\text{err}_{\rho}$, $\text{err}_{f}$, and $\text{err}_{s}$. We take the terminal time $t_{t_N} = t_{\text{end}}$ as an example. The first source is the discretization error for the state dynamic, characterized by $z_{t_{\text{end}}} - z_{t_{N_t}}$, where $z_{t_{\text{end}}}$ is the end point of the continuous dynamic \eqref{eq:zt_dynamic} and $z_{t_{N_t}}$ is its time discretization. The second source comes from the training error of the neural network $f(\cdot,\cdot;\theta)$. When a function is steep, especially the score function $\nabla \log \rho$ in a non-Gaussian example, a small discretization error could be magnified by a large factor due to this large condition number. This is the reason why the score errors for the double well potential function example in Section \ref{sec:doublewell} are larger compared with other results.

\textbf{The errors for the double well potential example.}
For the double well potential example, it is hard to obtain a reference solution for $t \in (0,t_{\text{end}})$ (see Section \ref{sec:doublewell}). As an alternative, we compute the error of the velocity field at initial and terminal time $t=0,t_{\text{end}}$ through
\begin{equation}\label{eq:err_f0T}
\begin{aligned}
\text{err}_{f_0} &= \dfrac{1}{N_z} \sum_{n=1}^{N_z} \abs{f\parentheses{0, z^{(n)}_{0};\theta} - f\parentheses{0, z^{(n)}_{0}}}\,,\\
\text{err}_{f_{end}} &= \dfrac{1}{N_z} \sum_{n=1}^{N_z} \abs{ f\parentheses{t_{N_t},z^{(n)}_{t_{N_t}};\theta} - f\parentheses{t_{N_t},z^{(n)}_{t_{N_t}}}}\,.
\end{aligned}
\end{equation}

\textbf{The errors for the regularized Algorithm \ref{alg:MFC_HJB} and \ref{alg:MFC_OU}.} For modified algorithms with HJB regularizers, including Algorithm \ref{alg:MFC_HJB} and \ref{alg:MFC_OU}, we compute the error of the parameter $A$ and $B$ directly, given by
\begin{align}
\text{err}_A &= \dfrac{1}{N_t+1} \sum_{j=0}^{N_t} \abs{\theta^A_j - A(t_j)}\,, \\
\text{err}_B &= \dfrac{1}{N_t+1} \sum_{j=0}^{N_t} \abs{\theta^B_j - B(t_j)}\,,
\end{align}
where $A(\cdot)$ and $B(\cdot)$ denote the true solutions. Here, we do not present the error of $\theta^C$ for two reasons. First, according to \eqref{eq:f_LQ} and \eqref{eq:f_FM}, the velocity field does not depend on $\theta^C$. Second, we observe from \eqref{eq:loss_regtj} and \eqref{eq:loss_regtj_FM} that a constant shift of $\theta^C$ does not affect the regularization loss. Looking more carefully at \eqref{eq:pt_psi}, a constant shift of all the even (or odd) indices of $\theta^C$ also does not change the loss \eqref{eq:loss_regtj} or \eqref{eq:loss_regtj_FM}. Therefore, it is not meaningful to compute the error for $\theta^C$.

\subsection{Hyperparameters for the numerical examples}\label{sec:hyperparameters}
We present all the hyperparameters of the numerical tests in this section.

We set the diffusion coefficient $\gamma=1$ for all problems, except for the double well potential example, where $\gamma = 0.1$. The reason for this difference is because if we set $\gamma=1$ in the double well example, the terminal distribution $\rho(1,\cdot)$ will be very flat and we will not have a clear bifurcation pattern. In the LQ example in Section \ref{sec:LQ}, we set $\beta = 0.1$.

For all the numerical tests, we use a step size $\Delta t = 0.01$. The terminal time $t_{\text{end}} = 1$ for all the tests except for the double moon example. In the double moon example, we have a multi-stage splicing algorithm, with the first stage being $t \in [0,0.2]$ and the second stage being $t \in [0.2,0.4]$.

For the neural network parametrization \eqref{eq:f_NN}, we use $\tanh$ as the activation function. The widths of the network are $20$, $50$, and $200$ in $1$, $2$, and $10$ dimensions for the RWPO and LQ problems. For the double moon flow matching problem (in $2$ dimensions), the width is $200$. For the double well example, the widths are $50$, $100$, and $200$ in $1$, $2$, and $10$ dimensions. The double moon flow matching problem and the double well problem are harder due to the bifurcation pattern.

For the regularized algorithms (Algorithm \ref{alg:MFC_HJB} and \ref{alg:MFC_OU}), we set the weight parameter for the HJB loss as $\lambda=\num{1e-3}$.

For all the examples, we train the model using the built-in Adam optimizer in Pytorch, with a learning rate $0.01$. The number of training steps is set such that the training loss is roughly stable. The number of training steps are $200$, $200$, and $300$ in $1$, $2$, and $10$ dimensions for both RWPO and LQ problems with Algorithm \ref{alg:MFC}, regularized WPO with Algorithm \ref{alg:MFC_HJB}, flow matching for OU process with Algorithm \ref{alg:MFC_OU}. The number of steps for flow matching double moon example and the double well example in $1$, $2$, and $10$ dimensions, the number of training steps is $500$.

\section{remarks}\label{sec:remarks}
We make a few remarks in this section.

\medskip
\noindent
\textbf{The score dynamic \eqref{eq:st_dynamic}.} \cite{boffi2023probability} proposed a similar equation (in equation (13)) before. The higher order equations for the score function are also proposed in \cite{shen2022self}. But we have concrete numerical examples. We believe that we are the first to apply the ODE dynamic for the second-order score function \eqref{eq:Ht_dynamic} in practice. This equation significantly enhances the accuracy of the regularized MFC solver, as demonstrated in Table \ref{tab:WPOregFD} and \ref{tab:FMregFD}.

\medskip
\noindent
\textbf{Parametrization of neural network.} In this paper, we parametrize $f$ as a neural network with a  single (hidden) layer. As a consequence, the whole map from $z_0$ to $z_{t_{\text{end}}}$ is slightly different from the typical structure of a deep neural network, as described in \cite{chen2018neural}. The layer structure for $f$ is affine-activation-affine, so the composition of multiple layers becomes affine-activation-affine-affine-activation-affine. In contrast, a typical structure for a deep neural network is affine-activation-affine-activation-affine. The composition of two affine functions is still an affine function, so it is not a big difference.

\medskip
\noindent
\textbf{Related work on theoretical MFC.} From a theoretical perspective, convergence properties of MFC problems have been extensively studied in \cite{carmona2021convergence, carmona2022convergence}. In addition, \cite{lacker2017limit} investigated the limit behavior as the number of agents approaches infinity. The maximum principle, a crucial concept in optimal control, has also been extended to the mean field setting in \cite{meyer2012mean}, offering an understanding of the theoretical underpinnings of MFC problems. The mean field game (MFG) problem \citep{cardaliaguet2010notes} is closely related to MFC. \cite{cardaliaguet2019master} generalize the HJB equation to the master equation, whose properties have been further investigated in \cite{gangbo2022global,gangbo2022mean}. \cite{bayraktar2024convergence} study the convergence of particle system on Wasserstein space.

\medskip
\noindent
\textbf{HJB regularization for MFC solver.}
For the HJB regularization technique in \ref{sec:regularizer}, we used absolute value instead of a squared residual in \eqref{eq:loss_HJB} because it performs slightly better. In fact, both forms of regularization could significantly reduce errors.

As for the reason why it reduces errors, we recall that we minimize the discretized loss function in the original Algorithm \ref{alg:MFC}. As a consequence, there is a trend of overfitting to the discretized loss. Adding a regularizer could resolve this overfitting issue. Such issues for discretizations are also discussed in \cite{zhou2024solving}.

\medskip
\noindent
\textbf{Computational cost for the score function via \eqref{eq:score_MA} and \eqref{eq:score_dldz}.}
When we compute the score function through \eqref{eq:score_MA}, the term $\nx\log\parentheses{\det(\nx T(t,x))}$ could be tricky. Note that $\nx\log\parentheses{\det(\nx T(t,x))} = \nx\det(\nx T(t,x)) / \det(\nx T(t,x))$, and the $i$-th component of $\nx\det(\nx T(t,x))$ can be computed through
$$\partial_{x_i} \det(\nx T(t,x)) = \det(\nx T(t,x)) \Tr( \nx T(t,x)^{-1} \, \partial_{x_i} \nx T(t,x)).$$
Therefore, the $i$-th component of $\nx\log\parentheses{\det(\nx T(t,x))}$ in \eqref{eq:score_MA} is
$$\Tr( \nx T(t,x)^{-1} \, \partial_{x_i} \nx T(t,x))\,.$$ 
As a consequence, the cost for computing one single score function $s_t$ through \eqref{eq:score_MA} is either from computing the inverse of Jacobi matrix or solving the related linear system for each dimension. Classical numerical linear algebra methods suggest that the complexity is $\mO(d^3)$, and the cost for one score trajectory $\{s_{t_j}\}_{j=0}^{N_t}$ is $\mO(N_t \, d^3)$.

We can also compute the score function through \eqref{eq:score_dldz}, where we assume that the width of the neural network is $\mO(d)$, i.e., proportional the dimension. In this way, $\dfrac{\partial z_t}{\partial z_0}$ and $\dfrac{\partial l_t}{\partial z_0}$ are computed from auto-differentiations of the deep neural network functions in \eqref{eq:deepNN}. The computational cost for a single score $s_{t_j}$ is $\mO(j \, d^3)$, because we need to compute the back propagation of the deep neural network \eqref{eq:deepNN} with depth $j$ using chain rule. However, the total cost for computing a whole trajectory $\{s_{t_j}\}_{j=0}^{N_t}$ is $\mO(N_t \, d^3)$ rather than $\mO(N_t^2 \, d^3)$ because we can efficiently store intermediate computations. For example, according to \eqref{eq:zt_discrete}, we can compute
\begin{equation}
\dfrac{\partial z_{t_{j+1}}}{\partial z_0} = \dfrac{\partial z_{t_{j+1}}}{\partial z_{t_j}} \dfrac{\partial z_{t_j}}{\partial z_0} = \parentheses{I_d + \Delta t \, \nz f(t_j,z_{t_j};\theta)} \dfrac{\partial z_{t_j}}{\partial z_0}
\end{equation}
with additional $\mO(d^2)$ operations if $\dfrac{\partial z_{t_j}}{\partial z_0}$ is stored. Similarly, by \eqref{eq:lt_discrete}, we can compute 
\begin{equation}
\dfrac{\partial l_{t_{j+1}}}{\partial z_0} = \dfrac{\partial l_{t_j}}{\partial z_0} - \Delta t \parentheses{\dfrac{\partial z_{t_j}}{\partial z_0}}\tp \nz \parentheses{\nz\cdot f(t_j,z_{t_j};\theta)}
\end{equation}
with additional $\mO(d^2)$ operations provided that $\dfrac{\partial l_{t_j}}{\partial z_0}$ and $\dfrac{\partial z_{t_j}}{\partial z_0}$ are stored. Note that computing the inverse of the Jacobian matrix or solving the related linear systems requires $\mO(d^3)$ operations following classical numerical linear algebra methods. Therefore, computing $s_{t_{j+1}}$ through \eqref{eq:score_dldz} requires additional $\mO(d^3)$ operations if we smartly record the results at $t_j$. Consequently, the total cost for computing a whole trajectory $\{s_{t_j}\}_{j=0}^{N_t}$ is $\mO(N_t \, d^3)$. We also remark that \cite{huang2024efficient} give a derivation similar to \eqref{eq:score_dldz}.

\medskip
\noindent
\textbf{Other methods to compute the score function.} For the \textbf{KDE} method, an approximation for the density function is obtained through taking the kernel convolution with samples directly. Then, one compute its spatial derivative and obtain the score function via $\nx \log \rho(t,x) = \nx \rho(t,x) / \rho(t,x)$. However, the choice of kernel and bandwidth could be tricky, and the numerical error could be large, especially in high dimensions \citep{wkeglarczyk2018kernel}. \textbf{Score matching} \citep{hyvarinen2005estimation} is another commonly used method. With a small trick of integration by part and dropping a constant term that includes the integral with squared score functions, one only requires samples to train a network for the score function. However, training a neural network could be expensive, compared with the KDE method, which evaluates the score function directly.

\medskip
\noindent
\textbf{Regularity for the MFC problem.} 
Formula \eqref{eq:joint_ODE} requires third order derivatives for the velocity field $f$. As a consequence, we need regularity assumption for the MFC problem to guarantee that the ODE system is meaningful. In this work, we assume that $L$ and $F$ are $C^{1,2,2}$ H\"older continuous, and $G$ is $C^{2,2}$ H\"older continuous. According to Schauder estimates \citep[Chapter 4]{ladyzhenskaia1968linear}, these regularity assumptions guarantee that the velocity field has up to third-order derivatives, ensuring that the ODE system \eqref{eq:joint_ODE} is meaningful.

\medskip
\noindent
\textbf{The multi-stage splicing method.} For our multi-stage splicing method (cf. Section \ref{sec:flow_matching}), we partition the total time span evenly and pass the trained neural network from the previous stage into the next stage. When the dynamic becomes complicated, the length of the time interval could also be adapted to suit the problem \citep{zhou2021actor}.

Also, the neural network from the previous stage is probably not an accurate velocity for the new stage, but it does contain more information than a random initialization. Therefore, this nice initialization could serve as a warm-start \citep{zhou2023neural} for the training in the new stage, which could accelerate the convergence of this algorithm.

\medskip
\noindent
\textbf{Different numerical schemes for time discretizations.} In the numerical tests, we have also tried other schemes besides the forward Euler scheme, such as the implicit Euler scheme, the mid-point scheme, and the Runge-Kutta scheme. The numerical results for different schemes are similar. We believe that the time discretization error is not the bottleneck of our algorithm at this moment. It is probably more important to train the neural network properly.

\bibliographystyle{elsarticle-num} 
\bibliography{ref}






\end{document}